\documentclass{article}
\usepackage[utf8]{inputenc}
\usepackage{amsmath,amssymb,amsthm}
\usepackage{mathtools}
\usepackage{enumerate}
\usepackage{tikz}
\usetikzlibrary{arrows.meta,calc}
\usepackage{lipsum}

\usepackage{graphicx} 
\usepackage{tikz-3dplot}
\usepackage{comment}

\usepackage{xcolor}
\definecolor{darkgreen}{rgb}{0,0.7,0}

\usepackage{xcolor}
\usepackage{etoolbox}
\usepackage[
  colorlinks=true,
  linkcolor=darkgreen,
  citecolor=darkgreen,
  urlcolor=black
]{hyperref}

\makeatletter
\patchcmd{\tableofcontents}
  {\@starttoc{toc}}
  {\begingroup\hypersetup{linkcolor=black}\@starttoc{toc}\endgroup}
  {}{}
\makeatother

\newtheorem{theorem}{Theorem}[section]
\newtheorem{lemma}[theorem]{Lemma}
\newtheorem{definition}[theorem]{Definition}

\newtheorem{notation}[theorem]{Notation}
\newtheorem{remark}[theorem]{Remark}
\newtheorem{corollary}[theorem]{Corollary}
\newtheorem{theorem/definition}[theorem]{Satz/Definition}
\newtheorem{proposition}[theorem]{Proposition}

\newcommand{\R}{\mathbb R}

\newcommand{\N}{\mathbb{N}}

\newcommand{\C}{{\cal C}}

\newcommand{\op}{\operatorname}
\newcommand{\supp}{\operatorname{supp}}

\newcommand{\ep}{\varepsilon}

\newcommand{\overbar}[1]{\mkern 1.5mu\overline{\mkern-1.5mu#1\mkern-1.5mu}\mkern 1.5mu}

\numberwithin{equation}{section}

\begin{document}

\title{A Lorentzian Lasry-Lions regularization theorem}
\author{{\sc Alec Metsch$^{1}$} \\[2ex]
      $^{1}$ Universit\"at zu K\"oln, Institut f\"ur Mathematik, Weyertal 86-90, \\
      D\,-\,50931 K\"oln, Germany \\
      email: ametsch@math.uni-koeln.de \\[1ex]
      {\bf Key words:} Lorentzian geometry, Lasry-Lions regularization,\\[1ex]
      Optimal transport, regularity of Kantorovich potentials, \\[1ex]weak KAM methods\\[1ex]
      {\bf MSC Classification:} 49N60, 49J30, 49Q22, 49Q20, 53C50}

\maketitle

\begin{abstract}
    \noindent
    The main goal of this paper is to establish a general Lorentzian Lasry-Lions regularization theorem: let $u$ be a function defined on a globally hyperbolic spacetime. Assume that its forward Lax–Oleinik evolution $Tu$ is locally semiconcave in a neighbourhood of $(t_0,y_0)$ and has only future-directed timelike superdifferentials at $(t_0,y_0)$. Then, for $t$ close to $t_0$ and $s>0$ sufficiently small, the function $\hat T_s\circ T_tu$ is of class $C_{\mathrm{loc}}^{1,1}$ in a neighbourhood of $y_0$. We provide sufficient conditions ensuring that the assumptions of the theorem are satisfied, and we present an application to optimal transport: under farily general assumptions, for any two intermediate measures along a displacement interpolation, there exists a $C^{1,1}_{loc}$-regular maximizing pair in the dual Kantorovich formulation. 
\end{abstract}

\tableofcontents

\section{Introduction}

If $H$ is a Hilbert space with induced norm $|\cdot|$ and $u:H\to \R$ is any function, the forward and backward Lax-Oleinik semigroups are defined as
\begin{align}
    T_tu(y):=\inf_{x\in H} \bigl(u(x)+\frac 1{2t} |x-y|^2\bigr), \quad \hat T_tu(x) := \sup_{y\in H} \bigl(u(y)-\frac 1{2t} |x-y|^2\bigr), \label{eqzr}
\end{align}
with $t>0$. If $u:H\to \R$ is bounded, Lasry and Lions \cite{Lasry/Lions} proved that for any $0<s<t$ the functions 
\begin{align}
    \hat T_s \circ T_t u \quad \text{ and } \quad T_s\circ \hat T_t u \label{eqzs}
\end{align}
belong to $C^{1,1}(H)$, and converge to $u$ as $t\to 0$ if $u$ is uniformly continuous. The result was motivated by the problem of finding smooth approximations in infinite dimensional spaces. See also \cite{Bernard2}.

Notice that $\frac{1}{2t}|x-y|^2$ is just the minimal action to go from $x$ to $y$ in time $t$:
\[
    \frac{1}{2t}|x-y|^2 = \inf\Bigl\{ \int_0^t \frac{|\dot \gamma_s|^2}{2}\, ds\mid \gamma:[0,t]\to H \text{ smooth },\ \gamma_0=x,\ \gamma_t = y\Bigr\}.
\]
For this reason, the semigroups \eqref{eqzr} make sense whenever $u$ is a function on a smooth manifold $M$ admitting a suitably well-behaved Lagrangian $L:TM\to \R\cup\{\infty\}$. Here, $\frac{1}{2t}|x-y|^2$ is replaced by the minimal action
\[
    h_t(x,y):=\inf\Bigl\{ \int_0^t L(\gamma_s,\dot \gamma_s)\, ds\mid \gamma:[0,t]\to M \text{ smooth },\ \gamma_0=x,\ \gamma_t = y\Bigr\}.
\]

A classical example is when $L$ is a Tonelli-Lagrangian on a complete Riemannian manifold $(M,g)$; roughly speaking, this means that $L$ is $C^2$ and enjoys several convexity and growth conditions. In this case, $h_t$ turns out to be real-valued and continuous \cite{Fathi/Figalli}. If $M$ is also compact, Bernard \cite{Bernard} was able to extend the result of Lasry-Lions: If $u:M\to \R$ is a bounded function, he proved that the functions in \eqref{eqzs} belong to $C^{1,1}(M)$ for $t>0$ and $s\leq s(t)$ sufficiently small. The motivation for Bernard’s result was to establish the existence of $C^{1,1}$ critical (viscosity) subsolutions to the stationary Hamilton–Jacobi equation, which is indeed a consequence of his result.

Bernard’s Lasry–Lions regularization was subsequently extended and generalized to the non-compact case in \cite{Cannarsa/Cheng/Fathi,Fathi/Figalli/Rifford,FathiHJ}: Indeed, Claim 4.7 in \cite{Cannarsa/Cheng/Fathi} (which relies on Theorem 1 of \cite{FathiHJ} and Lemma B.7 in \cite{Fathi/Figalli/Rifford}) states the following: If $V\Subset M$ is an open precompact set, $u:M\to \R\cup\{\pm\infty\}$ is any function and $t_0>0$ is such that $T_{t_0}(y_0)$ is finite at some point $y_0$, then $\hat T_s\circ T_tu$ belongs to $C_{loc}^{1,1}(V)$ for $t\in I\Subset (0,t_0)$ and $s>0$ sufficiently small.

In the special case $L(x,v)=\frac{|v|_g^2}{2}$, it was shown in \cite{Cannarsa/Cheng/Fathi} that the $C_{loc}^{1,1}$-property of the functions $\hat T_s\circ T_tu$ can be used to study topological properties of both the cut locus and the set of non-differentiability points of the Riemannian distance function – it was shown that both sets are locally contractible.

Motivated by these results, the main goal of this paper is to establish a Lorentzian analogue of the Lasry–Lions-type regularization theorem in the setting of a globally hyperbolic spacetime. We study the Lax-Oleinik semigroups w.r.t.\ the standard Lorentzian Lagrangian \eqref{eqzw} which gives rise – via the minimal time-$1$-action – to the standard cost function  (some version of the Lorentzian time separation (\eqref{eqzx} with $t=1$)) in the Lorentzian optimal transport problem \eqref{eqzy}. After proving the Lasry–Lions-type regularity result, we can therefore provide an application to optimal transport: under suitable assumptions, we show that the dual Kantorovich problem for intermediate times along a displacement interpolation (cf. Definition \ref{defa}) admits a solution given by $C^{1,1}_{loc}$-functions (the analogous result also holds in the Riemannian framework, although I am not aware of any reference).

It is worth mentioning that our main theorem yields the same topological properties for the future causal cut locus and the singularities of the Lorentzian distance function as in the Riemannian case. This result, however, was already established in an earlier work \cite{Metsch3}, where a corresponding special case of our main theorem was treated.

\subsection{Setting and results}

To state our precise results, let $(M,g)$ be a globally hyperbolic spacetime, and fix $0<p< 1$.

We consider the Lagrangian $L:TM\to \R\cup\{+\infty\}$ by
\begin{align}
    L(x,v):=
    \begin{cases}
        -|v|^{p}_g, & \text{ if $v$ is future causal},
        \\[10pt]
        +\infty,& \text{otherwise,}
    \end{cases} \label{eqzw}
\end{align}
where $|v|_g:=\sqrt{|g(v,v)|}$.
For $t>0$, the Lagrangian gives rise to the minimal time-$t$-action $c_t$ via
\begin{align*}
	c_t(x,y) :=\inf\Bigl\{\int_0^t L(\gamma_s,\dot \gamma_s)\, ds\mid \gamma:[0,t] \to M\text{ is piecewise smooth, } \gamma_0=x, \gamma_t= y\Bigr\}.
\end{align*}
If $\ell$ denotes the time separation function \eqref{eqzz}, it is not difficult to show that
\begin{align}
	c_t(x,y)=
	\begin{cases}
	-t^{1-p} \ell(x,y)^{p}, &\text{ if } t>0 \text{ and }y\in J^+(x),
	\\[10pt]
	+\infty, & \text{ otherwise.} \label{eqzx}
	\end{cases}
\end{align}
where $J^+(x)$ denotes the causal future of $x$ (note that, with our convention, $0$ is a future causal vector). As in the Riemannian case, we can define the forward and backward Lax-Oleinik/Hopf-Lax semigroups $Tu,\hat Tu:(0,\infty)\times M\to \overbar \R:=\R\cup\{\pm \infty\}$ of a function $u:M\to \overbar \R$ via
\begin{align*}
    T_tu(y):=\inf_{x\in J^-(y)} (u(x)+c_t(x,y))
    \quad \text{ and } \quad
    \hat T_tu(x) := \sup_{y\in J^+(x)} (u(y)-c_t(x,y)).
\end{align*}
We can now state our main theorem. Recall that a function defined on a smooth manifold is called locally semiconcave if it is so in local coordinates, see Definition \ref{semiconvexity}. The set of super-differentials of a function $u$ at $x$ is denoted by $\partial^+u(x)$, and $\C^*_y\subseteq T_yM^*$ denotes the dual future causal cone (cf.\ Definition \ref{causalco}).

\begin{theorem}[Lasry-Lions regularization]\label{main}
    Let $N$ be a smooth manifold, and let $u:N\times M\to \R\cup\{\pm\infty\},\ (x,y)\mapsto u(x,y)=u_x(y)$, be such that the following family of Lax-Oleinik functions
    \begin{align*}
        \hat u:(0,\infty)\times N\times M\to \R\cup\{\pm \infty\},\ (t,x,y)\mapsto (T_tu_x)(y), 
    \end{align*}
    is locally semiconcave on an open neighbourhood of $(t_0,x_0,y_0)$ with
    \begin{align}
        \partial^+(T_{t_0}u_{x_0})(y_0)\Subset \op{int}(\C_{y_0}^*). \label{eqzv}
    \end{align}
    Then there exist three open neighbourhoods $I$, $U$ and $V$ of $t_0$, $x_0$ and $y_0$, respectively, and some number $s_0>0$ such that the following properties hold:
    \begin{enumerate}[(i)]

    \item The functions $\hat T_s\circ T_tu_x$, $(s,t,x)\in (0,s_0]\times I\times U$, are $C^{1,1}_{loc}$ on $V$.
    \item For $(s,t,x,y)\in (0,s_0]\times I\times U\times V$, there exists a unique $z\in M$ with $\hat T_s\circ T_tu_x(y)=T_tu_x(z)-c_s(y,z)$, and necessarily $\ell(y,z)>0$. Moreover, setting $z=y$ if $s=0$, $z$ depends continuously on $(s,t,x,y)$.
    \end{enumerate}
\end{theorem}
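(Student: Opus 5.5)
The plan is to follow the Bernard / Fathi--Figalli--Rifford scheme: write $\hat T_s\circ T_tu_x$ as a supremum of functions that are uniformly semiconcave near $y_0$, and then show it is also locally semiconvex. Semiconcavity plus semiconvexity gives $C^{1,1}_{loc}$.

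Write $w:=T_tu_x$ and $\phi:=\hat T_s w=\sup_z\bigl(w(z)-c_s(\cdot,z)\bigr)$. The argument proceeds in four steps.

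\textbf{Step 1: localization of maximizers.} For $(s,t,x,y)$ near $(0,t_0,x_0,y_0)$ we show that the supremum defining $\phi(y)$ is attained, at some $z$ with $\ell(y,z)>0$ and $z\to y$ as $s\to0$. Finiteness and local semiconcavity of $\hat u$ give a uniform local upper bound near $y_0$, namely
\[
w(z)\le w(y)+\langle p,\exp_y^{-1}z\rangle+C d(y,z)^2 \quad \text{for some } p\in\partial^+w(y).
\]
By \eqref{eqzv} and upper semicontinuity of superdifferentials of semiconcave functions, the covectors $p$ stay in a compact subset of $\op{int}(\C^*_y)$ uniformly nearby. We compare with $c_s(y,z)=-s^{1-p}\ell(y,z)^p$. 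For small $s$ and $z$ at distance $\ge\delta$, global hyperbolicity (compactness of causal diamonds) together with a lower bound on $w$ near $y_0$ shows that candidates far away are worse. Local maximizers then satisfy a first-order condition: the gradient of $-c_s(\cdot,z)$ at $z$ lies in $\partial^+w(z)$ up to sign conventions. Since $p\in\op{int}\C^*$, the Legendre transform of the Lorentzian Lagrangian gives $\ell(y,z)>0$ and a timelike connecting geodesic of length $\sim s\,|p|^{1/(p-1)}$-type scaling, which tends to $0$ with $s$. In particular $z$ avoids the null cone and the timelike cut locus.

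\textbf{Step 2: semiconcavity of $\phi$.} Since $z$ ranges in a compact set strictly away from the lightcone and the cut locus of $y$, the functions $y\mapsto -c_s(y,z)$ are smooth with uniformly bounded $C^2$ norms there. A supremum of such a family is locally semiconcave, uniformly in the parameters $(s,t,x)$.

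\textbf{Step 3: semiconvexity of $\phi$.} This is the main obstacle. Fix $y$ with maximizer $z$ and let $\gamma$ be the timelike geodesic from $y$ to $z$ over time $s$. For nearby $y'$, use the semiconcavity of $w$ at $z$ with the test point $z'=z+(y'-y)$, transported along $\gamma$. This gives
\[
\phi(y')\ge w(z')-c_s(y',z')\ge \phi(y)+\text{(linear in } y'-y)-C_s|y'-y|^2 .
\]
Here we need $w(z')\ge w(z)+\langle q,z'-z\rangle-C|z'-z|^2$, which is a lower bound that semiconcavity does not supply directly. The trick of Bernard is instead to use the semigroup property. Choose $s$ small relative to the semiconcavity constant $C$ of $w$, so that $-c_s(\cdot,\cdot)$ has Hessian in $z$ dominating $C$; concretely, $c_s$ has Hessian of size $\sim 1/s$ in the transverse directions. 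Then $z\mapsto w(z)-c_s(y,z)$ is strictly concave near $z$. That gives uniqueness of the maximizer (part (ii)) and, by an implicit-function/stability argument on the first-order condition, Lipschitz (hence continuous) dependence of $z$ on $y$. Semiconvexity then follows from the standard estimate
\[
\phi(y+h)+\phi(y-h)-2\phi(y)\ge -C'|h|^2,
\]
obtained by plugging $z\pm h$ (in coordinates) into the sup. This uses only the two-sided $C^2$ bounds on $c_s$ together with semiconcavity of $w$ in the combination $w(z+h)+w(z-h)-2w(z)$. The point of care is that the large $1/s$ Hessian of $c_s$ only controls the time-like transverse directions. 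Uniform strong concavity of $-c_s$ in all directions near the diagonal (from $0<p<1$ and the timelike condition) must be checked, and I would verify it in normal coordinates at $y_0$.

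\textbf{Step 4: conclusion.} A function that is both semiconcave and semiconvex is $C^{1,1}_{loc}$, which gives (i). Continuity of $z$ in $(s,t,x,y)$, including $z\to y$ as $s\to 0$, follows from uniqueness together with compactness and the upper semicontinuity of $\hat u$.
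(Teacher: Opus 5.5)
\textbf{Main gap: the two halves are swapped, and semiconcavity is never proved.} The semiconcavity of $\hat T_s\circ T_tu_x$ is the real content of a Lasry--Lions/Bernard theorem, and your proposal does not establish it.

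In Step 2, a supremum of functions with uniform two-sided $C^2$ bounds is locally semiconvex, not semiconcave: think of $|y|=\max(y,-y)$. Moreover, the $C^2$ bounds of $y\mapsto c_s(y,z)$ blow up like $1/s$, so nothing there is uniform as $s\to 0$. What Step 2 actually yields is the easy half, semiconvexity for fixed $s$. This is exactly the paper's argument in Theorem \ref{lzb}, which is uniform only for $s\in[s_1,s_0]$ and requires first knowing that maximizers lie in a compact $K$ with $V\times K\subseteq I^+$.

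Step 3 then targets the easy half and also fails on its own terms. Plugging in $z\pm h$ gives $\phi(y+h)+\phi(y-h)-2\phi(y)\ge [w(z+h)+w(z-h)-2w(z)]-[\dots]$. Semiconcavity of $w$ bounds the first bracket only from \emph{above}, which is precisely the issue you flagged a few lines earlier.

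The paper obtains semiconcavity in Theorems \ref{thmd} and \ref{c2} as follows:
\begin{enumerate}[(1)]
\item By Lemma \ref{pro1}, write $f=T_tu_x$ locally as $\inf_i g_i$, where the $g_i$ are smooth, have equi-Lipschitz differentials, and realize every superdifferential of $f$.
\item For each $i$, show $(y,d_y\hat T_s^{V_0'}g_i)=\psi_{-s}\circ dg_i\circ\psi_{i,s}^{-1}(y)$, where $\psi_{i,s}(z)=\pi^*\psi_{-s}(z,d_zg_i)$ is bi-Lipschitz for small $s$.
\item Hence the functions $\hat T_s^{V_0'}g_i$ are uniformly semiconcave, and $\hat T_s^{V_0}f=\inf_i\hat T_s^{V_0'}g_i$ inherits this.
\end{enumerate}

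Your strong-concavity observation could be developed into a genuinely different route. If $z\mapsto w(z)-c_s(y,z)$ is strongly concave on a convex region containing all maximizers, you get uniqueness of $z(y)$ and Lipschitz dependence of $z(y)$ on $y$. Then, by the envelope formula, $d\phi(y)=-\partial_yc_s(y,z(y))$ is Lipschitz. However, you neither draw this conclusion nor prove the needed concavity. That concavity requires $\ell(y,z)\sim s$ with $\dot\gamma$ in a compact subcone of $\op{int}(\C)$.

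\textbf{Second gap: the localization in Step 1 is not justified.} Local semiconcavity of $\hat u$ says nothing about $w=T_tu_x$ far from $y_0$. To discard far candidates you need an \emph{upper} bound on $w(z)-c_s(y,z)$ there. Global hyperbolicity and a lower bound on $w$ near $y_0$ do not provide one.

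The paper's mechanism is the Claim in the proof of Theorem \ref{thmza}:
\begin{enumerate}[(1)]
\item For $z\notin V_0$, take $z'=\gamma_{s'}$ on a maximizing geodesic from $y$ to $z$ with $d_h(y,z')=Cs^{1-p}$.
\item The semigroup property gives $T_tu_x(z)-c_s(y,z)\le T_{t-s+s'}u_x(z')-c_{s'}(y,z')$.
\item Superdifferentials in $K_0\subseteq\op{int}(\C^*)$ force every $f\in\mathcal F$ to decrease at rate $C_0|\dot\gamma|_h$ along future causal curves (Lemma \ref{lk}). So the right-hand side is at most $T_{t-s+s'}u_x(y)-\frac{C_0C}{2}s^{1-p}$.
\item The Lipschitz dependence of $\hat u$ on $t$ costs only $Ls$, which is $\ll s^{1-p}$ because $p<1$.
\end{enumerate}
This is where joint semiconcavity in $t$ is needed; your proposal never uses the $t$-variable.

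Likewise, your first-order and scaling heuristic presupposes that $z$ already lies where $w$ is semiconcave. Finally, $\ell(y,z)>0$ does not come from the Legendre transform. It comes from comparing the local Lipschitz bound on $w$ with the $r^{p/2}$ gain obtained by bending a null geodesic into the timelike cone (Lemma \ref{lbb}(ii)).
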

A particular case is when $u_x=u$ does not depend on $x$. The dependence on $x$, however, becomes relevant when studying the topological properties of the time separation function \cite{Metsch3}, à la \cite{Cannarsa/Cheng/Fathi}; see Section \ref{discussion} for a discussion of this phenomenon and why it does not arise in the Riemannian case. We also refer to Section \ref{discussion} for a discussion of hypotheses ensuring local semiconcavity of $\hat u$ and \eqref{eqzv}, and why these two properties have no Riemannian counterpart. Notice that an analogous version of Theorem \ref{main} holds when the roles of the forward and backward semigroups are interchanged.
\medskip

Our second main result is an application of the theorem to the optimal transport problem. Before stating the precise result, we introduce the necessary background and basic concepts.

We consider the optimal transport problem on $M$ associated with the cost function $c_1$.
\begin{align}
    C(\mu,\nu)=\inf_{\pi\in \Pi_{\leq}(\mu,\nu)}\int_{M\times M} c_1(x,y)\, d\pi(x,y)\in [-\infty,0]\cup \{\infty\}, \label{eqzy}
\end{align}
where $\mu,\nu\in {\cal P}:={\cal P}(M)$ are Borel probability measures on $M$,  $\Pi_{\leq}(\mu,\nu)$ denotes the set of causal couplings of $\mu$ and $\nu$ (i.e.\ all Borel probability measures $\pi$ on $M\times M$ having $\mu$ and $\nu$ as marginals and such that $\pi(J^+)=1$). We adopt the convention that $\inf(\emptyset):=+\infty$.
This is the standard cost function for the Lorentzian optimal transport problem and it has been studied extensively (see e.g.\ \cite{Kell, McCann, Mondino/Suhr, Suhr, Braun} for the smooth and \cite{Braun, Mondino/Cavalletti, Octet} for the synthetic setting). Notice that \eqref{eqzy} is often stated as a maximization problem, cf.\ Section \ref{discussion}.

In the following fundamental definition, $\op{CGeo([0,1],M)}$ denotes the Polish space of maximizing future causal geodesics over $[0,1]$ (cf.\ Remark \ref{remf}), $e_t:\op{CGeo([0,1],M)}\to M$ the evaluation map at time $t$, and $_\#$ is used to denote the push-forward of a measure.

\begin{definition}[Dynamical optimal coupling]\label{defa}\rm
    We say that $\Pi\in \mathcal{P}(\op{CGeo([0,1],M)})$ is a \emph{dynamical optimal coupling} from $\mu_0:=(e_0)_\#\Pi$ to $\mu_1:=(e_1)_\#\Pi$ if $C(\mu_0,\mu_1)$ is finite and
    $(e_0,e_1)_\#\Pi\in \Pi_{\leq}(\mu_0,\mu_1)$ is an optimal coupling. $(\mu_t)$ is called \emph{displacement interpolation}.
\end{definition}

For a comparison to the notion of $p$-geodesics, as introduced by McCann \cite{McCann2}, see Section \ref{discussion}.
Using standard gluing methods, it is not difficult to prove that  $\pi_{s,t}:=(e_s,e_t)_\#\Pi$ constitutes an optimal coupling of $\mu_s$ and $\mu_t$ for all $0\leq s\leq t\leq 1$ (Remark 2.32 in \cite{Mondino/Cavalletti}).

\begin{definition}[Calibrated pair]\rm\label{def1231}
Let $\mu,\nu\in {\cal P}(M)$ such that $C(\mu,\nu)$ is finite, and suppose that $\pi\in \Pi_{\leq}(\mu,\nu)$ is an optimal coupling. We say that a pair of functions $\varphi,\psi:M\to \R\cup\{\pm\infty\}$ is \emph{$(c_1,\pi)$-calibrated} if 
    \begin{enumerate}[(a)]
        \item it is a $c_1$-\emph{subsolution}, i.e.\ $\psi(y)-\varphi(x) \leq c_1(x,y)$ for all $x,y\in M$ and
        \item $\psi(y)-\varphi(x) =c_1(x,y)\ (=-\ell(x,y)^p)\quad   \pi\text{-a.e.}$.
    \end{enumerate}
    Here, we use the convention $\pm \infty \mp \infty := -\infty$.
\end{definition}

\begin{theorem}[$C^{1,1}_{loc}$-optimal pairs]\label{thmg}
    Let $\mu_0,\mu_1\in {\cal P}(M)$ be such that $C(\mu_0,\mu_1)$ is finite, and let $\Pi$ be a dynamical optimal coupling with associated optimal coupling $\pi:=(e_0,e_1)_\#\Pi$. Let $(\varphi,\psi):M\to \overbar \R$ be a $(c_1,\pi)$-calibrated pair, and $0\leq s<t\leq 1$. Suppose that
    \begin{enumerate}[(i)]
        \item $\supp(\mu_0)$ or $\supp(\mu_1)$ is causally compact, i.e.\ 
        for all $K\subseteq M$ compact the intersections $J^\pm(\supp(\mu_i))\cap K$ are compact.
        \item $\Pi$ is concentrated on timelike geodesics.
    \end{enumerate}
    Then there exists two open sets $\Omega_s,\Omega_t\subseteq M$ with $\mu_s(\Omega_s)=1$, $\mu_t(\Omega_t)=1$, and a $(c_1,\pi_{s,t})$-calibrated pair $(\varphi_s,\psi_t)$ such that $\varphi_s\in C^{1,1}_{loc}(\Omega_s)$ and $\psi_t\in C^{1,1}_{loc}(\Omega_t)$.
    Moreover, if $(\varphi,\psi)$ is optimal in the dual Kantorovich formulation, so is $(\varphi_s,\psi_t)$.
\end{theorem}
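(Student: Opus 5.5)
Fix an intermediate time $r\in(s,t)$; the plan is to build the calibrated pair by Lax--Oleinik evolutions of $(\varphi,\psi)$ towards time $r$ and then regularize with Theorem \ref{main}. By the scaling in \eqref{eqzx}, $c_1$ between $\gamma_s$ and $\gamma_t$ along a maximizing geodesic corresponds to $c_{t-s}$ for the reparametrized problem. So we work with the times $\tau\in[0,1]$ directly and set
\[
\varphi^{r}:=T_r\varphi\ \ \text{(forward, from time $0$)},\qquad \psi^{r}:=\hat T_{1-r}\psi\ \ \text{(backward, from time $1$)}.
\]
Standard semigroup and calibration arguments give $\psi^r\le\varphi^r$ everywhere, with equality $\mu_r$-a.e.; the equality holds on $e_r(\operatorname{supp}\Pi)$ for $\Pi$-a.e.\ geodesic, using that $(\varphi,\psi)$ is calibrated and that geodesics minimize action. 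The candidate pair is then
\[
\varphi_s:=\hat T_{r-s}\bigl(T_r\varphi\bigr),\qquad \psi_t:=T_{t-r}\bigl(\hat T_{1-r}\psi\bigr).
\]
Up to normalization of the time parameters via $c_\tau$, the subsolution property $\psi_t(y)-\varphi_s(x)\le c_{t-s}(x,y)$ follows from $\psi^r\le \varphi^r$ and the triangle inequality $c_{a+b}\le c_a+c_b$. Calibration along $\pi_{s,t}$ follows because, for $\Pi$-a.e.\ $\gamma$, all inequalities become equalities when the intermediate point is taken to be $\gamma_r$. In the same way, the sandwich $\varphi_s\ge$ (dual evolution of $\psi$) shows that optimality in the dual Kantorovich problem is inherited: the dual value for $(\varphi_s,\psi_t)$ equals $\int c\,d\pi_{s,t}=C(\mu_s,\mu_t)$.

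For regularity we apply Theorem \ref{main} with trivial parameter space $N$ and with $u=\varphi$, time $t_0=r$ and regularization time $r-s$. The forward/backward interchanged version handles $\psi_t$. The first point is that $r-s$ need not be small, whereas Theorem \ref{main} only gives $C^{1,1}_{loc}$ for small $s$. To avoid this I would choose the intermediate time differently for each side: take $r$ close to $s$ when regularizing $\varphi_s$, and close to $t$ for $\psi_t$. The calibrated pair then uses two different intermediate times $r_1\in(s,t)$ near $s$ and $r_2$ near $t$, gluing through $T$ from $r_1$ to $r_2$; the semigroup property keeps all calibration identities. The smallness $s_0$ is local, so I would let $r_1$ depend on the point. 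Since this is incompatible with defining a single function, I would instead use the fact that $\hat T_{\sigma}\circ T_{\tau}$ for $\sigma$ small is still calibrated. Concretely, I would rely on compactness: on a compact $K$ with $\mu_s(K)>1-\varepsilon$ pick a uniform $s_0$, and then exhaust.

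The main obstacle is verifying the hypotheses of Theorem \ref{main}: local semiconcavity of $(\tau,y)\mapsto T_\tau\varphi(y)$ near $(r,\gamma_r)$, and the condition $\partial^+T_r\varphi(\gamma_r)\Subset\operatorname{int}\C^*_{\gamma_r}$. Here I would use the sufficient conditions of Section \ref{discussion}. Causal compactness (i) ensures that minimizers in the infimum defining $T_r\varphi(y)$ exist and stay in a compact set for $y$ near $\gamma_r$. Hypothesis (ii) ensures that the minimizing geodesics from minimizers $x$ to $\gamma_r$ are timelike, since $\gamma_r$ is an interior point of a timelike maximizing segment and hence lies off the cut locus. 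Then $\ell$, and thus $c_\tau$, is smooth near $(x,\gamma_r)$, giving semiconcavity as an infimum of uniformly $C^2$ functions. The superdifferentials are convex hulls of $-\partial_y c_\tau$ over timelike minimizers, which lie in $\operatorname{int}\C^*$. Finally $\Omega_s$ is the union of the neighbourhoods $V$ around $\Pi$-a.e.\ $\gamma_s$, which has full $\mu_s$-measure, and likewise for $\Omega_t$.
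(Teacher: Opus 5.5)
Your proposal has two genuine gaps.

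\textbf{The gluing step.} You correctly see that Theorem \ref{main} only regularizes with a small, point-dependent time. However, ``pick a uniform $s_0$ on a compact $K$ with $\mu_s(K)>1-\varepsilon$, then exhaust'' does not produce a single pair. As $\varepsilon\to0$ the admissible regularization times may shrink to $0$, and the corresponding functions differ away from the calibrated points. Neither their union nor their limit is $C^{1,1}_{loc}$ on an open set of full measure.

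The missing idea is the paper's: glue the local regularizations $\hat T_{\tau_\gamma}\circ T_{s+\tau_\gamma}\varphi$ (each $C^{1,1}_{loc}$ near $\gamma_s$) with a locally finite smooth partition of unity, and likewise $\hat T_{\tau_\gamma}\circ T_{t+\tau_\gamma}\varphi$ near $\gamma_t$. This is legitimate for two reasons:
(a) Every relevant $\gamma$ is $\varphi$-calibrated (Lemma \ref{w}). Hence each piece satisfies $\hat T_\tau T_{s+\tau}\varphi(\gamma_s)=T_s\varphi(\gamma_s)$ whatever its $\tau$ (as long as $s+\tau\le 1$). So the glued function keeps these values at calibrated points, and the identity along $\pi_{s,t}$ survives.
(b) The subsolution inequality $\hat T_{\tau}T_{t+\tau}\varphi(y)-\hat T_{\tau'}T_{s+\tau'}\varphi(x)\le c_{t-s}(x,y)$ holds for any two pieces, even with $\tau\neq\tau'$. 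To see this, split a maximizing geodesic on $[0,t-s+\tau]$ from $x$ to a competitor $z'\in J^+(y)$ at time $\tau'$ and use the semigroup property. The inequality is then stable under convex combinations.

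\textbf{Mixing the two potentials.} Your $\varphi_s$ regularizes $T\varphi$, while $\psi_t=T_{t-r_2}\hat T_{1-r_2}\psi$ regularizes $\hat T\psi$. The interchanged Theorem \ref{main} for $\psi_t$ needs local semiconvexity of $\hat T\psi$ near $(1-t,\gamma_t)$ together with the cone condition. The only available criterion, the backward analogue of Theorem \ref{m}, needs $\psi\equiv-\infty$ off a causally compact set, i.e.\ $\supp(\mu_1)$ causally compact. The $\varphi_s$ side needs the same for $\supp(\mu_0)$, but hypothesis (i) gives only one of the two. The paper builds both functions from one potential, $(t-s)^{p-1}(\hat T_\tau T_{s+\tau}\varphi,\hat T_\tau T_{t+\tau}\varphi)$ as in Lemma \ref{ll}. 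It applies Theorem \ref{m} to $\varphi$ (set to $+\infty$ off $\supp(\mu_0)$) at both $(s,\gamma_s)$ and $(t,\gamma_t)$, or symmetrically with $\psi$.

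\textbf{Smaller points.}
\begin{enumerate}[(i)]
\item Minimizers of $T_r\varphi(y)$ need not exist, since $\varphi$ is not lower semicontinuous.
\item Near-minimizers stay uniformly timelike not because $\gamma_r$ is off the cut locus of $\gamma_0$. The reason is the argument of Lemma \ref{lad}: cancel $\varphi(x_k)$ using calibration of $\gamma$. Then a limit point $x$ makes the broken curve $x\to\gamma_r\to\gamma_1$ maximizing, hence an unbroken timelike geodesic.
\item Superdifferentials are generated by $+\partial_y c_r$, not $-\partial_y c_r$.
\item Taking $\gamma_r$ as intermediate point only gives the $\le$ half of calibration. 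The $\ge$ half comes from $\varphi_s\le T_s\varphi$ and $\psi_t\ge \hat T_{1-t}\psi$.
\item Inheriting dual optimality also requires $\varphi_s\in L^1(\mu_s)$ and $\psi_t\in L^1(\mu_t)$. This follows from the calibration identity along $\Pi$-a.e.\ $\gamma$.
\end{enumerate}
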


\subsection{Discussion}\label{discussion}
(1) As already mentioned, the dependence on $x$ in Theorem \ref{main} is essential in order for the statement to imply topological properties of the singularities of the time separation function. Indeed, one considers 
\[
    u_x = \chi_x:M\to \R\cup\{\infty\},\ \chi_x(y):=
    \begin{cases}
        0,& \text{ if } y=x,
        \\[5pt]
        +\infty,& \text{ otherwise.}
    \end{cases}
\]
 To derive the corresponding topological properties, it is necessary, for instance, that $s_0$ does not depend on $x$, at least locally. This dependence is not needed in the Riemannian case, due to an argument based on considering a suitable function $u$ on the product manifold, which is again Riemannian, whereas the product of two Lorentzian manifolds is not Lorentzian.
 \\

\noindent(2) Of course, an important special case in Theorem \ref{main} is when $u_x\equiv u$ does not depend on $x$, so that $\hat u=\hat u(t,y)$. In contrast to the Riemannian setting, we require the local semiconcavity of $\hat u$ near $(t_0,y_0)$. Although this property also plays a crucial role in the Riemannian case, it follows there from the finiteness of $T_{t_1}u(y_1)$ for some $t_1>t_0$ and $y_1\in M$, see Theorem 1 in \cite{FathiHJ}; the theorem fails in the Lorentzian setting, which is why we impose this assumption. In fact, I am not aware of any version of the theorem except for Theorem \ref{m} in the present paper. Finally, Assumption \eqref{eqzv} has no counterpart in the Riemannian case, but it is used in several technical parts of the proof. 
\medskip

Theorem \ref{m} states that, if $N=M$, $u_x\equiv u$ is $+\infty$ outside of a causally compact set and $x,y\in M$ are such that $u(x)$ is finite with $y\in I^+(x)$ and
\[
    T_1u(y) = u(x) + c_1(x,y),
\]
then the hypothesis of Theorem \ref{main} are satisfied at any point $(t,\gamma_t)$, $t\in (0,1)$, where $\gamma:[0,1]\to M$ is a maximizing geodesic from $x$ to $y$.

If $(\varphi,\psi)$ is a $(c_1,\pi)$-calibrated pair as in Theorem \ref{thmg}, then we can set $u:=\varphi$. Notice that $\Pi$ is concentrated on the set of future timelike geodesics such that $\psi(\gamma_1)-\varphi(\gamma_0)=c_1(\gamma_0,\gamma_1)$. This gives
\[
    T_1\varphi(\gamma_1) = \varphi(\gamma_0)+c_1(\gamma_0,\gamma_1)
\]
and we may thus apply the above criterion to conclude that $\hat T_\tau \circ T_{t+\tau}\varphi$ is $C^{1,1}_{loc}$ near $\gamma_t$ for small $\tau>0$. An analogous result holds with $s$ replacing $t$. Using a partition of unity argument, Theorem \ref{thmg} is then a consequence of the fact that $(t-s)^{p-1}(\hat T_\tau\circ T_{s+\tau}\varphi,\hat T_\tau \circ T_{t+\tau}\varphi)$ is $(c_1,\pi_{s,t})$-calibrated.
\medskip

\noindent (3) The optimal transport problem we study is usually defined as a maximazation problem \cite{McCann, Mondino/Cavalletti, Octet}: For two probability measure $\mu,\nu\in \mathcal{P}(M)$, one defines $\ell_p(\mu,\nu)=-\infty$ if $\Pi_{\leq}(\mu,\nu)=\emptyset$ and otherwise as
\begin{align*}
    \ell_p(\mu,\nu) := \bigg(\sup_{\pi\in \Pi_{\leq}(\mu,\nu)} \int_{M\times M} \ell(x,y)^p\, d\pi(x,y)\bigg)^\frac{1}{p}.
\end{align*}
Up to taken the $p$-th root, this definition differs from ours only be a sign ($\ell_p^p(\mu,\nu)=-C(\mu,\nu)$), so the two problems are equivalent. The advantage of the standard formulation is that $\ell_p$ satisfies the reverse triangle inequality (it is a time separation function on $\mathcal{P}(M)$ in the sense of \cite{Octet}); our definition, on the other hand, focuses on the Lagrangian formulation.

Let us also compare our notion of displacement interpolation (used by \cite{Mondino/Cavalletti}) to the notion of $p$-geodesics introduced by McCann \cite{McCann2}. A $p$-geodesic is a narrowly continuous curve $(\mu_t)$ satisfying 
\[
    \ell_p(\mu_s,\mu_t) = (t-s)\ell_p(\mu_0,\mu_1)\in (0,\infty)
\]
for all $0\leq s<t\leq 1$. Any displacement interpolation with non-zero total cost is a $p$-geodesic \cite{Mondino/Cavalletti}. Conversely, if $(\mu_t)$ is a $p$-geodesic, $\mu_0$ and $\mu_1$ have compact support and each optimal coupling is concentrated on $I^+$, then one can show by standard arguments that $(\mu_t)$ is induced by a dynamical optimal coupling. In particular, the two definitions coincide for the important class of $p$-separated measures, as defined in \cite{McCann2}.

\subsection{Plan of the paper}

The structure of this paper is as follows: 
In Section 2 we describe the precise setting considered in this paper and collect the necessary background material. For instance, we study elementary properties of the Lagrangian, its Hamiltonian and maximizing curves. In Section 3, we study in great detail a local version of the Lax-Oleinik semigroup. In Section 4, we give the proof of the main theorem and give sufficient conditions ensuring the validity of the assumptions. Finally, we prove our optimal transport result in Section 5.

\section{Preliminaries}\label{sec3}

\textbf{For the rest of this paper}, let $p\in (0,1)$ and let $(M,g)$ be an $n$-dimensional globally hyperbolic spacetime, where the metric $g$ is taken to have signature $(-,+,...,+)$. In particular, $M$ is time-oriented. We understand $0$ to be a future causal vector. A curve is always assumed to be piecewise smooth if not otherwise said. In particular, a curve is referred to as future causal (timelike) if it is piecewise smooth and future causal (timelike). We denote by $I^+$ resp.\ $J^+$ the set of pairs $(x,y)$ which can be connected by a future timelike resp.\ future causal curve. The Lorentzian distance function is denoted by $\ell$:
\begin{align}
    \ell(x,y):=
    \begin{cases}
        \sup_\gamma {\ell}_g(\gamma),&\text{ if } (x,y)\in J^+,
        \\
        0,& \text{ else,}
    \end{cases} \label{eqzz}
\end{align}
where the supremum is taken over all future causal curves $\gamma:[a,b]\to M$ connecting $x$ to $y$ and the (Lorentzian) length ${\ell}_g$ of a future causal curve $\gamma:[a,b]\to M$ is defined as
\[
    \int_a^b \sqrt{|g(\dot \gamma_t,\dot \gamma_t)|}\, dt.
\]
For $x\in M$, we denote by $\C_x\subseteq T_xM$ the cone of future causal vectors. Note that $\C_x$ is closed. We also set $\C:=\{(x,v)\in TM\mid v\in \C_x\}$. We can equip $M$ with a complete Riemannian metric, which will be fixed and denoted by $h$. The $h$- and $g$-norm of a tangent vector $v\in T_xM$ are denoted by $|v|_h$ and $|v|_g:=\sqrt{|g(v,v)|}$, respectively. The Riemannian distance is denoted by $d_h$.

\subsection{The Lagrangian}
In this short section, we define the Lagrangian and the associated minimal action. We recall the most important definitions/notations and results that will be used many times in this paper. Some standard and well-known proofs are deferred to the appendix.

\begin{definition}\rm
    We define the Lagrangian $L:TM\to \R\cup\{+\infty\}$ as
    \begin{align*}
        L(x,v):=
        \begin{cases}
          -|v|^{p}_g, & \text{ if } v\in \C_x,
          \\[10pt]
          +\infty,& \text{otherwise.}
        \end{cases}
    \end{align*}
    See also \cite{Mondino/Suhr}, Section 2, and \cite{McCann2}, Section 3.
\end{definition}

\begin{definition}\rm
\begin{enumerate}[(a)]
    \item 
    The \emph{action} of a curve $\gamma:[a,b]\to M$ is defined by
    \begin{align*}
        \mathbb{L}(\gamma):=\int_a^b L(\gamma_t,\dot \gamma_t)\, dt\in (-\infty,\infty].
    \end{align*}
    Note that $\mathbb{L}(\gamma)$ is finite if and only if $\gamma$ is future causal.
    \item A curve $\gamma:[a,b]\to M$ is called an $L$-\emph{minimizer} if for any other curve $\tilde \gamma:[a,b]\to M$ with the same endpoints, we have $\mathbb{L}(\gamma)\leq \mathbb{L}(\tilde \gamma)$.
\end{enumerate}
\end{definition}

\begin{notation}[On the terminology] \rm\label{remf}
    We reserve the term \emph{maximize} to refer specifically to the Lorentzian length functional. That is, a maximizing curve $\gamma:[a,b]\to M$ is a future causal curve that satisfies $\ell_g(\gamma)=\ell(\gamma_a,\gamma_b)$.
\end{notation}

 The following result is well-known. 

\begin{theorem}\label{asdfghj}
    The set $J^+$ is closed. For any two points $(x,y)\in J^+$, there exists a maximizing geodesic connecting $x$ to $y$. Moreover, any maximizing curve must be a pregeodesic.
\end{theorem}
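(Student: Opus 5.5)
The plan is to reduce all three assertions to standard facts about globally hyperbolic spacetimes, using the Riemannian metric $h$ fixed above for compactness arguments.

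\textbf{Closedness of $J^+$.} Fix a Cauchy time function $\tau$. By global hyperbolicity, the causal diamonds $J^+(K_1)\cap J^-(K_2)$ are compact for all compact $K_1,K_2\subseteq M$. Let $(x_k,y_k)\in J^+$ converge to $(x,y)$, and choose future causal curves $\gamma_k$ from $x_k$ to $y_k$. Parametrize each $\gamma_k$ by $\tau$; each curve is then locally uniformly Lipschitz with respect to $h$. All $\gamma_k$ lie in the compact diamond $J^+(K_1)\cap J^-(K_2)$, where $K_1,K_2$ are compact neighbourhoods of $x$ and $y$. Arzel\`a--Ascoli then gives a subsequence converging uniformly to a Lipschitz curve $\gamma$ from $x$ to $y$. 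A limit curve theorem (causality is preserved under $C^0$-limits, because the cone condition is closed) shows that $\gamma$ is a causal Lipschitz curve. By the standard fact that causal Lipschitz curves can be replaced by piecewise smooth causal curves with the same endpoints, we get $(x,y)\in J^+$.

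\textbf{Existence of maximizers.} For $(x,y)\in J^+$, $\ell(x,y)$ is finite: every causal curve from $x$ to $y$ lies in the compact set $J^+(x)\cap J^-(y)$, where lengths are uniformly bounded in terms of $\tau$-parametrization. Take causal curves $\gamma_k$ with $\ell_g(\gamma_k)\to\ell(x,y)$ and extract a uniform limit $\gamma$ as before. The key input is upper semicontinuity of the Lorentzian length under uniform convergence of causal curves. This follows from concavity of $v\mapsto |v|_g$ on the future cone, by writing length as a supremum over partitions of locally maximized segment lengths, or via a local convexity argument. It gives $\ell_g(\gamma)\geq \limsup\ell_g(\gamma_k)=\ell(x,y)$, so $\gamma$ is maximizing.

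\textbf{Regularity of maximizers.} It remains to see that a maximizing curve is a pregeodesic; this also shows that the limit curve above is smooth, and hence a genuine maximizing geodesic after reparametrization. Maximality passes to subsegments. Cover the curve by convex normal neighbourhoods. In such a neighbourhood the unique causal geodesic segment strictly maximizes length among causal curves between its endpoints, unless the competing curve is a reparametrization of it. This is the local Lorentzian Gauss lemma, with the null case handled separately: a null maximizer must be a null pregeodesic, since otherwise it could be deformed to a timelike curve of positive length. Hence every small piece of the curve is a reparametrized geodesic, and so the whole curve is a pregeodesic.

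The main obstacle is the limit-curve step: upper semicontinuity of length, together with the fact that the limit is causal, for curves that are merely Lipschitz. Since the paper treats all of this as well known, I would cite the standard references (O'Neill, Beem--Ehrlich--Easley) for these facts rather than reprove them.
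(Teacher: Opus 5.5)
Your proposal is correct and takes essentially the paper's approach: the paper cites O'Neill (Ch.~14) and Minguzzi for this standard result, and your sketch is the classical Avez--Seifert argument behind those references (limit curves via a Cauchy temporal function and Arzel\`a--Ascoli, upper semicontinuity of length, local maximality of causal geodesics in convex neighbourhoods). One slip: the Lorentzian length of a causal curve is the \emph{infimum}, not the supremum, over partitions of the sums of local time separations of consecutive points, since refining a partition can only decrease the sum by the reverse triangle inequality, and it is exactly this infimum representation that gives upper semicontinuity.
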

\begin{proof}
    See \cite{ONeill}, Chapter 14, Proposition 19 and \cite{Minguzzi}, Theorem 2.9.
\end{proof}

\begin{lemma}[Minimizers and geodesics]\label{lt}
	Let $x\in M$, $y\in J^+(x)$ and $a<b$. Let $\gamma:[a,b]\to M$ be a curve connecting $x$ to $y$. Then:
	\begin{align*}
	\gamma \text{  is a maximizing geodesic} \Rightarrow \gamma \text{ is $L$-minimizing }  
	\end{align*}
	If $y\in I^+(x)$, the implication becomes an equivalence. 
\end{lemma}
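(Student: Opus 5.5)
The plan is to compare the action of an arbitrary curve with its Lorentzian length, using Hölder's (Jensen's) inequality for the concave function $r\mapsto r^p$. Let $\tilde\gamma:[a,b]\to M$ be any curve from $x$ to $y$. If $\tilde\gamma$ is not future causal, then $\mathbb{L}(\tilde\gamma)=+\infty$ and there is nothing to show. If it is future causal, Jensen's inequality (with $0<p<1$) gives
\[
\frac{1}{b-a}\int_a^b |\dot{\tilde\gamma}_t|_g^p\,dt\le \Bigl(\frac{1}{b-a}\int_a^b|\dot{\tilde\gamma}_t|_g\,dt\Bigr)^p .
\]
Hence
\[
\mathbb{L}(\tilde\gamma)\ge -(b-a)^{1-p}\ell_g(\tilde\gamma)^p\ge -(b-a)^{1-p}\ell(x,y)^p .
\]
Now let $\gamma$ be a maximizing geodesic. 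Its speed $|\dot\gamma_t|_g$ is constant, since $g(\dot\gamma,\dot\gamma)$ is conserved along geodesics. So equality holds in Jensen's inequality, and $\ell_g(\gamma)=\ell(x,y)$. Therefore $\mathbb{L}(\gamma)=-(b-a)^{1-p}\ell(x,y)^p\le\mathbb{L}(\tilde\gamma)$, which proves the implication. This argument also covers the null case $\ell(x,y)=0$, where both sides equal $0$ and the lower bound is trivially attained.

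For the converse, assume $y\in I^+(x)$ and that $\gamma$ is $L$-minimizing. By Theorem \ref{asdfghj} there is a maximizing geodesic from $x$ to $y$. By the first part its action equals $-(b-a)^{1-p}\ell(x,y)^p$, which is $<0$. Minimality of $\gamma$ then forces $\mathbb{L}(\gamma)\le -(b-a)^{1-p}\ell(x,y)^p<\infty$, so $\gamma$ is future causal. The chain of inequalities above must now be equalities throughout. This yields two facts:
\begin{itemize}
\item $\ell_g(\gamma)=\ell(x,y)$, so $\gamma$ is maximizing;
\item equality holds in Jensen's inequality.
\end{itemize}
Strict concavity of $r\mapsto r^p$ turns the Jensen equality into the statement that $|\dot\gamma_t|_g$ is a.e.\ constant, equal to $\ell(x,y)/(b-a)>0$.

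Since $\gamma$ is maximizing, Theorem \ref{asdfghj} shows it is a pregeodesic, i.e.\ a reparametrization of a geodesic. The main obstacle is to conclude that $\gamma$ is itself affinely parametrized, rather than only a pregeodesic. Because $\ell(x,y)>0$, the pregeodesic is timelike, so its $g$-arclength parametrization is a genuine geodesic. Constant $g$-speed of $\gamma$ means $\gamma$ differs from that parametrization by an affine change of parameter. Hence $\gamma$ is a geodesic, using piecewise smoothness to pass from "a.e.\ constant" to "constant".

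We also see why the hypothesis $y\in I^+(x)$ is needed. In the null case every reparametrization of a null maximizing geodesic has action $0$ and is therefore minimizing. Such reparametrizations need not be geodesics, so the converse genuinely fails without the timelike assumption.
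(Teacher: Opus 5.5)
Your proof is correct and follows essentially the same route as the paper: the Hölder (equivalently Jensen) bound $\mathbb{L}(\tilde\gamma)\ge -(b-a)^{1-p}\ell_g(\tilde\gamma)^p$ with its equality case, the maximizing geodesic from Theorem \ref{asdfghj} that forces the chain of inequalities to be equalities, and the step ``pregeodesic plus constant nonzero $g$-speed implies geodesic.'' You are only slightly more explicit than the paper, in justifying why an $L$-minimizer must be future causal and in explaining why the converse fails in the null case.
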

\begin{proof}
	See Appendix \ref{proofs}.
\end{proof}

\begin{remark}[Euler-Lagrange flow = geodesic flow]\rm
	It is not difficult to verify that the second derivative along the fibres, $\frac{\partial^2 L}{\partial v^2}$, is positive definite at every point $(x,v)\in \op{int}(\C)$ (\cite{Mondino/Suhr}, Lemma 2.1). As a consequence, it is well-known (\cite{Fathi}, Section 2.4) that there exists a smooth local flow (the Euler-Lagrange flow) $\phi_t$ on $\op{int}(\C)$ whose orbits are precisely the speed curves of extremals for $L$; that is, the curves of the form $(\gamma_t,\dot \gamma_t)$, where $\gamma:I\to M$ is a $C^2$-curve satisfying, in local coordinates, the Euler-Lagrange equation
	\begin{align*}
	\frac{d}{dt}\bigg(\frac{\partial L}{\partial v}(\gamma_t,\dot \gamma_t)\bigg)=\frac{\partial L}{\partial x}(\gamma_t,\dot \gamma_t).
	\end{align*}
	Moreover, every future timelike $L$-minimizing curve $\gamma:[a,b]\to M$ solves the Euler-Lagrange equation. Since the future timelike $L$-minimizing curves are exactly the future timelike maximizing geodesics, and since every future timelike geodesic is locally maximizing (Proposition 7.3 in \cite{Metsch1}), it follows that the Euler-Lagrange flow coincides with the geodesic flow restricted to the invariant set $\op{int}(\C)$.
\end{remark}

\begin{definition}[Minimal action]\rm\label{minac}
	For $t>0$, the \emph{minimal time-$t$-action} to go from $x$ to $y$ is defined by
	\begin{align*}
	\C(t,x,y):=c_t(x,y) :=\inf\{\mathbb{L}(\gamma)\mid \gamma:[0,t] \to M\text{ is a curve connecting } x \text{ to } y\}.
	\end{align*}
\end{definition}

\begin{corollary}[Formula for the minimal action]\label{a}
	We have the following identity:
	\begin{align*}
	c_t(x,y)=
	\begin{cases}
	-t^{1-p} \ell(x,y)^{p}, &\text{ if } y\in J^+(x),
	\\[10pt]
	+\infty, & \text{ otherwise.}
	\end{cases}
	\end{align*}
	Moreover, for any $x\in M$, $y\in J^+(x)$ and $t>0$, there exists a smooth $L$-minimizing geodesic $\gamma:[0,t]\to M$ connecting $x$ to $y$.
\end{corollary}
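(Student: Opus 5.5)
The formula follows from a reparametrization argument based on Theorem \ref{asdfghj} and Lemma \ref{lt}. First I dispose of the case $y\notin J^+(x)$. Any curve $\gamma:[0,t]\to M$ from $x$ to $y$ with finite action must be future causal, because $L=+\infty$ off $\C$ and $\gamma$ is piecewise smooth, so a non-causal velocity on a subinterval forces infinite action. Hence no curve has finite action and $c_t(x,y)=+\infty$.

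Now let $y\in J^+(x)$ and let $\gamma:[0,t]\to M$ be any future causal curve from $x$ to $y$. I bound its action by Hölder's inequality with exponents $1/p$ and $1/(1-p)$:
\begin{align*}
\int_0^t |\dot\gamma_s|_g^p\,ds \le t^{1-p}\Bigl(\int_0^t |\dot\gamma_s|_g\,ds\Bigr)^p = t^{1-p}\ell_g(\gamma)^p \le t^{1-p}\ell(x,y)^p .
\end{align*}
This gives $\mathbb{L}(\gamma)\ge -t^{1-p}\ell(x,y)^p$, so $c_t(x,y)\ge -t^{1-p}\ell(x,y)^p$.

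For the reverse inequality I take a maximizing geodesic from $x$ to $y$, which exists by Theorem \ref{asdfghj}, and reparametrize it affinely to $\gamma:[0,t]\to M$. Since it is a geodesic, $|\dot\gamma_s|_g$ is constant, and therefore equals $\ell(x,y)/t$. Its action is then
\begin{align*}
-t\,(\ell(x,y)/t)^p = -t^{1-p}\ell(x,y)^p .
\end{align*}
So the infimum is attained and equals this value. This also proves the existence claim, since $\gamma$ is a smooth geodesic attaining the infimum and hence $L$-minimizing. This is consistent with Lemma \ref{lt}.

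Two points need attention. The first is the degenerate case $\ell(x,y)=0$, for instance $x=y$ or $y$ joined to $x$ only by null curves. There the maximizing geodesic is null or constant, the computation gives action $0$, and the lower bound also gives $0$, so the formula still holds. The second, which I expect to be the main subtlety, is that the equality case of Hölder's inequality needs constant speed. This is why I use the affinely parametrized geodesic, not an arbitrary maximizing curve.
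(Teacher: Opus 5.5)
Your proof is correct and is essentially the paper's argument. The paper derives the corollary from the existence of maximizing geodesics (Theorem \ref{asdfghj}) together with Lemma \ref{lt}, whose proof is exactly your H\"older estimate $\mathbb{L}(\gamma)\ge -(b-a)^{1-p}\ell_g(\gamma)^p$; you inline that lemma and compute the action of the affinely parametrized maximizing geodesic directly, rather than citing its equality case.
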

\begin{proof}
	This follows immediately from Theorem \ref{asdfghj} and Lemma \ref{lt}.
\end{proof}

\begin{definition}[dual cone]\rm\label{causalco}
	For each $x\in M$, we denote the canonical isomorphism by
	\begin{align*}
	T_xM\to T_x^*M,\ v\mapsto v^\flat:=g(v,\cdot).
	\end{align*}
	We define $\C_x^*$ as the image of $\C_x$ under this isomorphism, and $\C^*:=\{(x,p)\in T^*M\mid p\in \C_x^*\}.$
\end{definition}

\begin{definition}[Semiconcavity and super-differentials]\rm \label{semiconvexity}
\begin{enumerate}[(i)]
    \item A function $f:U\to \R$ defined on the open set $U\subseteq \R^n$ is called \emph{semiconcave} if there exists $C\in \R$ such that
    \begin{align}
        \forall x\in U,\ \exists p\in \R^n:\ \forall y\in \R^n: f(y)\leq f(x)+\langle p,y-x\rangle -C|y-x|^2. \label{eqzad}
    \end{align}
    We shall say that $f$ is \emph{$C$-concave}. The notion of \emph{local semiconcavity} is defined in an obvious way. A family of functions $f_i:U\to \R$ is called \emph{uniformly semiconcave} if $C$ in \eqref{eqzad} can be chosen independently of $i$.
    \item A function $f:U\to \R$ defined on the open subset $U\subseteq M$ is called \emph{locally semiconcave} if it is so when computed in some (hence any) chart.
    A family of functions $f_i:U\to \R$ is called \emph{uniformly locally semiconcave} if, for each $x\in U$, there exists a chart $\varphi$ around $x$ such that the family of functions $(f_i\circ \varphi^{-1})_i$ is uniformly semiconcave.
    \item A function $f:U\to \overbar \R$ defined on the open subset $U\subseteq M$ is said to be \emph{super-differentiable} at $x$ with \emph{super-differential} $p\in T_x^*M$ if $f(x)\in \R$ and 
    \[
        f(\exp^h_x) \leq f(x) +p(v)+o(|v|_h),
    \]
    where $\exp_x^h$ denotes the exponential function of the Riemannian metric $h$.
    We denote by $\partial^+f(x)$ the set of all super-differentials.
    \item The notion of \emph{semiconvexity/sub-differentiability} is defined analogously by replacing "$\leq$" with "$\geq$", with $\partial^-f$ denoting the set of subdifferentials.
\end{enumerate}
    
\end{definition}

\begin{theorem}[Semiconcavity and supergradients of the minimal action]\label{A} Let $\C:(0,\infty)\times M\times M\to \R\cup\{+\infty\}$ be the function from Definition \ref{minac}.
	\begin{enumerate}[(a)]
		\item $\C$ is real-valued and continuous on $(0,\infty)\times J^+$, and even locally semiconcave on $(0,\infty)\times I^+$.
		\item If $x\in M$, $y\in I^+(x)$ and $t>0$, then the set of super-differentials of $\C$ at the point $(t,x,y)$ is given by
		\begin{align*}
		\partial^+ \C(t,x,y)
		=
		\op{conv}\left(\bigg\{\left((p-1)\Bigl(\frac{\ell(x,y)}{t}\Bigr)^p,-\frac{\partial L}{\partial v}(x,\dot \gamma_0), \frac{\partial L}{\partial v}(y,\dot \gamma_t)\right)\bigg\}\right),
		\end{align*}
		where the set is taken over all maximizing geodesics $\gamma:[0,t]\to M$ connecting $x$ to $y$.
		
		In particular, $\C$ is differentiable at $(t,x,y)$ if and only if there is a unique maximizing geodesic connecting $x$ to $y$ in time $t$ (equivalently, in time $1$).
	\end{enumerate}
\end{theorem}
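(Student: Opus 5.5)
The plan is to reduce to the standard Tonelli-type theory on the open set where the Lagrangian is smooth and fibrewise strictly convex. The key first observation is the scaling identity
\[
\C(t,x,y) = t\,L\Bigl(\tfrac{\ell(x,y)}{t}\Bigr), \qquad \text{i.e.}\qquad \C(t,x,y)=-t^{1-p}\ell(x,y)^p ,
\]
valid on $J^+$ by Corollary \ref{a}. This reduces both statements to the corresponding properties of $\ell$ on $I^+$.

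\textbf{Part (a).} Real-valuedness on $(0,\infty)\times J^+$ is immediate from Corollary \ref{a}. Continuity there follows from continuity of $\ell$ on a globally hyperbolic spacetime, a standard fact (e.g.\ \cite{ONeill}, Chapter 14) that I would cite. For local semiconcavity on $(0,\infty)\times I^+$, I will not use semiconcavity of $\ell$ directly. Instead I use the classical upper-support argument.

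Fix $(t_0,x_0,y_0)$ with $y_0\in I^+(x_0)$, and take a maximizing timelike geodesic $\gamma:[0,t_0]\to M$; by Lemma \ref{lt} it is $L$-minimizing. For $(t,x,y)$ near $(t_0,x_0,y_0)$, I build a competitor curve $\gamma^{t,x,y}:[0,t]\to M$. It is obtained by reparametrizing $\gamma$ linearly to $[0,t]$ and modifying it in small charts near its endpoints: for instance, replace $\gamma$ on $[0,\delta]$ by a coordinate-linear interpolation from $x$ to $\gamma_\delta$, and similarly at the other end. Because $\dot\gamma$ lies in $\op{int}(\C)$, and $\op{int}(\C)$ is open, the modified velocities remain in a compact subset of $\op{int}(\C)$ where $L$ is smooth. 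The function
\[
(t,x,y)\mapsto \mathbb{L}(\gamma^{t,x,y})
\]
is therefore $C^2$ near $(t_0,x_0,y_0)$, lies above $\C$, and touches it at $(t_0,x_0,y_0)$.

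The key point, and the main technical obstacle, is that the $C^2$ bounds of these support functions must be uniform. That is, they must hold uniformly over all base points in a compact neighbourhood $K$ of $(t_0,x_0,y_0)$ and over all maximizing geodesics between them. I would get this from a compactness argument. Maximizing geodesics between points of a compact subset of $I^+$ have initial velocities in a compact subset of $\op{int}(\C)$: by global hyperbolicity, limits of maximizing geodesics are maximizing, and $\ell$ is bounded below by a positive constant on $K$, which keeps the velocities away from the light cone. Uniform $C^2$ upper support functions then give local semiconcavity in charts, exactly as in \cite{Fathi}.

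\textbf{Part (b).} Differentiating the support function at the contact point gives the standard first-variation formula. The $x$- and $y$-derivatives are $-\frac{\partial L}{\partial v}(x,\dot\gamma_0)$ and $\frac{\partial L}{\partial v}(y,\dot\gamma_t)$; the endpoint terms come from the Euler–Lagrange equation. The $t$-derivative follows from the explicit formula:
\[
\partial_t\bigl(-t^{1-p}\ell^p\bigr)=(p-1)\Bigl(\tfrac{\ell}{t}\Bigr)^p .
\]
Equivalently, it is minus the energy $\frac{\partial L}{\partial v}v-L$, which evaluates to the same quantity. Every such vector is a super-differential, so the convex hull of these vectors is contained in $\partial^+\C$, since $\partial^+$ is convex.

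For the reverse inclusion, I use the fact that for a semiconcave function $\partial^+\C(t,x,y)$ is the convex hull of the reachable gradients, i.e.\ limits of $D\C$ at differentiability points $(t_k,x_k,y_k)\to(t,x,y)$. At such a differentiability point, the gradient coincides with the vector computed from any maximizing geodesic. By the compactness noted above, a subsequence of those geodesics converges to a maximizing geodesic from $x$ to $y$, and the gradients converge to the corresponding vector.

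The differentiability criterion then follows. Strict convexity of $L$ on $\op{int}(\C)$ makes $v\mapsto\frac{\partial L}{\partial v}(x,v)$ injective, so distinct maximizing geodesics from $x$ yield distinct covectors. Hence the hull is a singleton exactly when the maximizing geodesic is unique. The equivalence "in time $t$ / time $1$" holds because the two families of geodesics differ only by a reparametrization.
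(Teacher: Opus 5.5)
The paper gives no argument of its own here. It refers to \cite{McCann2} for (a) and to \cite{Metsch3} for the case $p=\frac12$. Your sketch is the standard Tonelli-type proof adapted to a Lagrangian that is smooth only on $\op{int}(\C)$: uniform $C^2$ upper supports from perturbed maximizing geodesics, compactness of maximizers over compact subsets of $I^+$, and the reachable-gradient description of $\partial^+$. It is correct in outline. You rightly single out the compactness of velocities in $\op{int}(\C)$ as the one input beyond Tonelli theory.

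Two steps need repair.

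First, in a general chart the coordinate-linear segment from $x$ to $\gamma_\delta$ does not reduce to $\gamma|_{[0,\delta]}$ when $x=x_0$. Your function then lies strictly above $\C$ at the base point, so it gives neither semiconcavity nor supergradients. Use instead $s\mapsto \gamma_s+(1-s/\delta)(x-x_0)$ in coordinates. Equivalently, work in normal coordinates of $g$ centred at $x_0$, where your segment is exactly this curve. Integrating by parts with the Euler--Lagrange equation then gives the $x$-derivative $-\frac{\partial L}{\partial v}(x_0,\dot\gamma_0)$.

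Second, the bound $|\dot\gamma|_g=\ell(x,y)/t\geq c>0$ keeps velocities off the light cone only once you know they are $h$-bounded. The phrase ``limits of maximizing geodesics are maximizing'' does not by itself explain why they are. Argue by contradiction:
\begin{enumerate}[(1)]
\item Suppose $|\dot\gamma_k(0)|_h\to\infty$. The normalized initial velocities converge to a future null vector $v\neq 0$.
\item The rescaled geodesics converge in $C^1$ to a nonconstant null geodesic segment $\sigma|_{[0,\epsilon]}$ starting at $x$, with $\ell(x_k,\sigma_k(\epsilon))\to 0$.
\item Since $\sigma_k(\epsilon)$ lies on the maximizer $\gamma_k$, continuity of $\ell$ gives $\ell(\sigma(\epsilon),y)=\ell(x,y)>0$.
\item Hence $\sigma|_{[0,\epsilon]}$ followed by a maximizing geodesic to $y$ is a maximizing curve that changes causal character. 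This is impossible for a pregeodesic, which is the same mechanism that ends the proof of Lemma~\ref{lad}.
\end{enumerate}
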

\begin{proof}
	See \cite{McCann2}, Proposition 3.4 for (a). A complete proof in the case $p=\frac 12$ can be found in \cite{Metsch3}. The proof carries over to $p\in (0,1]$ with no additional difficulties.
\end{proof}

\begin{definition}[Legendre transform]\rm\label{Legendre}
	The \emph{Legendre transform} of $L$ is the map
	\begin{align*}
	{\cal L}:\op{int}(\C)\to T^*M,\ 
	(x,v)\mapsto \left(x,\frac{\partial L}{\partial v}(x,v)\right).
	\end{align*}
\end{definition}

\begin{proposition}[Legendre transform is a diffeomorphism]\label{pro2}
	The Legendre transform is a diffeomorphism onto its image
	$\op{im}({\cal L})=\op{int}(\C^{*})$.
\end{proposition}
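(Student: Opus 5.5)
We compute the fibre derivative explicitly and then analyse the resulting map pointwise. For $(x,v)\in \op{int}(\C)$ we have $L(x,v)=-(-g(v,v))^{p/2}$. Differentiating in the fibre direction $w\in T_xM$ gives
\begin{align*}
\frac{\partial L}{\partial v}(x,v)(w) = -\frac p2\,(-g(v,v))^{\frac p2-1}\cdot(-2g(v,w)) = p\,|v|_g^{p-2}\,g(v,w).
\end{align*}
Hence ${\cal L}(x,v)=(x,\,p|v|_g^{p-2}v^\flat)$. This map is smooth on $\op{int}(\C)$, since $|v|_g>0$ there. It also preserves fibres. Consequently, it suffices to show that for each fixed $x$ the map
\[
F_x:\op{int}(\C_x)\to T_x^*M,\quad v\mapsto p|v|_g^{p-2}v^\flat
\]
is a bijection onto $\op{int}(\C_x^*)$ with invertible differential, and that the inverse is smooth jointly in $(x,\xi)$.

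\textbf{Image.} Since $\flat$ is a linear isomorphism carrying $\C_x$ onto $\C^*_x$, it carries $\op{int}(\C_x)$ onto $\op{int}(\C_x^*)$. A positive multiple of a future timelike vector is again future timelike, so $F_x(\op{int}\C_x)\subseteq \op{int}(\C_x^*)$.

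\textbf{Inverse.} Given $\xi\in\op{int}(\C^*_x)$, write $\xi=w^\flat$ with $w$ future timelike, and look for $v=\lambda w$ with $\lambda>0$. The equation $F_x(v)=\xi$ becomes
\[
p\,\lambda^{p-1}|w|_g^{p-2}=1 .
\]
Because $p-1\neq 0$, this has the unique positive solution
\[
\lambda=\bigl(p|w|_g^{p-2}\bigr)^{1/(1-p)} .
\]
Uniqueness among all $v$ follows because any preimage $v$ must be a positive multiple of $w$, as $v^\flat$ is a positive multiple of $\xi$. The resulting inverse
\[
(x,\xi)\mapsto \bigl(x,\,(p|\xi^\sharp|_g^{p-2})^{1/(1-p)}\xi^\sharp\bigr)
\]
is visibly smooth on $\op{int}(\C^*)$. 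Therefore ${\cal L}$ is a smooth bijection onto $\op{int}(\C^*)$ with smooth inverse, that is, a diffeomorphism onto its image.

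The only step that needs real care is computing the explicit inverse and checking that it is smooth. This reduces to the scalar equation $\lambda^{p-1}=c$ having a unique smooth positive solution, which holds precisely because $p<1$. The positive definiteness of the fibre Hessian (\cite{Mondino/Suhr}, Lemma 2.1) gives an alternative check of the local diffeomorphism property, but it is not needed here, since the inverse has been written down explicitly.
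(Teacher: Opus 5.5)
Your proof is correct and follows the paper's route. The paper likewise computes ${\cal L}(x,v)=(x,p|v|_g^{p-2}v^\flat)$ and simply declares this a diffeomorphism onto $\op{int}(\C^*)$ ``because $p\in(0,1)$''; you supply the explicit smooth inverse that the paper leaves implicit. One wording nit: as you say in the inverse step, the equation $p\lambda^{p-1}|w|_g^{p-2}=1$ has a unique positive solution because $p\neq 1$, so your closing remark that it holds ``precisely because $p<1$'' overstates what is needed.
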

\begin{proof}
	We have
	\begin{align}
	\bigg(x,\frac{\partial L}{\partial v}(x,v)\bigg)
	=
	\bigg(x,p |v|_g^{{p-2}}v^\flat\bigg),
	\label{eqk}
	\end{align}
	which is clearly a diffeomorphism from $\op{int}(\C)$ to $\op{int}(\C^*)$ because $p\in (0,1)$.
\end{proof}

\begin{definition}[Hamiltonian]\rm
	The \emph{Hamiltonian} associated with $L$ is the function $H:T^*M\to \R\cup\{+\infty\}$ defined by
	\begin{align*}
	H(x,p):=\sup\{pv-L(x,v)\mid v\in T_xM\}.
	\end{align*}
\end{definition}

\begin{lemma}[See \cite{Mondino/Suhr}, Section 2, and \cite{McCann2}, Lemma 3.1]\label{lba}
	We have
	\begin{align*}
	H({\cal L}(x,v))=\frac{\partial L}{\partial v}(x,v)(v)-L(x,v)
	=(p-1) L(x,v)
	\end{align*}
	for all $(x,v)\in \op{int}(\C)$. 
\end{lemma}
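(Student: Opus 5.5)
The plan is a direct computation at a fixed point $(x,v)\in\op{int}(\C)$: first identify the maximizer in the definition of $H$ at the covector $P:=\frac{\partial L}{\partial v}(x,v)$, then evaluate. Throughout, $x$ is fixed and everything happens in the vector space $T_xM$.

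\emph{Step 1: the fibre derivative.} On $\op{int}(\C_x)$ we have $L(x,w)=-(-g(w,w))^{p/2}$. Differentiating in $w$ gives
\[
\frac{\partial L}{\partial v}(x,v)=-\tfrac p2(-g(v,v))^{\frac p2-1}\cdot(-2g(v,\cdot))=p|v|_g^{p-2}v^\flat ,
\]
which is \eqref{eqk}. Evaluating at $v$ and using $g(v,v)=-|v|_g^2$ yields
\[
P(v)=-p|v|_g^p=pL(x,v).
\]
Hence $P(v)-L(x,v)=(p-1)L(x,v)$, which is the second equality of the lemma.

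\emph{Step 2: the supremum defining $H$ is attained at $w=v$.} For $w\notin\C_x$ we have $L(x,w)=+\infty$, so these $w$ contribute $-\infty$ and can be ignored. It therefore suffices to show that
\[
f(w):=P(w)+|w|_g^p
\]
is maximized over the convex cone $\C_x$ at $w=v$.

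The key fact is that $w\mapsto|w|_g$ is concave on $\C_x$. It is positively homogeneous and satisfies the reverse triangle inequality $|w_1+w_2|_g\ge|w_1|_g+|w_2|_g$ for future causal vectors; this follows from the reverse Cauchy–Schwarz inequality. Composing with the concave nondecreasing map $r\mapsto r^p$ on $[0,\infty)$ keeps concavity, and $P$ is linear. Hence $f$ is concave and continuous on $\C_x$.

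Moreover, $f$ is differentiable at the interior point $v$, with
\[
df_v=P-\frac{\partial L}{\partial v}(x,v)=0 .
\]
Concavity then gives, for every $w\in\C_x$ (boundary included),
\[
f(w)\le f(v)+df_v(w-v)=f(v).
\]
Thus $H(x,P)=f(v)=P(v)-L(x,v)$, which is the first equality.

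The only step requiring any care is the concavity of $|\cdot|_g^p$ on the closed cone, including null vectors. I would justify it by the reverse triangle inequality together with continuity of $|\cdot|_g$; everything else is routine calculus.
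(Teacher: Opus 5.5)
Your proof is correct, but it takes a different route from the paper's proof in Appendix \ref{proofs}.

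The paper starts from an arbitrary covector in $\op{int}(\C_x^*)$ and argues in four steps:
\begin{enumerate}[(1)]
\item it shows that $w\mapsto P(w)-L(x,w)\to-\infty$ as $|w|_h\to\infty$ on $\C_x$;
\item it obtains a maximizer in $\C_x$ by compactness;
\item it rules out by hand that this maximizer is $0$ or lies on $\partial\C_x\setminus\{0\}$, using explicit perturbations that exploit $p<1$ (the infinite slope of $r\mapsto r^p$ at $0$, and the $\varepsilon^{p/2}$ gain when a null vector is pushed into the timelike cone);
\item it uses the first-order condition $P=\frac{\partial L}{\partial v}(x,v)$ to identify the maximizer.
\end{enumerate}
To conclude that this maximizer is the \emph{given} $v$, the paper implicitly needs injectivity of the Legendre transform (Proposition \ref{pro2}).

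Your argument replaces all of this with one structural fact: $|\cdot|_g^p$ is concave on the convex cone $\C_x$, which follows from the reverse triangle inequality. This makes the interior critical point $v$ automatically a global maximizer. You therefore need no existence or compactness argument, no separate treatment of $w=0$ or null $w$, and no appeal to injectivity of $\mathcal{L}$.

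What the paper's route buys in exchange is generality of the input covector. It works directly for every covector in $\op{int}(\C_x^*)$ and shows the supremum is finite and attained in the interior there. This yields surjectivity of $\mathcal{L}$ onto $\op{int}(\C^*)$ and the formula for $H$ on all of $\op{int}(\C^*)$, as used in Remark \ref{reme}, without invoking any Lorentzian inequality. With your approach, that part would instead come from the explicit formula \eqref{eqk} in Proposition \ref{pro2}.
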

\begin{proof}
    See Appendix \ref{proofs}.
\end{proof}

\begin{remark}[The Hamiltonian and the geodesic flow]\rm \label{reme}
	From the above lemma, we conclude that $H$ is smooth on the open set $\op{int}(\C^{*})$, and satisfies the identity $H({\cal L}(x,v))=\frac{\partial L}{\partial v}(x,v)(v)-L(x,v)$. It is therefore well-known (\cite{Fathi}, Proof of Theorem 2.6.4) that the Euler-Lagrange flow $\phi_t$ (i.e.\ the geodesic flow on $\op{int}(\C)$) is conjugate, via the diffeomorphism ${\cal L}:\op{int}(\C)\to \op{int}(\C^*)$, to the Hamiltonian flow $\psi_t$ on $\op{int}(\C^{*})$. The latter is understood with respect to its canonical symplectic structure as an open subset of the cotangent bundle.
\end{remark}

\subsection{The Lax-Oleinik semigroup}

In this subsection, we define the (Lorentzian) Lax-Oleinik/Hopf-Lax semigroup, and we study elemantary properties.

\begin{definition}[Lax-Oleinik semigroup]\rm\label{def2}
	The \emph{forward Lax-Oleinik semigroup} is the family of maps $(T_t)_{ t> 0}$, defined on the space of functions $u:M\to \overbar \R$ by
	\begin{align*}
	T_tu:M\to \overbar \R,\ T_tu(y):=\inf\{u(x)+c_t(x,y)\mid x\in M\},
	\end{align*}
	where we use the convention $-\infty+\infty:=+\infty$.
	The \emph{backward Lax-Oleinik semigroup} is the family of maps $(\hat T_t)_{t> 0}$, defined on the space of functions $u:M\to \overbar \R$ by
	\begin{align*}
	\hat T_tu:M\to \overbar \R,\ \hat T_tu(x):=\sup\{u(y)-c_t(x,y)\mid y\in M\},
	\end{align*}
	where we use the convention $\infty-\infty:=-\infty$.
	If $u:M\to \overbar \R$ is any function, we also write
	\begin{align*}
	Tu:&\ (0,\infty)\times M\to \overbar \R, \ (t,y)\mapsto T_tu(y),
	\end{align*}
	and we use the analogous notation $\hat Tu$.

    For convenience, we also set $T_0u=\hat T_0u:=u$.
\end{definition}

\begin{remark}[About the convention]\rm
    To stay away from computations involving $\pm \infty \mp \infty$, let us mention that the definition of the Lax-Oleinik semigroup can equivalently be given as in the introduction:
    \begin{align*}
	T_tu:M\to \overbar \R,\ T_tu(y):=\inf\{u(x)+c_t(x,y)\mid x\in J^-(y)\}.
	\end{align*}
    In this case, $c_t(x,y)$ is always finite.
\end{remark}

\begin{lemma}[Semigroup property and composition]\label{semigroup}
	\begin{enumerate}[(i)]
		\item $(T_t)$ and $(\hat T_t)$ are semigroups, that is, $T_{t+s}=T_t\circ T_s$ and $\hat T_{t+s}=\hat T_t\circ \hat T_s$ for all $t,s\geq 0$.
		\item It holds $\hat T_t\circ T_t u\leq u$ and $T_t\circ \hat T_t u \geq u$ for all $t\geq 0$ and $u:M\to \overbar \R$.
	\end{enumerate}
\end{lemma}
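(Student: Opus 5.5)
The semigroup property (i) reduces to the triangle-type identity for the minimal action,
\[
c_{t+s}(x,z)=\inf_{y\in M}\bigl(c_s(x,y)+c_t(y,z)\bigr),
\]
with the convention $+\infty+a=+\infty$. I will prove this from the definition of $c_t$ as an infimum over curves. For ``$\le$'', concatenate a curve on $[0,s]$ from $x$ to $y$ with a time-shifted curve on $[s,s+t]$ from $y$ to $z$. The actions add, and the concatenation is admissible for $c_{t+s}(x,z)$. For ``$\ge$'', take any curve $\gamma:[0,t+s]\to M$ from $x$ to $z$ and put $y:=\gamma_s$. Splitting the integral gives $\mathbb L(\gamma)\ge c_s(x,y)+c_t(y,z)$. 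If $\gamma$ has infinite action, the inequality is trivial. Alternatively, I could use Corollary~\ref{a} together with the reverse triangle inequality for $\ell$ and the concavity of $(a,b)\mapsto -(a^p s^{1-p}+b^p t^{1-p})$. The curve argument is cleaner and avoids the causality case distinctions, since $c_s(x,y)=+\infty$ whenever $y\notin J^+(x)$.

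With this identity, I compute
\[
T_t(T_su)(z)=\inf_y\inf_x\bigl(u(x)+c_s(x,y)+c_t(y,z)\bigr).
\]
I then swap the two infima and apply the identity to obtain $T_{t+s}u(z)$. The case $t=0$ or $s=0$ is trivial by the convention $T_0u=u$. The main care needed is the extended-real arithmetic: an inner infimum $T_su(y)$ may equal $-\infty$ while $c_t(y,z)=+\infty$. The convention $-\infty+\infty=+\infty$ makes the expression $u(x)+c_s(x,y)+c_t(y,z)$ equal to $+\infty$ exactly when some summand is $+\infty$. So pulling the infimum over $x$ out of the sum with $c_t(y,z)$ is legitimate: if $c_t(y,z)=+\infty$, both sides are $+\infty$, and if $c_t(y,z)$ is finite, $\inf_x(\cdot)+c_t$ commutes. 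The backward statement $\hat T_{t+s}=\hat T_t\circ\hat T_s$ follows symmetrically with suprema and the convention $\infty-\infty=-\infty$.

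For (ii), fix $x$. By definition,
\[
\hat T_t(T_tu)(x)=\sup_y\bigl(T_tu(y)-c_t(x,y)\bigr).
\]
For $y\notin J^+(x)$, the term is $-\infty$ by convention, regardless of the value of $T_tu(y)$. For $y\in J^+(x)$, $c_t(x,y)$ is finite by Corollary~\ref{a}, and taking $x$ as a competitor in the infimum defining $T_tu(y)$ gives $T_tu(y)\le u(x)+c_t(x,y)$. This yields $T_tu(y)-c_t(x,y)\le u(x)$ in all cases, including $u(x)=\pm\infty$ (when $u(x)=+\infty$ there is nothing to show). Taking the supremum gives $\hat T_t T_tu\le u$, and the other inequality is symmetric.

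The only real obstacle is bookkeeping the $\pm\infty$ conventions consistently in the interchange of infima. Everything else is formal.
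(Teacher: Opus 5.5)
Your argument is correct, and it is organized differently from the paper's. The paper works directly with the semigroups through $\ep$-almost optimal points. For $T_t\circ T_s\le T_{t+s}$ it takes $x$ nearly optimal for $T_{t+s}u(y)$ and splits a maximizing geodesic from $x$ to $y$ at time $s$. Since that geodesic is $L$-minimizing (Lemma~\ref{lt}, Corollary~\ref{a}), this gives the exact splitting $c_{t+s}(x,y)=c_s(x,\gamma_s)+c_t(\gamma_s,y)$. For the reverse inequality it chooses two nested $\ep$-optimal points and concatenates.

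You instead isolate the inf-convolution identity $c_{t+s}(x,z)=\inf_y\bigl(c_s(x,y)+c_t(y,z)\bigr)$. You obtain its nontrivial half by splitting an arbitrary competing curve at time $s$, and then derive the semigroup law by exchanging infima. This buys two things. First, you never need existence of maximizing geodesics, so the argument works for any autonomous action defined as an infimum over curves. Second, by working consistently with the convention $-\infty+\infty=+\infty$, you cover the cases $T_{t+s}u(y)=-\infty$ or $T_su(x)=-\infty$. There the paper's choice of $x$ with $T_{t+s}u(y)+\ep\ge u(x)+c_{t+s}(x,y)$ has to be reinterpreted, using an arbitrarily negative bound instead of $+\ep$. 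The paper's version is shorter given Corollary~\ref{a}.

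One convention-dependent step is left implicit: after the exchange you also pull $u(x)$ out of $\inf_y$, i.e.\ $\inf_y\bigl(u(x)+w_y\bigr)=u(x)+\inf_y w_y$ with $w_y:=c_s(x,y)+c_t(y,z)$. This holds also for $u(x)=-\infty$, since both sides are $-\infty$ unless every $w_y=+\infty$. So nothing is missing, but it deserves a line.
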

\begin{proof}
	(ii) is trivial from the definition (cf.\ \cite{Ambrosio/Gigli}), so we only prove (i) for the forward semigroup. The statement is trivial if $t=0$ or $s=0$. Thus, fix $t,s> 0$ and $y\in M$.
    
    For $\ep>0$, let $x\in J^-(y)$ be such that
    \begin{align}
        T_{t+s}u(y) +\ep \geq u(x)+c_{t+s}(x,y). \label{eqzo}
    \end{align}
    If $\gamma:[0,t+s]\to M$ denotes a maximizing geodesic from $x$ to $y$, it follows that
    \[
        u(x)+c_{t+s}(x,y) =
        u(x) +c_s(x,\gamma_s)+c_t(\gamma_s,y)
        \geq 
        T_su(\gamma_s) +c_t(\gamma_s,y)
        \geq T_t \circ T_su(y).
    \]
    Combining with \eqref{eqzo}, this shows $T_t\circ T_s \leq T_{t+s}$ since $\ep$ was arbitrary. Conversely, for $\ep>0$, fix $x\in J^-(y)$ with
    \[
        T_{t}\circ T_su(y) +\ep \geq T_su(x)+c_{t}(x,y),
    \]
    and let $z\in J^-(x)$ with
    \[
        T_su(x)+\ep \geq u(z) +c_s(z,x).
    \]
    Then it follows easily from the definition of the minimal action that
    \[
        T_t \circ T_su(y) +2\ep \geq u(z)+c_t(z,x)+c_s(x,y) \geq u(z)+c_{t+s}(z,y) \geq T_{t+s}u(y),
    \]
    which proves the other implication since $\ep$ was arbitrary.
\end{proof}

\section{A local Lax-Oleinik semigroup}

In order to prove the main result, it is convenient to study some local type of the Lax-Oleinik semigroup. This method has already been used in the case of Tonelli Lagrangians systems in \cite{Fathi/Figalli/Rifford,Cannarsa/Cheng/Fathi}. Although the choice of $V_0$ in the following definition is quite arbitrary, the proof of the main theorem consists in showing that the backward semigroup $\hat T_su_x$ locally conincides with the local one. Hence, all properties we are able to prove for the local version easily extend to the global version.

Let us first describe the general setting of functions that we are studying in this section.

\begin{definition}[Local Lax-Oleinik semigroup]\rm
    Let $s> 0$, $V_0\subseteq M$ be an open set, and $f:V_0\to \R$ be a function. We define the \emph{local backward Lax-Oleinik semigroup} of $f$ as
    \begin{align}
        \hat T_s^{V_0}f:V_0\to \R\cup\{\infty\},\  \hat T_s^{V_0}f(y):=\sup\{f(z)-c_s(y,z)\mid z\in V_0\}. \label{eqzaa}
    \end{align}
\end{definition}

\begin{remark}\rm
    We use the variable $s$ instead of $t$. The reason behind this is consistency with Theorem \ref{main}: The functions $f$ play the role of the functions $T_tu_x$ of Theorem \ref{main}.

    Of course, one could also consider functions $f:V_0\to \overbar \R$ using the same conventions as in Definition \ref{def2}. Since the local semigroup is merely a technical tool, we assume for simplicity that $f$ is real-valued.
\end{remark}

\begin{definition}[Causally convex sets]
    A set $V\subseteq M$ is \emph{causally convex} if all future causal curves that start and end in $V$ remain within $V$ for all times.
\end{definition}

\begin{remark}[The general framework]\rm\label{rmzd}
    If $K_0\subseteq \op{int}(\C^*)$ is a compact set and $V_0\Subset M$ is a precompact open and causally convex set, denote by $\mathcal{F}=\mathcal{F}(K_0,V_0)$ the family of locally semiconcave functions $f:V_0\to \R$ satisfying:
    \begin{align}
        \{(y,p)\in T^*M\mid  y\in V_0,\ f\in \mathcal{F}, p\in \partial^+f(y)\}\Subset K_0. \label{eqzd}
    \end{align}
    In particular, for any future causal curve $\gamma:[a,b]\to V_0$, the composition $f\circ \gamma$ is Lipschitz (hence a.e.\ differentiable) with
    \begin{align*}
        \frac{d}{dt}(f\circ \gamma)(t)\leq -C_0\, |\dot \gamma_t|_h
    \end{align*}
    for a.e.\ $t$ and some constant $C_0=C_0(K_0,V_0)$ (recall our convention of $g=(-,+,...,+)$). 
\end{remark}

\subsection{Elementary properties and semiconvexity}

In this section we study elementary properties of the local Lax-Oleinik semigroup for functions in the set $\mathcal{F}(K_0,V_0)$, and we then restrict our attention on the behaviour on some open set $V\Subset V_0$ and for sufficiently small $s$. We prove existence and uniqueness of optimal points $z$ in \eqref{eqzaa} under suitable assumptions.  We conclude this section proving local semiconvexity of the local Lax-Oleinik semigroup.

\begin{lemma}[Locus and uniqueness of optimal points]\label{lbb}
    Let $K_0,V_0$ and $\mathcal{F}$ be as in Remark \ref{rmzd}, and let $f\in \mathcal{F}$, $y\in V_0$ and $s>0$. Then the following properties hold:
    \begin{enumerate}[(i)]
    \item 
    It holds
    \begin{align}
        \hat T_s^{V_0}f(y)>f(y). \label{eqac}
    \end{align}
    
    \item If $z\in V_0$ is optimal for $\hat T_s^{V_0}f(y)$, then $z\in I^+(y)$. 
    
    \item If moreover $\hat T_s^{V_0}f$ is differentiable at $y$, then there exists at most one optimal point $z$ and it satisfies
    \begin{align*}
        d_y(\hat T^{V_0}_sf)=\frac{\partial L}{\partial v}(y,\dot \gamma_0) \quad \text{ and } \quad \frac{\partial L}{\partial v}(z,\dot \gamma_s)\in \partial^+f(z),
    \end{align*}
    where $\gamma:[0,s]\to M$ is the unique(!) maximizing geodesic connecting $y$ to $z$.   
    \end{enumerate}
\end{lemma}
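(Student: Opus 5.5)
For (i) I will compare with points slightly to the timelike future of $y$. Fix a future timelike vector $v\in T_yM$ and set $z_\ep:=\exp^g_y(\ep v)$, which lies in $V_0$ for small $\ep>0$. Then $\ell(y,z_\ep)\geq \ep|v|_g$, so
\[
    -c_s(y,z_\ep)=s^{1-p}\ell(y,z_\ep)^p\geq c\,\ep^p
\]
for some $c>0$. Next I need a lower bound on $f(z_\ep)$. Since $f$ is locally semiconcave and all its super-differentials lie in the compact set $K_0$, $f$ is locally Lipschitz near $y$ with a Lipschitz constant bounded in terms of $K_0$. Hence $f(z_\ep)\geq f(y)-C\ep$. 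Combining the two bounds,
\[
    \hat T_s^{V_0}f(y)\geq f(y)+c\ep^p-C\ep,
\]
and this is strictly larger than $f(y)$ for small $\ep$, because $p<1$.

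For (ii), let $z$ be optimal. If $z\notin J^+(y)$, the value at $z$ is $-\infty$, which contradicts (i). So $z\in J^+(y)$. Suppose $z\notin I^+(y)$. Then $\ell(y,z)=0$ and the value at $z$ is just $f(z)$. Take a future causal curve from $y$ to $z$. By causal convexity it stays in $V_0$, so Remark \ref{rmzd} shows that $f$ is non-increasing along it, giving $f(z)\leq f(y)$. This again contradicts (i), so $z\in I^+(y)$. The case $z=y$ is covered by the same contradiction.

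For (iii), let $z$ be optimal. The function $y'\mapsto f(z)-c_s(y',z)$ lies below $\hat T^{V_0}_sf$ near $y$ and touches it at $y$. Since $z\in I^+(y)$, Theorem \ref{A}(a) shows that $c_s$ is locally semiconcave near $(y,z)$, so $g:=c_s(\cdot,z)$ is semiconcave near $y$. Moreover, $g(y')\geq f(z)-\hat T_s^{V_0}f(y')$ with equality at $y'=y$. The right-hand side is differentiable at $y$, so $-d_y\hat T^{V_0}_sf\in\partial^-g(y)$. A semiconcave function with a nonempty sub-differential at a point is differentiable there; hence $g$ is differentiable at $y$ with $d_yg=-d_y\hat T^{V_0}_sf$. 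By Theorem \ref{A}(b), projected onto the $x$-component, the superdifferential of $g$ at $y$ is the convex hull of the covectors $-\frac{\partial L}{\partial v}(y,\dot\gamma_0)$ over maximizing geodesics $\gamma$ from $y$ to $z$. Differentiability forces this set to be a single point. Since the Legendre transform is injective (Proposition \ref{pro2}), all these geodesics share the same initial velocity, so $\gamma$ is unique and $d_y\hat T^{V_0}_sf=\frac{\partial L}{\partial v}(y,\dot\gamma_0)$. This also gives uniqueness of $z$: $\dot\gamma_0$ is determined by the differential of $\hat T^{V_0}_sf$ at $y$ through the inverse Legendre transform, and then $z=\gamma_s$ is determined.

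It remains to identify a super-differential of $f$ at $z$. Optimality of $z$ gives
\[
    f(z')\leq f(z)+c_s(y,z')-c_s(y,z)\quad\text{for } z'\in V_0.
\]
Because the maximizing geodesic from $y$ to $z$ is unique, Theorem \ref{A}(b) makes $\C$, and hence $c_s(y,\cdot)$, differentiable at $z$ with differential $\frac{\partial L}{\partial v}(z,\dot\gamma_s)$. Therefore this covector belongs to $\partial^+f(z)$.

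I expect the main obstacle to be the careful passage from the joint superdifferential in Theorem \ref{A}(b) to partial (super)differentials in a single variable. The step I would justify first is that, for a locally semiconcave function, the partial superdifferential contains the projection of the joint one, so that a singleton on one side transfers to the other.
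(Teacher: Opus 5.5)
Your proof is correct. Parts (i) and (iii) are essentially the paper's argument. In (iii) the paper is slightly more direct. Freezing one variable in the joint super-differential of $c_s$ at $(y,z)$ and using optimality gives $\frac{\partial L}{\partial v}(y,\dot\gamma_0)\in\partial^-\hat T_s^{V_0}f(y)$ and $\frac{\partial L}{\partial v}(z,\dot\gamma_s)\in\partial^+f(z)$ for every maximizing $\gamma$. Differentiability at $y$ then forces the first covector to equal $d_y\hat T_s^{V_0}f$. Your detour through differentiability of $c_s(\cdot,z)$ is equivalent. The obstacle you anticipate only requires the trivial inclusion obtained by freezing a variable in the defining inequality. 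For the last step you also do not need uniqueness of $\gamma$: super-differentiability of $c_s(y,\cdot)$ at $z$ already suffices.

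The genuine difference is in (ii). The paper takes the null geodesic from $y$ to $z$ and builds a variation with field $t\xi_t$, where $g(\xi,\dot\gamma)<0$. It shows the varied curves are future timelike with $\ell(y,\sigma(r,s))\geq s\sqrt{ar}$. Thus $-c_s$ gains order $r^{p/2}$, while $f$ loses only $O(r)$ by its local Lipschitz bound near $z$. You instead exploit the structure of $\mathcal{F}$. Since $\partial^+f$ lies in $K_0\subseteq\op{int}(\C^*)$ and $V_0$ is causally convex, $f$ is non-increasing along future causal curves in $V_0$ (Remark \ref{rmzd}). Hence a null-related $z$ has value $f(z)\leq f(y)<\hat T_s^{V_0}f(y)$.

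Your route is shorter and quantitative: it bounds the value at any $z$ with $\ell(y,z)\leq\delta$ by $f(y)+s^{1-p}\delta^p$. Combined with a uniform version of (i), this would produce the margin $\delta_s$ of Theorem \ref{lzb} without the compactness argument. The paper's route uses neither the cone condition nor causal convexity, only local Lipschitz continuity of $f$. So it shows that optimal points avoid the null future of $y$ for any locally Lipschitz $f$, isolating the mechanism in the cost itself: the square-root growth of $\ell$ when one moves off the null cone.
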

\begin{proof}
    \begin{enumerate}[(i)]
    \item 
     Let $\gamma:[0,\ep_0]\to M$ be a future timelike geodesic with $\gamma_0=y$.  
    Since $f$ is locally semiconcave (hence locally Lipschitz) on some neighbourhood of $y$, there is $C>0$ such that   
    \begin{align}
        |f(\gamma_\ep)-f(y)|
        \leq C\ep \label{eqbd}
    \end{align}
    for small $\ep>0$.
    
    Moreover, since $\gamma$ is a future timelike geodesic, its "speed" $|\dot \gamma_t|_g$ is a positive constant.
    Thus, $\ell(y,\gamma_\ep)\geq \ep |\dot \gamma|_g$. Hence
    \begin{align}
       -c_s(y,\gamma_\ep)
        \geq s^{1-p} (\ep |\dot \gamma|_g)^p. \label{eqbe}
    \end{align}
    Since $p<1$, we can combine inequalities \eqref{eqbd} and \eqref{eqbe} to obtain 
    \[
        \hat T^{V_0}_sf(y) \geq f(\gamma_\ep)-c_s(y,\gamma_\ep)> f(y)
    \]
    for sufficiently small $\ep$.
    
        \item   Let $z\in V_0$ be optimal. Suppose, for contradiction, that $\ell(y,z)=0$, and let $\gamma:[0,s]\to M$ be a future causal null geodesic connecting $y$ with $z$. By Part (i), $y\neq z$, so $\gamma$ is non-constant. Hence, we can choose $\xi_s\in T_{z}M$ with $g_z(\dot \gamma_s,\xi_s)<0$. Let $\xi:[0,s]\to TM$ be the parallel transport of $\xi_s$ along $\gamma$, and consider a smooth variation $\sigma:(-\ep,\ep)\times [0,s]\to M$ of $\gamma$ with variational vector field $\tilde \xi_t=t\xi_t$, fixing $\sigma(r,0)=\gamma_0=y$ for all $r\in (-\ep,\ep)$. Our goal is to show that, for $r>0$ small enough,
    \begin{align}
        f(\sigma(r,s))-c_s(y,\sigma(r,s))>f(z)-c_s(y,z), \label{eqx}
    \end{align}
    contradicting the optimality of $z$, thus proving the claim. 

    By the compatibility of the connection with the metric, we compute for all $t\in [0,s]$
    \begin{align*}
        \frac{d}{dr}\Big\vert_{r=0} g(\partial_t \sigma(r,t),\partial_t \sigma(r,t))
        &= 2 g\bigg(\frac{D}{dr}\Big\vert_{r=0}\frac{\partial \sigma}{\partial t}(r,t),\frac{\partial \sigma}{\partial t}(0,t)\bigg)
        \\[10pt]
        &=
        2 g\bigg(\frac{D}{dt}\frac{\partial \sigma}{\partial r}(0,t),\dot \gamma_t\bigg)
        \\[10pt]
        &=
        2 g\bigg(\frac{D}{dt}(t\xi_t),\dot \gamma_t\bigg)
        \\[10pt]
        &= 2 g(\xi_t,\dot \gamma_t)
        \\[10pt]
        &= 2 g(\xi_s,\dot \gamma_s)=:-2a<0,
    \end{align*}
    where in the step to the last line we used the fact that $\xi$ and $\dot \gamma$ are parallel along $\gamma$.
    Thus, Taylor expansion and the fact that $\gamma$ is a null geodesic yield
    \begin{align}
        g(\partial_t \sigma(r,t),\partial_t \sigma(r,t))\leq -|\dot \gamma_t|^2_g -2ar +O(r^2)\leq -ar \label{hhhh}
    \end{align}
    for small values $r>0$ and all $t\in [0,s]$. In particular, $\sigma(r,\cdot)$ is either future timelike or past timelike for small $r$.
    Since $y\neq z$ and $z\in J^+(y)$ we must have $z\notin J^-(y)$. Thus $z\approx \sigma(r,s)\notin J^-(y)$ for small $r$, so that $\sigma(r,\cdot)$ is in fact future timelike for small $r$.
    This, together with
    \begin{align*}
        \ell_g(\sigma(r,\cdot))\geq s \sqrt{ar},
    \end{align*}
    as follows from \eqref{hhhh}, gives
    \begin{align*}
        \ell(y,\sigma(r,s))\geq  s\sqrt{ar}
    \end{align*}
    and therefore
    \begin{align}
        -c_s(y,\sigma(r,t))+c_s(y,z)= s^{1-p} \ell(y,\sigma(r,s))^p
        \geq s (ar)^\frac p2 \label{hwdauidjaias}
    \end{align}
    for small $r$.
    On the other hand, since $\partial_r \sigma(0,s)=s\xi_s$, it holds $d_h(\sigma(r,s),z)\leq 2s|\xi_0|_hr$, if $r$ is small. Thus, denoting by $L$ a local Lipschitz constant of $f$ near $z$, it follows for small $r$ that
    \begin{align*}
        |f(\sigma(r,s))-f(z)| \leq 2Ls\, |\xi_0|_hr.
    \end{align*}
    Combining with \eqref{hwdauidjaias}, we see that that \eqref{eqx} holds for small $r$.
    \item 
    Let $z\in V_0$ be any optimal point, so that $z\in I^+(y)$ by Part (ii). Let $\gamma:[0,s]\to M$ be any maximizing geodesic connecting $y$ to $z$. Theorem \ref{A} states that $c_s$ is super-differentiable at $(y,z)$ with
    \[
        \Big(-\frac{\partial L}{\partial v}(y,\dot \gamma_0),\frac{\partial L}{\partial v}(z,\dot \gamma_s)\Big) \in \partial^+ c_s(y,z).
    \]
    It is then a simple consequence of the definition in \eqref{eqzaa} and the optimality of $z$ that
    \begin{align*}
        \frac{\partial L}{\partial v}(y,\dot \gamma_0)\in \partial^- \hat T_s^{V_0}f(y) \quad \text{ and } \quad \frac{\partial L}{\partial v}(z,\dot \gamma_s)\in \partial^+f(z).
    \end{align*}
     In particular, since $\hat T_s^{V_0}f$ is differentiable at $y$, its derivative must coincide with the unique sub-differential, and hence
    \[
        d_y(\hat T^{V_0}_sf) = \frac{\partial L}{\partial v}(y,\dot \gamma_0).
    \]
    It remains to prove the uniqueness of the maximizing curve $\gamma$ (and hence also of $z$). Since the Legendre transform is a diffeomorphism, the preceding identity shows that $\dot \gamma_0$, and hence $\gamma$, is uniquely determined by the differential of $\hat T^{V_0}_sf$ at $y$. This proves the uniqueness, since $z$ was any optimal point and $\gamma$ was any maximizing geodesic from $y$ to $z$. \qedhere
    \end{enumerate}
\end{proof}

\begin{lemma}[Uniform control of maximizers]\label{lk}
    Let $K_0,V_0$ and $\mathcal{F}$ as in Remark \ref{rmzd}, and let $V\Subset V_0$. Then there exist constants $C$ and $s_0>0$ such that, for all $f\in \mathcal{F}$, $s\in (0,s_0]$ and $y\in V$, the following inequalities hold
    \begin{align}
        \hat T^{V_0}_sf(y)
        \geq f(y)> \sup\{f(z)-c_s(y,z)\mid z\in V_0,\ d_{h}(y,z)\geq Cs^{1-p}\}. \label{eqzh}
    \end{align}
    In fact, $C$ and $s_0$ can be chosen such that the following stronger assertion holds: If $d_h(y,z)\geq Cs^{1-p}$ for some $z\in V_0$, then
    \[
        f(y) \geq f(z)-c_s(y,z) +\frac{C_0}{2}d_h(y,z),
    \]
    where $C_0=C_0(K_0,V_0)$.
\end{lemma}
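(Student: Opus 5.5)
The plan is to combine the monotonicity of $f$ along future causal curves from Remark \ref{rmzd} with an upper bound of the Lorentzian length by the $h$-length inside the precompact set $V_0$.

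\textbf{Step 1: reductions.} The first inequality $\hat T^{V_0}_sf(y)\geq f(y)$ is immediate by choosing $z=y$ in \eqref{eqzaa}, since $c_s(y,y)=0$. For the stronger assertion, fix $z\in V_0$ with $d_h(y,z)\geq Cs^{1-p}$. If $z\notin J^+(y)$, then $c_s(y,z)=+\infty$ and there is nothing to prove. So assume $z\in J^+(y)$.

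\textbf{Step 2: bound the gain from $-c_s$.} By Corollary \ref{a}, pick a maximizing geodesic $\gamma:[0,1]\to M$ from $y$ to $z$. Since $V_0$ is causally convex, $\gamma$ stays in $V_0$. Because $\overline{V_0}$ is compact, there is $A=A(V_0)>0$ with $|v|_g\leq A|v|_h$ for all future causal $v\in T_xM$, $x\in\overline{V_0}$. Writing $L_h:=\int_0^1|\dot\gamma_t|_h\,dt$, we get
\[
\ell(y,z)=\ell_g(\gamma)\leq A L_h \qquad\text{and}\qquad L_h\geq d_h(y,z).
\]
Hence
\[
-c_s(y,z)=s^{1-p}\ell(y,z)^p\leq s^{1-p}A^pL_h^p .
\]

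\textbf{Step 3: bound the loss from $f$.} By Remark \ref{rmzd}, $f\circ\gamma$ is Lipschitz with derivative at most $-C_0|\dot\gamma_t|_h$ almost everywhere. Integrating gives
\[
f(z)\leq f(y)-C_0L_h .
\]
Therefore
\[
f(z)-c_s(y,z)\leq f(y)-C_0L_h+s^{1-p}A^pL_h^p .
\]
It now suffices to ensure $s^{1-p}A^pL_h^p\leq \tfrac{C_0}{2}L_h$. This is equivalent to $L_h^{1-p}\geq (2A^p/C_0)\,s^{1-p}$, that is, $L_h\geq (2A^p/C_0)^{1/(1-p)}s=:C's$.

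\textbf{Step 4: choice of constants.} Take $s_0:=1$ and $C:=C'$. For $s\leq 1$ we have $s\leq s^{1-p}$, so
\[
L_h\geq d_h(y,z)\geq Cs^{1-p}\geq C's .
\]
This yields
\[
f(z)-c_s(y,z)\leq f(y)-\tfrac{C_0}{2}L_h\leq f(y)-\tfrac{C_0}{2}d_h(y,z),
\]
which is the stronger assertion. In particular, every $z$ in the supremum of \eqref{eqzh} gives a value at most $f(y)-\tfrac{C_0}{2}Cs^{1-p}$. This is a uniform bound strictly below $f(y)$, so the strict inequality in \eqref{eqzh} follows, with the convention $\sup\emptyset=-\infty$.

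All constants depend only on $K_0$ and $V_0$, hence are uniform in $f\in\mathcal F$ and $y\in V$. The only delicate point is Step 2: it needs causal convexity to keep the maximizing geodesic inside $V_0$, where both the bound $|v|_g\leq A|v|_h$ and the monotonicity estimate of Remark \ref{rmzd} hold. The rest is the elementary comparison between $L_h^p$ and $L_h$ at scale $s$.
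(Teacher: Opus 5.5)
Your proof is correct. Its skeleton matches the paper's: take a maximizing geodesic from $y$ to $z$, keep it in $V_0$ by causal convexity, integrate the estimate of Remark \ref{rmzd} along it, and absorb the cost term.

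The one real difference is how you bound $-c_s(y,z)=s^{1-p}\ell(y,z)^p$.

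\emph{The paper's route.} It uses the crude uniform bound $\ell(y,z)\le M:=\sup\{\ell(y',z')\mid y',z'\in\overline{V_0},\ z'\in J^+(y')\}$. This gives $-c_s(y,z)\le s^{1-p}M^p$. Together with $f(z)-f(y)\le -C_0\,d_h(y,z)$, the threshold $d_h(y,z)\ge Cs^{1-p}$ with $C=2M^p/C_0$ then follows at once.

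\emph{Your route.} You use the pointwise comparison $|v|_g\le A|v|_h$ on $\overline{V_0}$ to get $\ell(y,z)\le A L_h$. This makes the cost term sublinear in $L_h$. The conclusion then already holds when $d_h(y,z)\ge C's$, and the stated $s^{1-p}$-form follows with $s_0=1$ from $s\le s^{1-p}$.

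\emph{What each buys.} Your version proves a sharper localization, with $s_0$ independent of $V$: optimal points lie at $h$-distance $O(s)$. This is the natural scale, as maximizing $-C_0r+A^ps^{1-p}r^p$ over $r$ shows. The paper's version is marginally shorter and needs only the finiteness of $M$. Its $s^{1-p}$-scale also makes later absorptions automatic for small $s$, such as $Ls\le\frac{C_0C}{4}s^{1-p}$ in \eqref{eqdb}; with your linear threshold you would instead enlarge the constant.

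The paper's extra requirement $Cs_0^{1-p}\le d_h(V,\partial V_0)$ is not used within this lemma, so your choice $s_0=1$ is legitimate.
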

\begin{proof}
    The first inequality in \eqref{eqzh} is obvious; the second inequality follows from the more general second part of the lemma, which we shall prove now. 
    
    Set $C:=\frac{2M^p}{C_0}$, where 
    \begin{align*}
        M:=\sup\{\ell(y,z)\mid y,z\in \overbar V_0, z\in J^+( y)\}
    \end{align*}
    is finite thanks to the compactness of $\overbar V_0$. Now choose $s_0>0$ with $Cs_0^{1-p}\leq d_h(V,\partial V_0)$. Fix $f,s$ and $y$ as in the lemma, and let $z\in V_0$ be such that $d_h(y,z)\geq Cs^{1-p}$. 
    
    Let $\gamma:[0,s]\to M$ be a maximizing geodesic connecting $y$ to $z$. Since $V_0$ is causally convex, it follows that $\gamma_t\in V_0$ for all $t\in [0,s]$. In particular,  $\frac{d}{dt} (f\circ \gamma)(t)\leq -C_0|\dot \gamma_t|_h$ for a.e.\ $t$. Thus
    \begin{align*}
        f(z)-f(y)=\int_0^s \frac{d}{dt} (f\circ \gamma)(t)\, dt
        \leq
        -C_0\, d_h(y,z).
    \end{align*}
    Thus, using $d_h(y,z)\geq \frac 12 d_h(y,z)+\frac{M^p}{C_0}s^{1-p}$, we obtain
    \[
        f(z)-c_s(y,z)
        \leq 
        f(y)-C_0\, d_h(y,z)+s^{1-p}M^p
        \leq f(y) -\,\frac{C_0}{2} d_h(y,z). \qedhere
    \]
\end{proof}

\begin{corollary}[Existence of optimal points] \label{c1}
    Let $K_0,V_0$ and $\mathcal{F}$ be as in Remark \ref{rmzd}, and let $V\Subset V_0$. Then there exists $s_0>0$ such that, for any $f\in \mathcal{F}$, $s\in (0,s_0]$ and $y\in V$, the supremum in the definition of $\hat T_s^{V_0}f(y)$ is attained at some $z\in V_0$, and necessarily $z\in I^+(y)$. 
\end{corollary}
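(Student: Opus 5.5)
We take $s_0$ and $C$ from Lemma \ref{lk}, shrinking $s_0$ if needed, and argue by compactness. Fix $f\in\mathcal{F}$, $s\in(0,s_0]$ and $y\in V$. By \eqref{eqzh},
\[
\hat T^{V_0}_sf(y)=\sup\{f(z)-c_s(y,z)\mid z\in V_0,\ d_h(y,z)< Cs^{1-p}\}.
\]
By Lemma \ref{lbb}(i) we have $\hat T^{V_0}_sf(y)>f(y)$. This excludes every $z$ with $z\notin J^+(y)$, since those give the value $-\infty$. Moreover, the stronger form of Lemma \ref{lk} shows that every $z$ with $d_h(y,z)\geq Cs^{1-p}$ gives a value at most $f(y)$. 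Hence the supremum may be restricted to the set
\[
K:=\{z\in J^+(y)\mid d_h(y,z)\leq Cs^{1-p}\}.
\]
Because $h$ is complete, the closed $h$-ball is compact, and $J^+$ is closed by Theorem \ref{asdfghj}. Therefore $K$ is compact. The choice $Cs_0^{1-p}\le d_h(V,\partial V_0)$ ensures that $K\subseteq V_0$; shrinking $s_0$ slightly so that the inequality is strict makes $K$ lie in a compact subset of $V_0$ at positive distance from $\partial V_0$.

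Next, take a maximizing sequence $z_k\in K$. After passing to a subsequence, $z_k\to z\in K\subseteq V_0$. On $K$ the map $z\mapsto -c_s(y,z)=s^{1-p}\ell(y,z)^p$ is continuous, by Theorem \ref{A}(a) applied to the closed set $J^+$. The function $f$ is locally semiconcave, hence continuous on $V_0$. Consequently $f(z_k)-c_s(y,z_k)\to f(z)-c_s(y,z)$, and the limit equals the supremum. So the supremum is attained at $z$, and in particular $\hat T^{V_0}_sf(y)<\infty$. Lemma \ref{lbb}(ii) then gives $z\in I^+(y)$.

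The only delicate point is making sure the limit point stays inside $V_0$, where $f$ is defined and continuous, rather than escaping to $\partial V_0$. This is exactly what the quantitative localization $d_h(y,z)<Cs^{1-p}\le d_h(V,\partial V_0)$ from Lemma \ref{lk} provides, with the strict inequality arranged by taking $s_0$ slightly smaller. Continuity of $\ell$ on $J^+$ in a globally hyperbolic spacetime is already contained in Theorem \ref{A}(a).
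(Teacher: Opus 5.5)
Your proof is correct and is essentially the paper's argument. You restrict the supremum via Lemma \ref{lk} to an $h$-bounded part of $J^+(y)$, extract a convergent maximizing subsequence using completeness of $h$ and closedness of $J^+$, pass to the limit by continuity of $f$ and of $c_s$ on $J^+$, and get $z\in I^+(y)$ from Lemma \ref{lbb}(ii); your explicit compact set $K\subseteq V_0$ simply spells out the step the paper states as $z_k\to z\in J^+(y)\cap V_0$.
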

\begin{proof}
     Fix $C,s_0>0$ as in Lemma \ref{lk}. Choosing $s_0$ even smaller, we may assume that $d_h(V,\partial V_0)\geq Cs_0^{1-p}$.
     
     Fix $(s,y)\in (0,s_0]\times V$, and let $z_k$ be a maximizing sequence in the definition of $\hat T^{V_0}_sf(y)$. Notice that obviously $z_k\in J^+(y)$. By Lemma \ref{lk} and the completeness of the metric $h$, it follows that, up to some (non-relabeled) subsequence, $z_k\to z\in J^+(y)\cap V_0$ (having used the closedness of $J^+$). The continuity of both $f$ on $V_0$ and $\C$ on $(0,\infty)\times J^+$ imply
    \begin{align*}
        \hat T_s^{V_0}f(y)
        =\lim_{k\to \infty} (f(z_k)-c_s(y,z_k))
        =
        f(z)-c_s(y,z).
    \end{align*}
    This shows that the supremum is attained at some $z$. The fact that $z$ must belong to $I^+(y)$ follows from Lemma \ref{lbb}(ii).
\end{proof}

The following theorem partly summarizes and generalizes the previous results; it states that that the optimal points $z$, as $y$ varies over $V$, remain uniformly bounded (Lemma \ref{lk}) and stay even uniformly bounded away from the null future of $y$. Since we already know that for each $f$, $s$ and $y$, optimal points must belong to the chronological future of $y$, the proof is carried out by a compactness argument.

\begin{theorem}[Uniform semiconvexity]\label{lzb}
    Let $K_0,V_0$ and $\mathcal{F}$ be as in Remark \ref{rmzd}, and let $V\Subset V'\Subset V_0$. Then there exist $C,s_0>0$ such that the following holds: Whenever $\mathcal{F}'\subseteq \mathcal{F}$ is a subset which is compact in the $C(\overbar V',\R)$-topology, then there exists a non-decreasing sequence of $(\delta_s)_{0<s\leq s_0}$ of positive numbers such that, for any $f\in \mathcal{F}',s\in (0,s_0]$ and $y\in V$, it holds
     \begin{align}
        \hat T^{V_0}_sf(y)>\sup\{f(z)-c_s(y,z)\mid z\in V_0,\ (d_h(y,z)\geq Cs^{1-p} \text{ or } \ell(y,z)\leq \delta_s)\}. \label{eqaee}
    \end{align}
    In particular, for each $f\in \mathcal{F}$ and $s\in (0,s_0]$, $\hat T_s^{V_0}f$ is locally semiconvex on $V$, and the family of functions
    \begin{align*}
        \{\hat T_s^{V_0}f\mid f\in \mathcal{F}',\ s\in [s_1,s_0]\}
    \end{align*}
    is even uniformly locally semiconvex on $V$ for any $s_1\in (0,s_0]$.
\end{theorem}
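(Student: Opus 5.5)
We take $C$ and $s_0$ from Lemma \ref{lk}, applied with $V'$ in place of $V$, and shrink $s_0$ so that Corollary \ref{c1} also holds on $V'$ and $Cs_0^{1-p}<d_h(V,\partial V')$. Lemma \ref{lk} then already handles the region $d_h(y,z)\geq Cs^{1-p}$. So the real content of \eqref{eqaee} is the null-separation condition $\ell(y,z)\leq\delta_s$.

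\emph{Defining $\delta_s$.} For $s\in(0,s_0]$ we set
\[
\delta_s':=\inf\{\ell(y,z)\mid f\in\mathcal{F}',\ y\in \overbar V,\ z \text{ optimal for } \hat T^{V_0}_sf(y)\}.
\]
We will show $\delta_s'>0$ by contradiction. Suppose there are $f_k\in\mathcal{F}'$, $y_k\in\overbar V$ and optimal $z_k$ with $\ell(y_k,z_k)\to0$. Since $d_h(y_k,z_k)\leq Cs^{1-p}$, the points $z_k$ lie in a compact subset of $V'$. Passing to a subsequence, we may assume $f_k\to f$ uniformly on $\overbar V'$, $y_k\to y$ and $z_k\to z$; the limit $f$ lies in $\mathcal{F}'$ because $\mathcal{F}'$ is compact.

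\emph{The limiting point is optimal and null-related.} Optimality passes to the limit by uniform convergence and by continuity of $c_s$ on $J^+$, used together with the closedness of $J^+$. Comparison with competitors $z'\in V_0$ far away is covered by Lemma \ref{lk}, and near competitors lie in $V'$, where the uniform convergence applies. Hence $z$ is optimal for $\hat T^{V_0}_sf(y)$. On the other hand, $\ell$ is continuous on $J^+$ in a globally hyperbolic spacetime, so $\ell(y,z)=0$. This contradicts Lemma \ref{lbb}(ii), which puts every optimal point in $I^+(y)$.

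The same compactness argument gives more than positivity of $\delta_s'$. It shows that $\sup\{f(z)-c_s(y,z)\mid \ell(y,z)\leq\delta,\ d_h(y,z)\leq Cs^{1-p}\}$ is strictly less than $\hat T^{V_0}_sf(y)$, uniformly over $f\in\mathcal{F}'$ and $y\in\overbar V$, once $\delta<\delta_s'$. To see this, note that a sequence violating the bound would produce, in the limit, an optimal point with $\ell(y,z)\leq \delta$. We then set $\delta_s:=\frac12\inf_{s'\in[s,s_0]}\delta'_{s'}$, which is non-decreasing in $s$. Positivity of this infimum requires repeating the contradiction argument with $s_k$ varying in $[s,s_0]$; this works because $c$ is continuous in $s$ as well. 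This uniformity step is the main point of care.

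\emph{Semiconvexity.} Let $y$ lie near $y_1\in V$. Every optimal $z$ for $y$ satisfies $\ell(y,z)\geq\delta_s$ and $d_h(y,z)\leq Cs^{1-p}$. Hence the set of relevant pairs $(y,z)$ is a compact subset of $I^+$, and the same holds uniformly for $s\in[s_1,s_0]$. For any $z$ we have
\[
\hat T^{V_0}_sf(y')\geq f(z)-c_s(y',z).
\]
Taking $z$ optimal for $y$, the right-hand side is a semiconvex function of $y'$ (since $-c_s(\cdot,z)$ is semiconvex), touching $\hat T^{V_0}_sf$ from below at $y$. Theorem \ref{A}(a) gives local semiconcavity of $c$ on $(0,\infty)\times I^+$. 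By compactness, $c_s(\cdot,z)$ is semiconcave with a uniform constant in charts, for $z$ ranging in this compact set and $s\in[s_1,s_0]$; one needs to check that a neighbourhood of a compact subset of $I^+$ stays in $I^+$, which holds since $I^+$ is open. This produces, at each $y$, a lower quadratic support with a constant independent of $f$, $s$ and $y$, which is exactly uniform local semiconvexity in the sense of Definition \ref{semiconvexity}.
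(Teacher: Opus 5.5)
Your proposal is correct and follows essentially the paper's own route. Lemma \ref{lk} disposes of far points, a compactness argument produces a limiting optimal point $z$ with $\ell(y,z)=0$ that contradicts Lemma \ref{lbb}(ii), and semiconvexity comes from the local semiconcavity of $c$ on a compact subset of $(0,\infty)\times I^+$. Your choice $Cs_0^{1-p}<d_h(V,\partial V')$ is, if anything, more careful than the paper's, because it keeps all relevant points inside $\overline{V'}$, where the uniform convergence $f_k\to f$ is available.

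The one step to tighten is the passage of optimality to the limit. Closedness of $J^+$ and continuity of $c$ on $J^+$ say nothing about a competitor $z'\in J^+(y)$ with $z'\notin J^+(y_k)$. As the paper does, compare only with an optimal point $z'$ for $\hat T^{V_0}_sf(y)$. This point lies in $I^+(y)$ by Corollary \ref{c1} and Lemma \ref{lbb}(ii), and in $V'$ by Lemma \ref{lk}. Since $I^+$ is open, $z'\in I^+(y_k)$ for large $k$, which gives $\liminf_k \hat T^{V_0}_{s_k}f_k(y_k)\geq \hat T^{V_0}_sf(y)$.
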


\begin{proof}
    Let $s_0>0$ and $C>0$ be such that Lemma \ref{lk} and Corollary \ref{c1} hold when applied to $V'\Subset V_0$ (instead of $V\Subset V_0$). We may assume that $Cs_0^{1-p}\leq d_h(V',\partial V_0)$. To prove \eqref{eqaee}, taking into account Lemma \ref{lk}, it suffices\footnote{
     Indeed, we can set $\delta_s:=\delta_{s_1}'$ for $s\in [s_1,s_0]$, $\delta_s:=\min\{\delta_{s_1/2}',\delta_{s_1}'\}$ for $s\in [s_1/2,s_1]$ and so on.
     } 
     to show that, if $s_1\in (0,s_0]$, there exists $\delta_{s_1}'>0$ such that, for all $f\in \mathcal{F'}$, $s\in [s_1,s_0]$ and $y\in V$, it holds
     \[
        \hat T_s^{V_0}f(y) > \sup\{f(z)-c_s(y,z)\mid  z\in V_0,\ d_h(y,z)\leq Cs^{1-p},\ \ell(y,z)\leq \delta_{s_1}'\}.
     \]
     Suppose this was not the case; then we find sequences $f_k\in \mathcal{F'}$, $y_k\in V$ and $s_k\in [s_1,s_0]$, and for every $k\in \N$ some $z_k\in V_0\cap J^+(y_k)$ with $\ell(y_k,z_k)\leq 1/k$ and $d_h(y_k,z_k)\leq Cs_0^{1-p}$ such that
     \[
        f_k(z_k)-c_{s_k}(y_,z_k)\geq \hat T_s^{V_0}f_k(y_k)-\frac 1k.
     \]
     Up to passing to subsequences, we may assume that $f_k\to f\in \mathcal{F'}$ in $C(\overbar V',\R)$, $y_k\to y\in  V'$, $s_k\to s\in [s_1,s_0]$ and $z_k\to z\in V_0\cap J^+(y)\subseteq V_0$ with $\ell(y,z)=0$ (having used closedness of $J^+$ and continuity of $\ell$).

     It follows from the convergence of $f_k$ to $f$ and the continuity of both $f$ on $V_0$ and $\C$ on $(0,\infty)\times J^+$ that
     \begin{align*}
        \hat T_s^{V_0}f(y) \geq f(z)-c_{s}(y,z) 
        =\lim_{k\to \infty} (f_k(z_k)-c_{s_k}(y_k,z_k))
        = 
        \lim_{k\to \infty} 
        \hat T_{s_k}^{V_0} f_k(y_k).
     \end{align*}
     We claim that this inequality must hold as an equality. Indeed, let $z'\in I^+(y)$ be optimal for $\hat T_s^{V_0}f(y)$, and notice that then also $z'\in I^+(y_k)$ for large $k$. Hence, by $f_k\to f$ and the continuity of $\C$ on $(0,\infty)\times J^+$,
    \begin{align*}
        \liminf_{k\to \infty} \hat T_{s_k}^{V_0}f_k(y_k)
        \geq \liminf_{k\to \infty}\,  [f_k(z')-c_{s_k}(y_k,z')] &= f(z')-c_{s}(y,z')
        \\
        &=\hat T_s^{V_0}f(y).
    \end{align*}
     This proves the claim, and the claim implies that $z$ is optimal. This, together with $\ell(y,z)=0$, however, stands in contradiction to Lemma \ref{lbb}(ii). We thus have proved \eqref{eqaee}. Let us now show how the statement about the semiconvexity follows.

     Fix $s_1\in (0,s_0]$. First note that the existence of an optimal $z$ for each $y\in V$ (Lemma \ref{lzb}) ensures finiteness of $\hat T_s^{V_0}f$ on $V$. Let $y\in  V$ be arbitrary; we show the uniform local semiconvexity near $y$.  
     
     Note that \eqref{eqaee}, the continuity of $\ell$ and the completeness of $h$ easily guarantee an open neighbourhood $\tilde V\subseteq V$ of $y$ and a compact set $K\subseteq M$ with $\tilde V\times K\subseteq I^+$ such that, for all $\tilde y\in \tilde V$
    \[
        \hat T_s^{V_0}f(\tilde y) = \sup\{f(z)-c_s(\tilde y,z)\mid z\in V_0\cap K\}.
    \]
Since $\C$ is locally semiconcave on $(0,\infty)\times I^+$, it follows easily from the compactness of $K$ that the family $(f(z)-c_s(y',z))_{s\in [s_1,s_0],\ f\in \mathcal{F}',\ z\in (V_0\cap K)}$ is uniformly locally semiconvex on $\tilde V$ (\cite{Fathi/Figalli}, Appendix A); hence the family $\hat T_s^{V_0}f$, $s\in [s_1,s_0],\ f\in \mathcal{F}'$, is uniformly locally semiconvex on $\tilde V$ as finite suprema of uniformly locally semiconvex functions (again \cite{Fathi/Figalli}, Appendix A).
\end{proof}

\subsection{Semiconcavity}

In this section the main goal is to prove the counterpart to the second part of Theorem \ref{lzb}, that is, we show local semiconcavity of the local Lax-Oleinik semigroup: 

\begin{theorem}\label{c2}
Let $K_0,F_0$ and $\mathcal{F}$ be as in Remark \ref{rmzd}, and let $V\Subset V_0$. Suppose that the family $\mathcal{F}'\subseteq \mathcal{F}$ is uniformly locally semiconcave on $V_0$. Then there exists $s_0>0$ such that the family 
\[
    \{\hat T_s^{V_0}f\mid s\in [0,s_0],\ f\in \mathcal{F'}\}
\]
is uniformly locally semiconcave on $V$.
\end{theorem}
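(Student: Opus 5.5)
The idea is to run the classical Lasry--Lions argument, which bounds $\hat T_s^{V_0}f(y+h)+\hat T_s^{V_0}f(y-h)-2\hat T_s^{V_0}f(y)$ from above by $C|h|^2$. Let $y\in V$ and $s\in(0,s_0]$. By Corollary \ref{c1} there is an optimal point $z\in I^+(y)\cap V_0$. By Lemma \ref{lk}, $d_h(y,z)\le Cs^{1-p}$, so for $s_0$ small $z$ lies in a fixed compact subset of $V_0$, close to $y$; the case $s=0$ is just the assumed uniform semiconcavity of $\mathcal{F}'$. The obstacle, compared with the Riemannian case, is that we cannot use $\ell(y,z)\ge\delta_s$ from Theorem \ref{lzb}: $\delta_s$ degenerates as $s\to0$, and $c_s$ is not semiconcave near $J^+\setminus I^+$. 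So we should not vary $y$ with $z$ fixed. Instead we translate $z$ together with $y$, exactly as in the Tonelli proofs.

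Fix a chart around $y$ containing the maximizing geodesic $\gamma:[0,s]\to M$ from $y$ to $z$; this is possible for $s_0$ small, uniformly in $y\in V$. For a small displacement $h$, set $z_{\pm}:=z\pm h$ in coordinates, which lie in $V_0$. By optimality of $z$ and the definition of $\hat T^{V_0}_s$,
\[
\hat T_s^{V_0}f(y\pm h)\ \ge\ f(z\pm h)-c_s(y\pm h,z\pm h),\qquad \hat T_s^{V_0}f(y)=f(z)-c_s(y,z).
\]
This gives only lower bounds on $\hat T_s^{V_0}f(y\pm h)$, which is the wrong direction for semiconcavity. We therefore reverse the roles and use the local semiconvexity/supremum structure via the dual Lax--Oleinik description. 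Concretely, semiconcavity of $g:=\hat T_s^{V_0}f$ at $y$ follows if we find, for each $y$, a $p$ with $g(y')\le g(y)+p(y'-y)+C|y'-y|^2$. For $y'$ near $y$, let $z'$ be optimal for $y'$. Let $\gamma'$ be the maximizing geodesic from $y'$ to $z'$, and compare with the translated curve $t\mapsto\gamma'_t+(1-t/s)(y-y')$, which joins $y$ to $z'$. Then
\[
g(y)\ \ge\ f(z')-\mathbb{L}(\gamma'+(1-\cdot/s)(y-y')),
\]
so $g(y')-g(y)\le \mathbb{L}(\text{translated curve})-\mathbb{L}(\gamma')$.

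The main step is to bound this action difference by a first-order term plus $C|y-y'|^2$, uniformly. Using the Euler--Lagrange equation and a Taylor expansion of $L$ along $\gamma'$, the first-order term is $-\frac{\partial L}{\partial v}(y',\dot\gamma'_0)(y-y')$. The second-order remainder involves $\frac{\partial^2L}{\partial v^2}$ evaluated near $\dot\gamma'_t$, multiplied by $|y-y'|^2/s^2$ and integrated over $[0,s]$. This is the point that needs care. We need $\dot\gamma'$ to stay in a compact part of $\op{int}(\C)$ after rescaling, and here the hypothesis \eqref{eqzd} enters. By Lemma \ref{lbb}(iii) and its proof, $\frac{\partial L}{\partial v}(z',\dot\gamma'_s)\in\partial^+f(z')\subseteq K_0$, which lies in a compact subset of $\op{int}(\C^*)$. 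By Proposition \ref{pro2} and conservation along the Hamiltonian flow (Remark \ref{reme}), $\frac{\partial L}{\partial v}(\gamma'_t,\dot\gamma'_t)$ then lies in a compact subset of $\op{int}(\C^*)$ for all $t$, provided $s_0$ is small. In coordinates, the velocities $\dot\gamma'_t$ are therefore uniformly timelike with bounded size, so the perturbed velocities $\dot\gamma'_t-(y-y')/s$ remain in a compact part of $\op{int}(\C)$ for $|y-y'|\lesssim s$. For those $y'$ the remainder is $\le C|y-y'|^2/s$, giving semiconcavity constants of order $1/s$, uniform for $s\in[s_1,s_0]$.

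To get uniformity as $s\to 0$, which the statement demands on $[0,s_0]$, the plan instead moves the endpoint as well, taking the comparison curve $\gamma'_t+(y-y')$. Then
\[
g(y)\ge f(z'+y-y')-\mathbb{L}(\gamma'+(y-y')).
\]
The uniform semiconcavity of $f$ at $z'$, with supergradient $\frac{\partial L}{\partial v}(z',\dot\gamma'_s)$, controls the $f$ part by $C|y-y'|^2$. Since the velocity is unchanged, the action difference is now only $O(s|y-y'|^2)$ from the $x$-dependence of $L$. The first-order terms cancel by the Euler--Lagrange equation, leaving the supergradient $\frac{\partial L}{\partial v}(y',\dot\gamma'_0)$. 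This yields a constant independent of $s$, which is the expected main obstacle resolved.
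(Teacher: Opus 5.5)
There is a genuine gap. Every estimate you derive is a \emph{semiconvexity} estimate, not a semiconcavity one. In your first comparison, $z'$ is optimal for $y'$, and the curve from $y$ to $z'$ is only a competitor for $g(y)$. What you actually obtain is
\[
g(y')-g(y)\le \frac{\partial L}{\partial v}(y',\dot\gamma'_0)(y'-y)+\frac{C}{s}|y-y'|^2,
\]
where the covector is attached to $y'$, not to $y$. With the two points relabelled, this says that at every point $y'$ the function $g$ has a \emph{lower} supporting paraboloid. That is precisely the semiconvexity of Theorem \ref{lzb}, including its $1/s$ degeneration. Turning it into an upper support at $y$ would require $y'\mapsto \frac{\partial L}{\partial v}(y',\dot\gamma'_0)$ to be Lipschitz, which is the $C^{1,1}$ conclusion itself.

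Your second comparison has the same structure, and it also uses the hypothesis in the wrong direction. From $g(y)\ge f(z'+h)-\mathbb{L}(\gamma'+h)$ with $h=y-y'$, you need a lower bound for $f(z'+h)-f(z')$. Semiconcavity of $f$ only bounds this quantity from above. A sanity check shows the step cannot be repaired. By Lemma \ref{lk}, $\hat T^{V_0}_sf\to f$ uniformly on $V$ as $s\to 0$. Semiconvexity of $\hat T^{V_0}_sf$ with constants independent of $s$ would therefore make every $f\in\mathcal{F}'$ semiconvex. This fails, for example, for the minimum of two smooth functions whose differentials lie in a compact subset of $\op{int}(\C^*)$.

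What is missing is a genuine upper bound for $g(y')=\sup_z\bigl(f(z)-c_s(y',z)\bigr)$ at all $y'$ near $y$. One optimal point paired with one competitor can never provide this.

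One option is the classical three-point Lasry--Lions argument:
\begin{enumerate}[(a)]
\item Take optimal points $z_\pm$ for $y\pm h$, and test $g(y)$ with the coordinate midpoint $\bar z$ of $z_\pm$.
\item Bound $f(z_+)+f(z_-)-2f(\bar z)\le C|z_+-z_-|^2$ using semiconcavity of $f$, which is now the right direction.
\item Absorb this error with the uniform convexity, of order $1/s$, of $c_s$ in the variable $z-y$.
\end{enumerate}
The convexity in (c) is available because $\frac{\partial L}{\partial v}(z_\pm,\dot\gamma_s)\in\partial^+f(z_\pm)\subseteq K_0$ keeps the velocities in a compact part of $\op{int}(\C)$. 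This route needs $s_0$ small relative to the semiconcavity constant of $f$, and a careful chart expansion of $c_s$.

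The paper avoids this computation in three steps:
\begin{enumerate}[(a)]
\item It writes $f=\inf_i g_{i,f}$ with smooth paraboloids $g_{i,f}$ whose differentials are equi-Lipschitz (Lemma \ref{pro1}).
\item Through the Hamiltonian flow, it shows that $y\mapsto (y,d_y\hat T^{V_0'}_sg_{i,f})=\psi_{-s}\circ dg_{i,f}\circ\psi_{i,f,s}^{-1}(y)$ is Lipschitz uniformly in $s\in[0,s_0]$ and in $i,f$ (Theorem \ref{thmd}).
\item It proves $\hat T^{V_0}_sf=\inf_i\hat T^{V_0'}_sg_{i,f}$ on $V'$, which is an infimum of uniformly semiconcave functions.
\end{enumerate}
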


Although slightly modified and generalized to the case of a family of functions, the Riemannian version of this theorem is the key step in Bernard’s Lasry–Lions-type result \cite{Bernard}. A detailed proof in the non-compact Riemannian setting – from which we took the idea of first studying some local semigroup – is given in \cite{Fathi/Figalli/Rifford}, which also considers a whole family of functions and whose approach our method builds on.

\begin{lemma}[Approximation by smooth functions]\label{pro1}
    Let $K_0,V_0$ and $\mathcal{F}$ be as in Remark \ref{rmzd}. Let $\mathcal{F}'\subseteq \mathcal{F}$ be a family of functions that are uniformly locally semiconcave on $V_0$, and fix $y_0\in V_0$. Then there exists a causally convex and open precompact neighbourhood $V_0'\subseteq V_0$ of $y_0$, for each $f\in \mathcal{F}'$ an index set $I_f$, and a family of smooth functions $g_{i,f}:V_0'\to \R$, $i\in I_{f}$, $f\in \mathcal{F}'$, such that 
    \begin{enumerate}[(i)]
        \item 
        There exists a compact set $K_0'\subseteq \op{int}(\C^*)$ such that
        \[
        \{(y,d_yg_{i,f})\mid y\in  V_0',\ f\in \mathcal{F}',i\in I_{f}\}\subseteq K_0'.
        \]
        In particular, $g_{i,f}\in \mathcal{F}(K_0',V_0')$.
        \item 
        If $h^*$ denotes any Riemannian metric on $T^*M$, then the functions $dg_{i,f}:V_0'\to T^*M,\ z\mapsto (z,d_zg_{i,f})$, are equi-Lipschitz w.r.t.\ the metrics $d_h$ and $d_{h^*}$.
        
        \item 
         $\forall f\in \mathcal{F}'$: $f_{|V_0'}= \inf_{i\in I_{f}} g_{i,f}$.

        \item 
         $\forall f \in \mathcal{F}',\ y\in V_0',\ p\in \partial^+f(y)$: $\exists\,  i\in I_f$: $g_{i,f}(y)=f(y)$ and $d_yg_{i,f}=p$.
    \end{enumerate}
\end{lemma}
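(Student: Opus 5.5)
The plan is the standard one for semiconcave functions: each $f$ is an infimum of paraboloids touching it from above at every point and supergradient. First fix a chart $\varphi:W\to\R^n$ around $y_0$ with $W\subseteq V_0$, on which the family $(f\circ\varphi^{-1})_{f\in\mathcal{F}'}$ is uniformly $C$-concave on a convex ball $B=B_{2r}(\varphi(y_0))$. (Shrinking $B$ makes every point of $\varphi^{-1}(B)$ have a convex neighbourhood on which the uniform semiconcavity inequality \eqref{eqzad} holds with the same constant.) Then choose $V_0'\subseteq\varphi^{-1}(B_r(\varphi(y_0)))$ to be a small causally convex open precompact neighbourhood of $y_0$, for instance a causal diamond $I^+(a)\cap I^-(b)$ with $a,b$ close to $y_0$. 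Global hyperbolicity guarantees these are causally convex and precompact, and strong causality lets them be taken arbitrarily small.

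Next, for $f\in\mathcal{F}'$ let $I_f:=\{(y,p)\mid y\in V_0',\ p\in\partial^+f(y)\}$. For $i=(y,p)$ define, in coordinates $\tilde z=\varphi(z)$ and $\tilde y=\varphi(y)$,
\[
g_{i,f}(z):=f(y)+\tilde p(\tilde z-\tilde y)+C|\tilde z-\tilde y|^2 .
\]
Here $\tilde p$ is $p$ in coordinates and we use the convention of \eqref{eqzad} with the sign arranged so that $g_{i,f}\geq f$. For a semiconcave function every supergradient realizes the inequality in \eqref{eqzad}; this is the standard characterization, applied on the convex ball. Hence $g_{i,f}\geq f$ on $V_0'$ and $g_{i,f}(y)=f(y)$, which gives (iii). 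Also $d_yg_{i,f}=p$, which gives (iv).

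For (ii), $d_zg_{i,f}=\tilde p+2C(\tilde z-\tilde y)$ in coordinates. This map is affine in $\tilde z$ with the same linear part $2C\,\mathrm{Id}$ for every $i,f$, so it is equi-Lipschitz in coordinates. Precompactness of $V_0'$ inside the chart converts this to $d_h,d_{h^*}$ with uniform constants.

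The main point is (i). By \eqref{eqzd}, all pairs $(y,p)$ with $p\in\partial^+f(y)$ lie in the compact $K_0\subseteq\op{int}(\C^*)$. Since $K_0$ is compact in the open set $\op{int}(\C^*)$, there is $\eta>0$ with the following property: any $(z,q)$ whose coordinate distance from a point $(y,p)\in K_0$ satisfies $|\tilde z-\tilde y|\leq\eta$ and $|\tilde q-\tilde p|\leq 2C\eta$ lies in a fixed compact neighbourhood $K_0'\subseteq\op{int}(\C^*)$ of $K_0$. Take $K_0'$ to be the closed coordinate $(\eta,2C\eta)$-thickening of $K_0$ over the chart, shrunk if necessary.

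So we additionally require the diameter of $V_0'$ in coordinates to be at most $\eta$. Then for $z,y\in V_0'$ we have $|d_zg_{i,f}-\tilde p|\leq 2C\eta$, hence $(z,d_zg_{i,f})\in K_0'$. Therefore $g_{i,f}\in\mathcal{F}(K_0',V_0')$, since smooth functions are semiconcave and their only supergradient is the differential. The only delicate ordering is to fix $C$ and $\eta$ first, and only afterwards choose the diamond $V_0'$ small enough.
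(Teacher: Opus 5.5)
Your proposal is correct and is essentially the paper's proof: the same quadratic majorants $g_{(y,p),f}$ indexed by $I_f=\{(y,p)\mid y\in V_0',\ p\in\partial^+f(y)\}$, with (ii)--(iv) argued identically. For (i) the paper simply says to choose $V_0'$ smaller once $C$ is fixed; you make that step explicit through a coordinate thickening of $K_0$ and a small causal diamond, and for the equality in (iii) you should also say explicitly that $\partial^+f(y)\neq\emptyset$ at every $y\in V_0'$, which holds because $f$ is semiconcave.
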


\begin{proof}
    It is well-known that the set of precompact open and causally convex sets form a basis of the topology. Hence, by assumption, there exists a chart $(\phi,V_0')$ around $y_0$ with $V_0'\subseteq V_0$ being precompact and causally convex such that, for some $C>0$,
     \begin{align}
        \forall f \in \mathcal{F}':\ f\circ \phi^{-1} \text{ is $C$-concave and $C$-Lipschitz}. \label{eqn}
     \end{align}
    For each $f\in \mathcal{F}'$, $y\in V_0'$ and $p\in \partial^+f(y)$, we define a smooth function $ g_{y,p,f}:V_0'\to \R$ by
     \begin{align*}
         &g_{y,p,f}\circ \phi^{-1}(x):= f(y)+p\circ d_{\phi(y)}\phi^{-1}(x-\phi(y))-C|x-\phi(y)|^2.
     \end{align*} 
     By \eqref{eqzd}, we may choose $V_0'$ smaller (and restrict $\phi$) such that there exists a compact set $K_0\subseteq K_0'\Subset \op{int}(\C_x)$ with
     \begin{align}
       (y',d_{y'}g_{y,p,f})\in K_0' \qquad \forall y'\in V_0',\ f\in \mathcal{F}',\ p\in \partial^+ f(y). \label{eqze}
    \end{align}
    We then define, for each $f\in \mathcal{F}'$, the index set
		\begin{align*}
		I_f:=\{(y,p)\mid y\in V_0',\ p\in \partial^+f(y)\},
		\end{align*}
		and we consider the family of smooth functions $(g_{i,f})_{i\in I_f, f\in \mathcal{F}'}$. 

        Part (i) follows from \eqref{eqze}. For Part (ii) notice that, upon a restricting $V_0'$ one more time, we may assume that $\phi:V_0'\to \phi(V_0')\subseteq \R^n$ is Lipschitz and that the tangent bundle chart $T^*\phi:T^*V_0'\to \R^n\times \R^n$ restricts to a bi-Lipschitz map on the precompact set $T^*V_0'\cap K_0'$. Therefore it suffices to show that the functions 
        \[
            \phi(V_0')\to \R^n,\ x\mapsto D(g_{j,f}\circ \phi^{-1})(x),
        \]
        are equi-Lipschitz. But this is trivial by definition, the Lipschitz constant being $2C$. For part (iii), note that the $C$-concavity of $f\circ \phi^{-1}$ implies $g_{i,f}\geq f_{|V_0'}$, and thus "$\leq$" in (iii). The reverse relation "$\geq$" follows from (iv) (which is evident by definition of $g_{i,f}$) since the super-differential of a locally semiconcave function at any point is not empty.
\end{proof}

\begin{remark}\rm
    Since $g_{i,f}\in \mathcal{F}(K_0',V_0')$, the natural local Lax-Oleinik semigroup to study is $\hat T_s^{V_0'}g_{i,f}$.
\end{remark}

\begin{lemma}
    Let $F:(-a,a)\times U\to M$ be a smooth function defined on an open subset $(-a,a)\times U$ of $\R\times T^*M$, and suppose that $F(0,z,p)=z$ for all $(z,p)\in U$. If $K\subseteq U$ is any compact set and $h^*$ is any Riemannian metric on $T^*M$, then there exists a modulus of continuity $\omega_F(s)$ such that
    \[
        d_h(F(s,z,p),F(s,z',p')) \geq d_h(z,z') -\omega_F(s)\, d_{h^*}((z,p),(z',p'))
    \]
    for all  $(s,z,p)\in (-a,a)\times K$.
\end{lemma}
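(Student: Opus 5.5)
The statement is purely metric, so I will not use the Lorentzian structure at all. The idea is to compare $F(s,\cdot)$ with $F(0,\cdot)=\pi$ (the footpoint projection) and to split pairs of points according to how far apart they are, with a threshold that depends on $s$. To turn $d_h$ into something linear, fix an isometric (Nash) embedding $\iota:(M,h)\to\R^N$. Two standard facts about it will be used on a compact neighbourhood of $\pi(K)$.

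First, $|\iota a-\iota b|\le d_h(a,b)$ always. Second, there are constants $r_0,C_1$ such that
\[
d_h(a,b)\le |\iota a-\iota b|+C_1 d_h(a,b)^3
\]
whenever $d_h(a,b)\le r_0$. This holds because chord and arc length of a unit-speed curve in $\R^N$ agree to third order, uniformly on compact sets.

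Next, choose a compact neighbourhood $K'\subseteq U$ of $K$ and a $\delta_0>0$ such that any two points of $K$ at $d_{h^*}$-distance $\le\delta_0$ are joined by a minimizing $h^*$-geodesic lying in $K'$. Also arrange that $\delta_0$ is so small that $\pi$, being Lipschitz on $K'$, sends such pairs to points at $d_h$-distance $\le r_0$. Restrict to $|s|\le a/2$. Set $G_s:=\iota\circ F(s,\cdot)-\iota\circ F(0,\cdot)$. This map is smooth, and $G_0\equiv 0$, so $\sup_{K'}|DG_s|\le C_2|s|$. Integrating along the $h^*$-geodesic gives
\[
|G_s(w)-G_s(w')|\le C_2|s|\,d_{h^*}(w,w')
\]
for all $w,w'\in K$ with $d_{h^*}(w,w')\le\delta_0$. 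Finally, let $\eta(s):=\sup_{w\in K}d_h(F(s,w),F(0,w))$. By uniform continuity of $F$ on $[-a/2,a/2]\times K$, $\eta(s)\to0$ as $s\to0$.

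Now write $w=(z,p)$, $w'=(z',p')$, $d:=d_{h^*}(w,w')$, and fix a threshold $\delta_s\in(0,\delta_0]$, to be chosen below.

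\emph{Case $d\le\delta_s$ (near pairs).} Using the chord inequality, then the triangle inequality in $\R^N$, then the third-order comparison together with $d_h(z,z')\le C_3 d$ (Lipschitz property of $\pi$ on $K'$), we get
\[
d_h(F(s,w),F(s,w'))\ge|\iota z-\iota z'|-|G_s(w)-G_s(w')|\ge d_h(z,z')-C_1C_3^3d^3-C_2|s|d.
\]
Since $d^3\le\delta_s^2\,d$, the loss is at most $(C_1C_3^3\delta_s^2+C_2|s|)\,d$.

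\emph{Case $d>\delta_s$ (far pairs).} The triangle inequality in $M$ gives
\[
d_h(F(s,w),F(s,w'))\ge d_h(z,z')-2\eta(s)\ge d_h(z,z')-\frac{2\eta(s)}{\delta_s}\,d .
\]

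To balance the two cases, choose $\delta_s:=\min\{\delta_0,\eta(s)^{1/3}\}$ if $\eta(s)>0$, and $\delta_s:=\delta_0$ if $\eta(s)=0$. Then define
\[
\omega_F(s):=C_1C_3^3\delta_s^2+C_2|s|+\frac{2\eta(s)}{\delta_s}.
\]
Once $\eta(s)^{1/3}\le\delta_0$, this is of order $\eta(s)^{2/3}+|s|$, so it tends to $0$ as $s\to0$ and is a modulus of continuity. For $a/2\le|s|<a$ I first shrink $a$, which is harmless since only small $s$ matters in the application. If a monotone modulus is wanted, replace $\omega_F$ by its upper envelope.

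The main obstacle is the near-pair case. A naive comparison of $d_h$ in a chart only gives $d_h(F(s,w),F(s,w'))\ge(1-\epsilon)d_h(z,z')-\dots$, and a multiplicative loss is not allowed by the statement. The isometric embedding replaces the multiplicative error by a cubic error $O(d^3)$. That error is not linear in $d$, and the $s$-dependent threshold $\delta_s$ is exactly what turns it into a vanishing linear coefficient, at the price of the $\eta(s)/\delta_s$ term coming from far pairs.
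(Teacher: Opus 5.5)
Your argument is correct up to two easily repaired slips (below), and it takes a different route from the paper. The paper only sketches the proof: reduce to $|s|\le\varepsilon$, localize to $M=\R^n$, and Taylor-expand $F(s,z,p)-z$. In the flat model this gives a loss $C|s|\,d_{h^*}(w,w')$, i.e.\ a linear modulus. The sketch leaves implicit how the Euclidean estimate in a chart is transferred back to $d_h$. For non-flat $h$, a chart controls $d_h$ only up to a fixed multiplicative factor, which is precisely the obstruction you identify.

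You instead linearize $d_h$ by a global isometric embedding and accept an additive cubic chord--arc error. The threshold $\delta_s\to0$ then converts that error into a vanishing linear coefficient, at the cost of the term $2\eta(s)/\delta_s$ on far pairs. This makes the passage to $d_h$ fully rigorous, at the price of a weaker modulus: $O(|s|^{2/3})$, since $\eta(s)=O(|s|)$. That is still all the proof of Theorem~\ref{thmd} uses. The Nash embedding is more than you need: in a chart one has $d_h(a,b)=|b-a|_{h(a)}+O(|b-a|^2)$, and the same near/far splitting with threshold $\eta(s)^{1/2}$ works.

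The first slip concerns $\eta(s)=0$. As written, this forces $\delta_s=\delta_0$ and $\omega_F(s)=C_1C_3^3\delta_0^2+C_2|s|$. So already $\omega_F(0)>0$, and neither $\omega_F$ nor its upper envelope tends to $0$. But $\eta(s)=0$ means $F(s,\cdot)=\pi$ on $K$, where the inequality holds with zero loss, so simply put $\omega_F(s):=0$ in that case.

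The second slip is the shrinking of $a$, which does not prove the statement for all $s\in(-a,a)$. Instead, observe that $d_h(z,z')\le C_4\,d_{h^*}(w,w')$ on $K\times K$, with $C_4:=\max\{C_3,\operatorname{diam}_h\pi(K)/\delta_0\}$. Hence the constant $\omega_F(s):=C_4$ trivially works for $a/2\le|s|<a$; this is the paper's first reduction step.
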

\begin{proof}
    The proof is easy. First one shows that, given $\ep>0$, it suffices to prove the lemma for all $|s|\leq \ep$. Then the problem is local, so one may assume $M=\R^n$. Finally, consider $F(s,z,p)-z$ and use Taylor's theorem.
\end{proof}

For the next theorem, recall that $\psi_s$ denotes the Hamiltonian flow, $\phi_s$ the Lagrangian (i.e.\ geodesic) flow and that $\mathcal{L}$ denotes the Legendre transform.

\begin{theorem}[Propagation of semiconcavity]\label{thmd}
    Under the hypothesis and notation of Lemma \ref{pro1}, given two open sets $V'\Subset V_0''\Subset V_0'$, there exists $s_0>0$ such that, for any $s\in [0,s_0]$, $f\in \mathcal{F}'$ and $i\in I_{f}$, the map 
    \[
    \psi_{i,f,s}:V_0''\to M,\ \psi_{i,f,s}(z):=\pi^*\circ \psi_{-s}(z,d_zg_{i,f}),
    \]
    where $\pi^*:T^*M\to M$ is the canonical projection, is well-defined and a homeomorphism onto its image which contains $V'$. 
    Moreover, the family of maps 
 \[
 \{\hat T_s^{V_0'}g_{i,f}\mid s\in [0,s_0], f\in \mathcal{F}', i\in I_{f}\}
 \]
 is uniformly locally semiconcave on $V'$. Finally, we have
\begin{align}
    \hat T_s^{V_0'}g_{i,f}(\psi_{i,f,s}(z))=g_{i,f}(z)-
    c_s(\psi_{i,f,s}(z),z), \label{eqda}
\end{align}
 whenever $\psi_{i,f,s}(z)\in V'$. 
\end{theorem}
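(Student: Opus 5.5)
We follow the Fathi–Figalli–Rifford strategy: the backward characteristics of the smooth functions $g_{i,f}$ are the curves $t\mapsto \pi^*\circ\psi_{-t}(z,d_zg_{i,f})$, and for small $s$ these characteristics do not cross. The key is the uniform control from Lemma \ref{pro1}(i),(ii).

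\emph{Well-definedness and homeomorphism.} By Lemma \ref{pro1}(i), the graphs $\{(z,d_zg_{i,f})\}$ lie in the compact set $K_0'\subseteq \op{int}(\C^*)$, on which the Hamiltonian flow is smooth. Hence there is $s_0>0$ such that $\psi_{-s}$ is defined on a neighbourhood of $K_0'$ for $|s|\le s_0$. Set $F(s,z,p):=\pi^*\circ\psi_{-s}(z,p)$ and apply the preceding lemma with $K=K_0'$. Combined with the equi-Lipschitz property of $z\mapsto (z,d_zg_{i,f})$ (Lemma \ref{pro1}(ii), constant $\Lambda$), it gives
\[
d_h(\psi_{i,f,s}(z),\psi_{i,f,s}(z'))\ge (1-\Lambda\,\omega_F(s))\,d_h(z,z').
\]
So for $s_0$ small the maps $\psi_{i,f,s}$ are injective with Lipschitz inverse, uniformly in $(i,f,s)$. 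They are also $C^0$-close to the inclusion, since $d_h(\psi_{i,f,s}(z),z)\le Cs$ uniformly. An invariance-of-domain / degree argument (a map uniformly close to the identity on $\overbar{V_0''}$ covers any set at positive distance from $\partial V_0''$) then yields $V'\subseteq \psi_{i,f,s}(V_0'')$ after shrinking $s_0$.

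\emph{Identity \eqref{eqda}.} Fix $y=\psi_{i,f,s}(z)\in V'$. By Corollary \ref{c1} (applied with $V_0'$, $\mathcal{F}(K_0',V_0')$, $V'$), there is an optimal $z'\in V_0'\cap I^+(y)$. The first-order condition in Lemma \ref{lbb}(iii) shows $\frac{\partial L}{\partial v}(z',\dot\gamma_s)\in\partial^+g_{i,f}(z')=\{d_{z'}g_{i,f}\}$, using that $g_{i,f}$ is smooth. This step does not need differentiability of the value function; it comes from the super-differentiability of $c_s$ (Theorem \ref{A}). By Remark \ref{reme}, $\mathcal{L}(\gamma,\dot\gamma)$ is a Hamiltonian orbit, so $y=\psi_{i,f,s}(z')$. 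By Lemma \ref{lk}, $z'$ is close to $y$, hence lies in $V_0''$ for small $s_0$. Injectivity then forces $z'=z$, which gives \eqref{eqda}. The geodesic $s\mapsto\pi^*\psi_{-s}$ is the maximizing one joining $y$ to $z$, because it is the unique candidate.

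\emph{Uniform semiconcavity.} This is the main obstacle. Write $y\in V'$ and $z=\psi_{i,f,s}^{-1}(y)$, and let $\gamma$ be the geodesic from $y$ to $z$. For $y'$ near $y$, take $z'=\exp$-type translate of $z$, namely $z'$ in a chart equal to $z+(y'-y)$. Then
\[
\hat T_s^{V_0'}g_{i,f}(y')\ge g_{i,f}(z')-c_s(y',z').
\]
Expanding the right-hand side gives
\[
\hat T_s^{V_0'}g_{i,f}(y)+\langle p,y'-y\rangle - C|y'-y|^2 .
\]
For this we need two inputs. The first is the uniform $C^{1,1}$ bound on $g_{i,f}$ from Lemma \ref{pro1}(ii). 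The second is a uniform local semiconcavity of $c_s$ near $(y,z)$ along the diagonal direction $(v,v)$, uniform in $s\in(0,s_0]$.

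The delicate point is that $c_s$ degenerates as $s\to0$: its Hessian blows up like $s^{-1}$ in general. However, translating both endpoints by the same vector keeps $\ell(y',z')$ smooth. Indeed, $(y,z)\in I^+$ is at distance $\sim s$ in a uniformly timelike direction, because $(z,d_zg)\in K_0'$. Moreover, the second variation of $\ell$ under parallel translation of both endpoints is bounded by curvature times length. I would verify this through a rescaling: set $\ell(y,z)=s\lambda$ with $\lambda$ bounded above and below. Then $-c_s=s\,(\ell/s)^p$, and $\ell/s$ has bounded second derivative in the joint-translation direction. The case $s=0$ is trivial, since there $\hat T_0 g=g$.
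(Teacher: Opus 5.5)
Your first two steps are sound. You obtain the first-order condition $\frac{\partial L}{\partial v}(z',\dot\gamma_s)=d_{z'}g_{i,f}$ at an arbitrary optimizer without assuming differentiability of $\hat T_s^{V_0'}g_{i,f}$. That argument works for every $y\in V'$, so it gives $V'\subseteq\psi_{i,f,s}(V_0'')$ as well as \eqref{eqda}, and the degree argument is redundant.

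The gap is in the semiconcavity step. The inequality you derive,
\[
\hat T_s^{V_0'}g_{i,f}(y')\ \ge\ g_{i,f}(z')-c_s(y',z')\ \ge\ \hat T_s^{V_0'}g_{i,f}(y)+\langle p,y'-y\rangle-C|y'-y|^2,
\]
is a lower quadratic bound at $y$. That is uniform \emph{semiconvexity}; Definition \ref{semiconvexity} requires the reverse inequality for semiconcavity. This is not a sign slip that can be repaired. Inserting a competitor into a supremum can only produce lower bounds. Even the symmetrized version fails: take the optimizer $z''$ for $y'$, translate it to a competitor for $y$, and use $d_{z''}g_{i,f}=\partial_z c_s(y',z'')$. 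This only gives a lower paraboloid at $y'$, which is again semiconvexity. Semiconcavity of $\hat T_s^{V_0'}g_{i,f}$ expresses the fact that characteristics do not cross for small $s$. So the injectivity of $\psi_{i,f,s}$ and the Lipschitz bound on its inverse must enter the proof, and in your third step they never do. The discussion of the diagonal second variation of $\ell$ does not bear on the required inequality.

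The missing idea is already available from your first two steps. Every $y\in V'$ has the unique optimizer $z=\psi_{i,f,s}^{-1}(y)$, and the optimal geodesic is the projection of the Hamiltonian orbit through $(z,d_zg_{i,f})$. Hence, at differentiability points (Lemma \ref{lbb}(iii)),
\[
(y,d_y\hat T_s^{V_0'}g_{i,f})=\psi_{-s}\circ dg_{i,f}\circ\psi_{i,f,s}^{-1}(y).
\]
The right-hand side is continuous on $V'$. Since $\hat T_s^{V_0'}g_{i,f}$ is locally semiconvex (Theorem \ref{lzb}), it follows that it is $C^1$ on $V'$. Its differential is then Lipschitz with constant $\op{Lip}(\psi|_{[-s_0,s_0]\times K_0'})\,\Lambda\,(1-\Lambda\,\omega_F(s_0))^{-1}$, which is independent of $s$, $i$ and $f$. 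Lemma \ref{lema} then yields uniform local semiconcavity (and semiconvexity). This is the paper's route. It avoids the $1/s$ degeneration of $c_s$ entirely, because the Hamiltonian flow is smooth up to $s=0$.
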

\begin{proof}  
    Define the index set $J:=\{(i,f)\mid f\in \mathcal{F}',\ i\in I_{f}\}$, so that $g_{i,f}=g_j$ and $\psi_{i,f,s}=\psi_{j,s}$.
    
    We start with the following observation:
    If $s>0$ and $\hat T^{V_0'}_sg_j$ is differentiable at $y\in V'$, then Lemma \ref{lbb} states that there exists at most one $z\in V_0'$ with
    \begin{align}
        \hat T^{V_0'}_sg_j(y)=g_j(z)-c_s(y,z), \label{j}
    \end{align}
    and necessarily $z\in I^+(y)$. Moreover, Part (iii) of the same lemma states that
    \begin{align*}
        d_y(\hat T^{V_0'}_sg_j)=\frac{\partial L}{\partial v}(y,\dot \gamma_0)\quad \text{ and } \quad d_zg_j=\frac{\partial L}{\partial v}(z,\dot \gamma_s),
    \end{align*}
     where $\gamma:[0,s]\to M$ is the unique maximizing geodesic from $y$ to $z$. Thus, since the Hamiltonian flow is conjugate to the geodesic flow via the Legendre transform $\mathcal{L}$ (Remark \ref{reme}), it follows
    \begin{align}
        (y,d_y(\hat T^{V_0}_sg_j))=\Bigl(\gamma_0,\frac{\partial L}{\partial v}(\gamma_0,\dot \gamma_0)\Bigr) 
        &={\cal L}(\gamma_0,\dot \gamma_0) \nonumber
        \\
        &={\cal L}(\phi_{-s}(\gamma_s,\dot \gamma_s)) \nonumber
        \\
        &=\psi_{-s}\Bigl(z,\frac{\partial L}{\partial v}(z,\dot \gamma_s)\Bigr) \nonumber
        \\
        &=\psi_{-s}(z,d_zg_j). \label{eqy}
    \end{align}
    Having made this observation, let us start start with the actual proof.
    \\

    Fix a Riemannian distance $h^*$ on $T^*M$. By Lemma \ref{pro1}(i),(ii), the set
    \begin{align}
        K_0'':=\overbar{{\{(z,d_zg_j)\mid z\in V_0'',\  j\in J\}}}\subseteq T^*V_0', \label{eqzp}
    \end{align}
    is a compact subset of $\op{int}(\C^{*})\cap T^*V_0'$, and the mappings
    \begin{align}
        dg_j:V_0''\to T^*M,\ z\mapsto (z,d_zg_j), \label{eqzq}
    \end{align}
    admit a uniform Lipschitz constant $C_1$.
    From \eqref{eqzp}, we conclude the existence of a precompact open neighbourhood $U\Subset \op{int}(\C^{*})\cap T^*V_0'$ of $K_0''$, and a time $T>0$  such that the Hamiltonian flow $\psi$ is well-defined on $[-T,T]\times U$. Let us consider the map
    \begin{align*}
         F:(-T,T)\times U\to M,\ (s,z,p)\mapsto (\pi^*\circ \psi_{-s})(z,p).
    \end{align*}
     $F$ is smooth and  satisfies $F(0,z,p)=z$. Consequently, by the above lemma, there exists a modulus of continuity $\omega_F:[0,\infty)\to [0,\infty)$ such that, for any $s\in (-T,T)$ and $(z,p)\in K_0''$, we have
     \begin{align}
        d_h(F(s,z,p),F(s,z',p')) \geq d_h(z,z')-\omega_F(s)\, d_{h^*}((z,p),(z',p')). \label{eqzt}
    \end{align}
     Let $C,s_0>0$ be as in Lemma \ref{lk}, Corollary \ref{c1} and Theorem \ref{lzb} when applied to $\mathcal{F}(K_0',V_0')$ and $V'\Subset V_0'$. We may assume that
     \begin{align*}
        s_0< T,\quad 1-C_1\omega_F(s_0)>0 \quad \text{ and } \quad Cs_0^{1-p}\leq d_{h}(V',\partial V_0'')/2.
     \end{align*}
    Now, fix $s\in (0,s_0]$ and $j\in J$.

    We come to the proof of the first part: Notice that $\psi_{j,s}(z)=F(s,z,d_zg_j)$ is well-defined on $V_0''$ by definition of $K_0''$, $U$ and $s_0<T$. Moreover, the definition of $C_1$ and \eqref{eqzt} imply that 
    \begin{align}
        d_h(\psi_{j,s}(z),\psi_{j,s}(z')) \geq (1-\omega_F(s) C_1) d_h(z,z') \quad \forall z,z'\in V_0'', \label{eqzu}
    \end{align}
    implying that $\psi_{j,s}$ is a homeomorphism onto its image – again by definition of $s_0$.
    To conclude the first part of the theorem, we shall now prove that $\psi_{j,s}(V_0'')$ contains $V'$:
    
    Let $y\in V'$ be a differentiability point of $\hat T_s^{V_0'}g_j$. By Corollary \ref{c1}, Lemma \ref{lk}, and the choice of $s_0$, there exists an optimal $z\in V_0'$ and $z$ belongs to the compact set
    \[
        A:=\{z'\in V_0'\mid d_h(z',\partial V_0'')\geq d_h(V',\partial V_0'')/2\}\subseteq V_0''.
    \]
    Moreover, \eqref{eqy} implies that $\psi_{j,s}(z)=y$. In particular, $\psi_{j,s}(A)$ contains a dense set of $V'$, namely all the differentiability points of the locally semiconvex (hence locally Lipschitz) function $\hat T^{V_0'}_sg_j$ (Theorem \ref{lzb}). However, since $\psi_{j,s}$ is continuous and $A$ is compact, it follows that actually $\psi_{j,s}(A)\supseteq V'$, proving the first part of the theorem.
    
   For the second part, notice that \eqref{eqy} shows for any differentiability point $y\in V'$ that
    \begin{align}
        (y,d{\hat T^{V_0'}_sg_j}(y))=\psi_{-s}\circ dg_j\circ \psi_{j,s}^{-1}(y) \label{eqzab}
    \end{align}
    The right-hand side of the above equation is well-defined and continuous on $V'$. Thus, being $\hat T^{V_0'}_sg_j$ locally semiconvex with derivative admitting a continuous extension to $V'$, it must be $C^1$ on $V'$. Thus, a posteriori, \eqref{eqzab} holds in fact at every point of $V'$. Not observe that right-hand side of \eqref{eqzab} is even Lipschitz continuous on $V'$ with Lipschitz constant 
    \begin{align*}
        \op{Lip}(\psi_{|[-T,T]\times K_0''}) C_1 (1-C_1\omega_F(s_0))^{-1}
    \end{align*}
    (having used \eqref{eqzq} and \eqref{eqzu}), which is independent of $s$ and $j$. To summarize, we showed that the functions 
    \[
        V'\to T^*M,\ y\mapsto (y,d_y{\hat T^{V_0'}_sg_j}),\quad s\in (0,s_0],\ j\in J,
    \]
    are equi-Lipschitz w.r.t.\ the Riemannian metrics $h$ and $h_*$ and have image in the compact set $\psi([-T,T]\times K_0'')\subseteq T^*M$ (this holds obviously also for $s=0$). It is not difficult to deduce from this that the family $\{\hat T^{V_0'}_sg_{j}\mid s\in [0,s_0],\ j\in J\}$ is uniformly locally semiconcave (and also semiconvex) on $V'$ (Lemma \ref{lema}), proving the second part of the lemma.

    Finally, to prove \eqref{eqda}, fix $s\in (0,s_0]$, $j\in J$ and $z\in V_0''$ such that $\psi_{j,s}(z)\in V'$. Then $\psi_{j,s}(z)\in V'$ is a differentiability point of $\hat T^{V_0'}_sg_j$, and hence our observation from the beginning of the proof shows that $\psi_{j,s}(z)=\psi_{j,s}(z')$ if $z'\in V_0''$ is the unique optimal point for $\hat T^{V_0'}_sg_j(\psi_{j,s}(z))$. This combines with the injectivity of $\psi_{j,s}$ to show $z'=z$ and hence
    \begin{align*}
        \hat T^{V_0'}_sg_j(\psi_{j,s}(z)) = g_j(z)-c_s(\psi_{j,s}(z),z).
    \end{align*}
    This concludes the proof.
\end{proof}

\begin{proof}[Proof of Theorem \ref{c2}]
    Fix $y_0\in V$, let $V_0'$ be any neighbourhood of $y_0$ as in Lemma \ref{pro1}, and let $y\in V'\Subset V_0''\Subset V_0'$ be any two open sets. By definition, it clearly suffices to show uniform local semiconcavity on $V'$. 
    
    Let $s_0>0$ be given by the theorem above and such that that Lemma \ref{lk} and Corollary \ref{c1} hold (with $s_0$ and some $C$) when applied to  $\mathcal{F}(K_0,V_0)$ and $V'\Subset V_0$. We may also assume that $Cs_0^{1-p}\leq d_h(V',V_0'')$. By the preceding theorem, it suffices to show that, for all $f\in \mathcal{F}'$ and $s\in [0,s_0]$,
    \[
        \hat T_s^{V_0}f = \inf_{i\in I_f} \hat T_s^{V_0'}g_{i,f} \quad \text{ on } V',
    \]
    since finite\footnote{Finiteness from $\hat T_s^{V_0}f$ on $V'$ follows from the existence of optimal points in Corollary \ref{c1}.} infima of uniformly locally semiconcave functions are again uniformly locally semiconcave (\cite{Fathi/Figalli}, Appendix A).
    
    Fix $f\in \mathcal{F}'$ and $s\in [0,s_0]$. If $s=0$, the result follows immediately from Proposition \ref{pro1}(iii). Thus, suppose $s\neq 0$, and let $y\in V'$. By Lemma \ref{lk}, Corollary \ref{c1} and the definition of $s_0$, there exists $z\in V_0''$ with 
    \begin{align*}
        \hat T_s^{V_0}f(y)=f(z)-c_s(y,z),
    \end{align*}
    and necessarily $z\in I^+(y)$.
    If $\gamma:[0,s]\to M$ is a necessarily future timelike maximizing geodesic with $\gamma_0=y$ and $\gamma_s=z$, Lemma \ref{lbb} implies that
    \begin{align}
        p:=\frac{\partial L}{\partial v}(z,\dot \gamma_s)\in \partial^+f(z). \label{eqag}
    \end{align}
    Therefore, Proposition \ref{pro1}(iv) ensures the existence of $i\in I_{f}$ with $g_{i,f}(z)=f(z)$ and $d_zg_{i,f}=p$. Since $\gamma$ is a geodesic with ${\cal L}(\gamma_s,\dot \gamma_s)=(z,p)$, we have $\gamma_t=\pi^*\circ \psi_{-s+t}(z,p)=\psi_{i,f,s-t}(z)$ for $t\in [0,s]$, and in particular, $\psi_{i,f,s}(z)=\gamma_0=y\in V'$.
       Hence, by the previous theorem, we obtain
    \begin{align*}
        \hat T_s^{V_0'}g_{i,f}(y)
        =
        g_{i,f}(z)-c_s(y,z)
        =
        f(z)-c_s(y,z)
        =
        \hat T_s^{V_0'} f(y).
    \end{align*}
    This shows that
    \begin{align*}
       \inf_{i\in I_{t}} \hat T^{V_0'}_sg_{i,f}(y)
       \leq \hat T^{V_0}_sf(y). 
    \end{align*}
    The converse inequality follows from $z\in V_0''\subseteq V_0'$ and Lemma \ref{pro1}(iii) since
    \begin{align*}
        \hat T_s^{V_0}f(y)=f(z)-c_s(y,z) \leq g_{i,f}(z)-c_s(y,z) \leq \hat T^{V_0'}_sg_{i,f}(y) \quad \forall i\in I_f. \qedhere
    \end{align*}
\end{proof}

\section{The (global) Lax-Oleinik semigroup}

\subsection{Proof of Theorem \ref{main}}

This section is devoted to the proof of the main theorem \ref{main}, which is a direct consequence of the following more general theorem.

\begin{theorem}[Lasry-Lions regularization]\label{thmza}
    Let $N$ be a smooth manifold, and let $u:N\times M\to \R\cup\{\pm\infty\},\ (x,y)\mapsto u(x,y)=u_x(y)$, be such that the following family of Lax-Oleinik functions
    \begin{align*}
        \hat u:(0,\infty)\times N\times M\to \R\cup\{\pm \infty\},\ (t,x,y)\mapsto (T_tu_x)(y), 
    \end{align*}
    is locally semiconcave on an open neighbourhood of $(t_0,x_0,y_0)$ with
    \begin{align}
        \partial^+(T_{t_0}u_{x_0})(y)\Subset \op{int}(\C_{y_0}^*). \label{eqzac}
    \end{align}
    Then there exist three open neighbourhoods $I$, $U$ and $V$ of $t_0$, $x_0$ and $y_0$, respectively, and some number $s_0>0$ such that the following properties hold:
    \begin{enumerate}[(i)]
    \item The family $\{\hat T_s\circ T_tu_x\mid (s,t,x)\in [0,s_0]\times I\times U\}$ is uniformly locally semiconcave on $V$, and for any $s_1\in (0,s_0]$, the family $\{\hat T_s\circ T_tu_x\mid (s,t,x)\in [s_1,s_0]\times I\times U\}$ is uniformly locally semiconvex on $V$. In particular, $\hat T_s\circ T_tu_x$ is $C^{1,1}_{loc}$ on $V$.
    \item For $(s,t,x,y)\in (0,s_0]\times I\times U\times V$, there exists a unique $z\in M$ with $\hat T_s\circ T_tu_x(y)=T_tu_x(z)-c_s(y,z)$, and necessarily $\ell(y,z)>0$. Moreover, setting $z=y$ if $s=0$, $z$ depends continuously on $(s,t,x,y)$.
    \end{enumerate}
\end{theorem}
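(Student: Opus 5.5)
The plan is to reduce Theorem \ref{thmza} to the local results of Section 3 by showing that, near $y_0$ and for small $s$, the global backward semigroup $\hat T_s$ applied to $f_{t,x}:=T_tu_x$ coincides with the local semigroup $\hat T_s^{V_0}f_{t,x}$ for a suitable causally convex precompact neighbourhood $V_0$ of $y_0$.

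\textbf{Choosing the data.} By local semiconcavity of $\hat u$ near $(t_0,x_0,y_0)$, the functions $f_{t,x}$ are finite and uniformly locally semiconcave on a causally convex neighbourhood $V_0\Subset M$ of $y_0$, for $(t,x)$ in small neighbourhoods $I_0\times U_0$. Moreover, superdifferentials of $\hat u$ project onto superdifferentials of $f_{t,x}$ in the $y$-variable, and the superdifferential of a semiconcave function is upper semicontinuous. Hence \eqref{eqzac} propagates: after shrinking $I_0,U_0,V_0$ there is a compact $K_0\subseteq\op{int}(\C^*)$ with $\partial^+f_{t,x}(y)\subseteq K_0$ for all $(t,x,y)\in I_0\times U_0\times V_0$. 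Thus $\mathcal{F}':=\{f_{t,x}\}\subseteq\mathcal{F}(K_0,V_0)$ is uniformly locally semiconcave. It is also compact in $C(\overbar{V'},\R)$ after shrinking to closed $I_0,U_0$, since $\hat u$ is continuous there and $(t,x)\mapsto f_{t,x}$ is continuous. With this, Theorems \ref{lzb} and \ref{c2} give uniform semiconvexity for $s\in[s_1,s_0]$ and uniform semiconcavity for $s\in[0,s_0]$ of $\hat T^{V_0}_sf_{t,x}$ on some $V\Ni y_0$. Together these give $C^{1,1}_{loc}$.

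\textbf{Global equals local (main obstacle).} We must show $\hat T_s f_{t,x}(y)=\hat T_s^{V_0}f_{t,x}(y)$ for $y\in V$. This means excluding competitors $z\notin V_0$ with $f_{t,x}(z)-c_s(y,z)$ large, where $f_{t,x}$ is only controlled on $V_0$. The tool will be the semigroup property, Lemma \ref{semigroup}, in the form $T_t u_x=T_{\tau}T_{t-\tau}u_x$. Take any $z\in J^+(y)\setminus V_0$ and a maximizing geodesic $\gamma$ from $y$ to $z$. It leaves $V_0$ and so crosses an intermediate point $w\in V_0$ near $\partial V_0$. By $T_t u_x(z)\le T_{t'}u_x(w)+c_{t-t'}(w,z)$, with $t'$ slightly smaller than $t$ (still inside $I_0$), and by the reverse triangle inequality for $\ell$, we get
\[
f_{t,x}(z)-c_s(y,z)\le f_{t',x}(w)+c_{t-t'}(w,z)-c_s(y,z).
\]
The difficulty is that $c_{t-t'}(w,z)-c_s(y,z)$ involves different time parameters, so it needs care. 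I would use the concavity of $\ell^p$ along the geodesic together with the elementary inequality $-(a+b)^p s^{1-p}\ge -a^p r^{1-p}-b^p(s-r)^{1-p}$ (the time-splitting $c_s(y,z)\le c_r(y,w)+c_{s-r}(w,z)$ goes the wrong way). Alternatively, I would bound $f_{t,x}(z)\le f_{t,x}(w)$ directly via monotonicity of $T_t u_x$ along causal curves. Since $u_x$ may be arbitrary outside $V_0$, the plan is to choose $V_0$ so that $\ell(y,z)$ is tiny for all $y\in V$ and $z\in J^+(y)\cap\partial V_0$-crossing region. Then the gain $-c_s(y,z)\le s^{1-p}\ell^p$ is negligible as $s\to0$, while along the portion of $\gamma$ inside $V_0$ the decrease $f(w)-f(y)\le -C_0 d_h(y,w)$ of Remark \ref{rmzd} is bounded below. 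This step, making $s_0$ uniform in $(t,x)$, is the one I expect to require the most work.

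\textbf{Part (ii).} Existence of an optimal $z$ with $\ell(y,z)>0$ follows from Corollary \ref{c1} and Lemma \ref{lbb}(ii) once global and local agree. Uniqueness follows from differentiability (now $C^1$) via Lemma \ref{lbb}(iii). Continuity in $(s,t,x,y)$ follows from a standard compactness argument: limits of optimal points are optimal, by the continuity of $\hat u$ and $c$ and the local uniform bound $d_h(y,z)\le Cs^{1-p}$ from Lemma \ref{lk}. That bound also gives $z\to y$ as $s\to0$, matching the convention $z=y$ at $s=0$.
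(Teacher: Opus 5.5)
There is a genuine gap: the setup, part (i) via Theorems \ref{lzb} and \ref{c2}, and part (ii) follow the paper, but the step you flag as the main obstacle is not closed, and neither of your two suggestions works as stated. The inequality $-(a+b)^ps^{1-p}\ge -a^pr^{1-p}-b^p(s-r)^{1-p}$ is false in general. H\"older gives $a^pr^{1-p}+b^p(s-r)^{1-p}\le (a+b)^ps^{1-p}$, with equality only if $a/r=b/(s-r)$. The monotonicity route also fails. It is true that $T_tu_x(z)\le T_tu_x(w)$ for $w\in J^-(z)$, but this discards everything about $z$ and leaves the gain $s^{1-p}\ell(y,z)^p$. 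That gain is unbounded as $z$ ranges over $J^+(y)\setminus V_0$. No choice of $V_0$ makes $\ell(y,z)$ small for all such $z$, and nothing controls $u_x$ there.

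The missing idea is to make the time shift in the semigroup property equal exactly the time the geodesic spends after $w$. Parametrize the maximizing geodesic as $\gamma:[0,s]\to M$. Take $w:=\gamma_r\in V_0$ with $d_h(y,w)\ge Cs^{1-p}$, which is possible once $Cs_0^{1-p}<d_h(V,\partial V_0)$, and put $t':=t-(s-r)$. Restrictions of $L$-minimizers are $L$-minimizers (this is the equality case above), so $c_s(y,z)=c_r(y,w)+c_{s-r}(w,z)$. Hence
\[
T_tu_x(z)-c_s(y,z)\le T_{t'}u_x(w)+c_{s-r}(w,z)-c_s(y,z)=T_{t'}u_x(w)-c_r(y,w),
\]
and $z$ has disappeared. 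Require $t-s\in I_0$, so that $T_{t'}u_x\in\mathcal F(K_0,V_0)$. Then Lemma \ref{lk}, applied at time $r\le s$, bounds the right-hand side by $T_{t'}u_x(y)-\frac{C_0C}{2}s^{1-p}$. You then need one ingredient absent from your plan: the Lipschitz constant $L$ of $\hat u$ in the $t$-variable, giving $T_{t'}u_x(y)\le T_tu_x(y)+Ls$. Since $p<1$, one can take $s_0$ with $Ls\le\frac{C_0C}{4}s^{1-p}$. This yields $T_tu_x(z)-c_s(y,z)\le\hat T_s\circ T_tu_x(y)-\frac{C_0C}{4}s^{1-p}$ for all $z\notin V_0$, uniformly in $(t,x)$. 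The strictness matters too: it is what puts the global optimal points inside $V_0$, which your part (ii) relies on.
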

\begin{proof}
    If $t_k\to t_0$, $x_k\to x_0$, $y_k\to y_0$ and $p_k\in \partial^+(T_{t_k}u_{x_k})(y_k)$, then local semiconcavity (hence Lipschitzity) of $\hat u$ near $(t_0,x_0,y_0)$ implies boundedness of $(p_k)$ (cf.\ \cite{Fathi}, Proposition 7.3.4). Hence, up to a subsequence, $(y_k,p_k)\to (y_0,p_0)$ and the semiconvexity of $\hat u$ implies $p_0\in \partial^+(T_{t_0}u_{x_0})(y)$. This observation combines with Assumption \eqref{eqzac} to guarantee the existence of open neighbourhoods $I_0\subseteq (0,\infty)$ of $t_0$ and $U_0,V_0\subseteq M$ of $x_0$ and $y_0$ such that $\hat u$ is locally semiconcave and Lipschitz (with constant $L$) on $I_0\times U_0\times V_0$ with
        \begin{align*}
            \{(y,p)\in T^*M\mid  (t,x,y)\in I_0\times U_0\times V_0,\ p\in \partial^+(T_{t}u_x)(y)\}\subseteq K_0
        \end{align*}
        for some compact set $K_0\subseteq \op{int}(\C^{*})$.
    We may assume that $V_0$ is precompact and causally convex; thus, $T_tu_x\in \mathcal{F}:=\mathcal{F}(K_0,V_0)$ for all $(t,x)\in I_0\times U_0$. 
        
        Now fix open neighbourhoods $V\Subset V'\Subset V_0$, $U\Subset U_0$ and $I\Subset I_0$ of $y_0$, $x_0$ and $t_0$, respectively. Notice that the family 
        \[
            \mathcal{F}':=(T_tu_x)_{(t,x)\in \overbar I\times \overbar U}\subseteq \mathcal{F}
        \]
        is compact in the $C(\overbar V',\R)$-topology, and also uniformly locally semiconcave on $V_0$ (\cite{Fathi}, Proposition A.17 and a compactness argument). We may now choose $C,s_0>0$ as in Lemma \ref{lk}, Corollary \ref{c1} and Theorems \ref{lzb}, \ref{c2}, and we may assume that \begin{align}
              Cs_0^{1-p}\leq d_h(V,\partial V_0),\quad t-s\in I_0 \quad \text{ and }\quad  Ls\leq \frac{C_0C}{4} s^{1-p} \label{eqdb}
        \end{align}
        for all $t\in I$ and $s\in [0,s_0]$, where $C_0:=C_0(K_0,V_0)$.
        \medskip 
        
        \noindent \textbf{Claim:} For $(s,t,x,y)\in (0,s_0]\times I\times U\times V$ it holds 
        \[
            \hat T_s\circ T_{t}u_x(y)>\sup\{T_{t}u_x(z)-c_s(y,z)\mid z\notin V_0\}.
        \] 
        In particular, $\hat T_s\circ T_tu_x(y)=\hat T_s^{V_0}\circ T_tu_x(y)$.
        \medskip 
        
        \noindent\textbf{Proof of claim:} Fix $(s,t,x,y)$ as in the claim, and let $z\in J^+(y)\backslash V_0$. Let $\gamma:[0,s]\to M$ be a maximizing geodesic connecting $y$ to $z$. By definition of $z$ and $s_0$, there exists $s'\in (0,s]$ with $d_h(y,\gamma_{s'})=Cs^{1-p}$; set $z':=\gamma_{s'}\in V_0$. It follows from Lemma \ref{semigroup} that
        \begin{align}
            T_tu_x(z)-c_s(y,z) &\leq T_{t-s+s'}u_x(z') +c_{s-s'}(z',z)-c_{s}(y,z) \nonumber
            \\[5pt]
            &=T_{t-s+s'}u_x(z')-c_{s'}(y,z'). \label{eqzc}
        \end{align}
        However, $t-s+s'\in I_0$ by \eqref{eqdb}, and thus $T_{t-s+s'}u_x\in \mathcal{F}$. Since in addition $d_h(y,z')=Cs^{1-p}$, we have 
        \begin{align*}
            T_tu_x(z)-c_s(y,z)
            &\overset{\eqref{eqzc}}{\leq} 
            T_{t-s+s'}u_x(z')-c_{s'}(y,z').
            \\[5pt]
            \text{by Lemma \ref{lk}}\qquad \qquad &\leq T_{t-s+s'}u_x(y)-\frac{C_0C}{2}s^{1-p}
            \\[5pt]
            \text{by definition of $L$} \qquad \qquad &\leq T_tu_x(y)+Ls-\frac{C_0C}{2} s^{1-p}
            \\[5pt]
            \eqref{eqdb} \qquad \qquad \qquad &\leq T_tu_x(y)-\frac{C_0C}{4}s^{1-p}
            \\[5pt]
            &\leq \hat T_s\circ T_tu_x(y)-\frac{C_0C}{4}s^{1-p}.
        \end{align*}
        Since $z\in J^+(y)\backslash V_0$ was arbitrary, we have proved the claim.
        \hfill \checkmark 
        \medskip
        
        The first two statements in (i) follows from the claim and Theorems \ref{lzb} and \ref{c2}. The second part of (ii) follows from the first one by a well-known result stating that a function is $C^{1,1}_{loc}$ if and only if it is locally semiconvex and locally semiconcave (\cite{Fathi}, Theorem 6.1.5). 
        
        It remains to prove (ii). 
        Note that the claim implies that $z$ is optimal for $\hat T_s\circ T_tu_x(y)$ if and only if it is optimal for $\hat T_s^{V_0}\circ T_tu_x(y)$; hence Corollary \ref{c1} and Lemma \ref{lbb} show existence and uniqueness of an optimal $z$, as well as the necessity of $z\in I^+(y)$. We only need to show that $z$ depends continuously on $(s,t,x,y)$. 
        
        Suppose that $(s_k,t_k,x_k,y_k)\in (0,s_0]\times I\times U\times V$ converges to $(s,t,x,y)\in (0,s_0]\times I\times U\times V$. Let $z_k$ and $z$ be the corresponding optimal points.
        Since $z\in I^+(y)$, also $z\in I^+(y_k)$ for large $k$, and thus the continuity of both $\hat u$ on $I\times U\times V$ and $\C$ on $(0,\infty)\times J^+$ imply
        \begin{align}
            \liminf_{k\to \infty}
            \hat T_{s_k}\circ T_{t_k}u_{x_k}(y_k)
            \geq \liminf_{k\to \infty} T_{t_k}u_{x_k}(z)-c_{s_k}(y_k,z) &= T_tu_x(z)-c_s(y,z) \nonumber
            \\
            &=\hat T_s\circ T_tu_x(y). \label{eqdc}
        \end{align}
        On the other hand, since $d_h(y_k,z_k)\leq Cs_0^{1-p}\leq d_h(V,\partial V_0)$, it follows that any subsequence $z_{j}$ of $z_k$ has a further subsequence $z_m$ that converges to some $\tilde z\in V_0\cap J^+(y)$. Then the continuity of both $\hat u$ on $I\times U\times V$ and $\C$ on $(0,\infty)\times J^+$ gives
        \begin{align*}
            \hat T_s\circ T_tu_x(y) \geq T_tu_x(\tilde z)-c_s(y,\tilde z) 
            )
            &=\lim_{m\to \infty} T_{t_m}u_{x_{m}}(z_{m})-c_{s_{m}}(y,z_{m})
            \\
            &= \lim_{m\to \infty} \hat T_{s_{m}}\circ T_{t_{m}}u_{x_{m}}(y_{m}).
        \end{align*}   
        Combining with \eqref{eqdc}, we see that equality must hold throughout, and thus $\tilde z=z$ is optimal.
        Since we have shown that any subsequence $z_j$ of $z_k$ has a subsequence $z_m$ that converges to $z$, the whole sequence must converge. This concludes the proof of (ii) if $s>0$. Suppose $s=0$. If $s_k=0$, the statement is trivial. If $s_k>0$, then the claim and Lemma \ref{lk} show that $d_h(y_k,z_k)\leq Cs_k^{1-p}$, and thus $z_k\to y$.
\end{proof}

\subsection{On the validity of the assumptions}

For a single function $u:M\to \overbar \R$, we can give a condition when the requirements of Theorem \ref{thmza} are satisfied. Recall the definition of causal compactness.

\begin{theorem}\label{m}
    Let $u:M\to \overbar \R$ be a function with $u\equiv +\infty$ outside of a causally compact set $K$, and let $y_0\in I^+(x_0)$ and $t_0>0$. Suppose that $u(x_0)$ and $T_{t_0}u(y_0)$ are finite. Additionally, assume that 
    $T_{t_0}u(y_0)=u(x_0)+c_{t_0}(x_0,y_0)$. Let $\gamma:[0,t_0]\to M$ be a maximizing geodesic connecting $x_0$ with $y_0$ and let $t_1\in (0,t_0)$. Then the Lax-Oleinik semigroup evolution $Tu$ is locally semiconcave on a neighbourhood of $(t_1,\gamma_{t_1})$, and differentiable at $(t_1,\gamma_{t_1})$ with
    \begin{align}
        d_{\gamma_{t_1}}(T_{t_1}u) \in \op{int}(\C_{\gamma_{t_1}}^*). \label{eqzn}
    \end{align}
    In particular, the hypothesis of Theorem \ref{main} are satisfies at $(t_1,\gamma_{t_1})$.
\end{theorem}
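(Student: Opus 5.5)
Set $y_1:=\gamma_{t_1}$; by the semigroup property the pair $(x_0,y_1)$ is calibrated at time $t_1$. The plan has four steps: calibration, a localization of near-optimal initial points, semiconcavity as an infimum of semiconcave functions, and differentiability via a lower support function.

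\textbf{Step 1: calibration.} From Lemma \ref{semigroup} and $c_{t_0}(x_0,y_0)=c_{t_1}(x_0,y_1)+c_{t_0-t_1}(y_1,y_0)$ we get
\[
T_{t_0}u(y_0)\le T_{t_1}u(y_1)+c_{t_0-t_1}(y_1,y_0)\le u(x_0)+c_{t_1}(x_0,y_1)+c_{t_0-t_1}(y_1,y_0)=T_{t_0}u(y_0).
\]
Hence $T_{t_1}u(y_1)=u(x_0)+c_{t_1}(x_0,y_1)$. Since $y_0\in I^+(x_0)$, the geodesic $\gamma$ is timelike. The same argument applies with $x_0$ replaced by any optimal $x$ for $T_{t_1}u(y_1)$.

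\textbf{Step 2: localization of optimal points.} For $(t,y)$ near $(t_1,y_1)$ I will show that the infimum defining $T_tu(y)$ can be restricted to $x$ in a fixed compact set $K'\subseteq K\cap J^-(\text{compact})$ with $K'\times\{y\}\subseteq I^+$, uniformly. Causal compactness of $K$ gives compactness of $K\cap J^-(\overline W)$ for a compact neighbourhood $W$ of $y_1$, so minimizing sequences have limits.

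The key point, and the main obstacle, is to rule out limits $x$ with $\ell(x,y)=0$, i.e.\ null-related or degenerate minimizers. I would argue as in Lemma \ref{lbb}(ii), but using the forward extension to $y_0$. Suppose $x$ were optimal for $T_{t_1}u(y_1)$ with $\ell(x,y_1)=0$. By Step 1, $x$ is also optimal for $T_{t_0}u(y_0)$, reached via the concatenation of a null curve from $x$ to $y_1$ and $\gamma|_{[t_1,t_0]}$. Equality $c_{t_0}(x,y_0)=c_{t_1}(x,y_1)+c_{t_0-t_1}(y_1,y_0)$ then forces this broken curve to be a maximizing geodesic, which is impossible when a null piece meets the timelike piece $\gamma$ with a corner: the broken curve could be strictly lengthened. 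If instead $x=y_1$, the same strict concavity of $\ell^p$ (with $p<1$) in the time parameter contradicts equality. A compactness and lower semicontinuity argument then extends this to all $(t,y)$ near $(t_1,y_1)$, in the same way as the proof of Theorem \ref{lzb}: a contradiction sequence of near-optimal $x_k$ with $\ell(x_k,y_k)\to0$ has an optimal limit with $\ell=0$.

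\textbf{Step 3: semiconcavity.} Near $(t_1,y_1)$ we now have
\[
T_tu(y)=\inf_{x\in K'}\big(u(x)+c_t(x,y)\big)
\]
with $(t,x,y)$ ranging in a compact subset of $(0,\infty)\times I^+$. Theorem \ref{A}(a) gives a uniform local semiconcavity constant for $(t,y)\mapsto c_t(x,y)$, $x\in K'$. An infimum of uniformly semiconcave functions that is finite is semiconcave, and finiteness follows from Step 2. Hence $Tu$ is locally semiconcave near $(t_1,y_1)$.

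\textbf{Step 4: differentiability.} From Step 1 we have the lower support
\[
T_{t_1}u(y)\ge T_{t_0}u(y_0)-c_{t_0-t_1}(y,y_0),
\]
with equality at $y_1$. The geodesic from $y_1$ to $y_0$ is a sub-arc of $\gamma$, and sub-arcs of maximizers are uniquely maximizing in the interior (standard). So $c_{t_0-t_1}(\cdot,y_0)$ is differentiable at $y_1$ by Theorem \ref{A}(b), and $T_{t_1}u$ is therefore subdifferentiable at $y_1$. A semiconcave function with a subdifferential is differentiable there. The same works in the $t$-variable using $(t,y)\mapsto T_{t_0}u(y_0)-c_{t_0-t}(y,y_0)$, giving differentiability of $Tu$ at $(t_1,y_1)$.

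The derivative equals $\frac{\partial L}{\partial v}(y_1,\dot\gamma_{t_1})$, which lies in $\op{int}(\C^*_{y_1})$ by Proposition \ref{pro2} since $\dot\gamma_{t_1}$ is timelike. This gives \eqref{eqzn}. Since $\partial^+T_{t_1}u(y_1)$ is then a single point of $\op{int}(\C^*_{y_1})$, \eqref{eqzv} holds and the hypotheses of Theorem \ref{main} are satisfied.
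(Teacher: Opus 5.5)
Your outline matches the paper's: calibration, localization of near-minimizers away from the past null cone (Lemma \ref{lad}), semiconcavity as an infimum over a compact subset of $I^+$, and differentiability from a lower support. Steps 1, 3, 4 and the geometric contradiction in Step 2 are sound.

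The gap is the compactness step that closes Step 2. You assert that near-optimal points $x_k$ for $T_{t_k}u(y_k)$ with $\ell(x_k,y_k)\to 0$ have an \emph{optimal} limit, ``as in Theorem \ref{lzb}''. That argument works in Theorem \ref{lzb} because there $f_k\to f$ uniformly with $f$ continuous. Here $u:M\to\overbar\R$ is arbitrary. Concluding $u(x)+c_{t_1}(x,y_1)\le\liminf_k\bigl(u(x_k)+c_{t_k}(x_k,y_k)\bigr)$ would require lower semicontinuity of $u$, which is not assumed; the limit may even have $u(x)=+\infty$. So the reduction to an optimal $x$ with $\ell(x,y_1)=0$ is unjustified.

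The missing idea is to cancel $u(x_k)$ before passing to the limit. For large $k$, $u(x_k)$ is finite: Lemma \ref{www} gives finiteness of $T_{t_k}u(y_k)$, and $x_k\in J^-(y_0)$. Combine three facts:
\begin{itemize}
\item near-optimality, $u(x_k)+c_{t_k}(x_k,y_k)-\frac{1}{k}\le T_{t_k}u(y_k)$;
\item the definition of the semigroup, $T_{t_0}u(y_0)\le u(x_k)+c_{t_0}(x_k,y_0)$;
\item your calibration identity, $T_{t_0}u(y_0)=T_{t_1}u(y_1)+c_{t_0-t_1}(y_1,y_0)$.
\end{itemize}
Together they give
\[
c_{t_k}(x_k,y_k)+c_{t_0-t_1}(y_1,y_0)\le T_{t_k}u(y_k)-T_{t_1}u(y_1)+c_{t_0}(x_k,y_0)+\frac{1}{k},
\]
in which $u$ no longer appears.

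Passing to the limit now needs only two things. The first is continuity of $c$ on $(0,\infty)\times J^+$. The second is $\limsup_k T_{t_k}u(y_k)\le T_{t_1}u(y_1)$, which follows from $T_tu(y)\le u(x_0)+c_t(x_0,y)$ (valid near $y_1\in I^+(x_0)$) with equality at $(t_1,y_1)$. The paper gets this differently: it proves differentiability, hence continuity, of $Tu$ at $(t_1,y_1)$ first, in Lemma \ref{xx}. It uses the upper support $u(x_0)+c_t(x_0,\cdot)$ and the lower support $T_{t_0}u(y_0)-c_{t_0-t}(\cdot,y_0)$, without any semiconcavity.

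The limit yields $c_{t_1}(x,y_1)+c_{t_0-t_1}(y_1,y_0)\le c_{t_0}(x,y_0)$, and hence equality, with $\ell(x,y_1)=0$. Your broken-curve argument then gives the contradiction exactly as in Lemma \ref{lad}. With this repair, your ordering of the remaining steps (localize, then semiconcavity, then differentiability from semiconcavity plus a subdifferential) goes through.
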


\begin{lemma}\label{xx}
      Under the assumptions of Theorem \ref{m}, even without the assumption on the existence of $K$, we have that $Tu$ is differentiable at $(t_1,\gamma_{t_1})$ with
    \[
        d_{\gamma(t_1)}(T_{t_1}u) \in \op{int}(\C_{\gamma_{t_1}}^*).
    \]
\end{lemma}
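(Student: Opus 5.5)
We follow the classical ``calibrated curve'' argument from weak KAM theory: squeeze $Tu$ between an upper barrier that is differentiable at the point and a lower barrier, and conclude that $Tu$ is differentiable there with the expected derivative. Set $z:=\gamma_{t_1}$.

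\emph{Upper barrier.} Since $T_{t_0}u(y_0)=u(x_0)+c_{t_0}(x_0,y_0)$ is finite, the semigroup property (Lemma~\ref{semigroup}) and the additivity $c_{t_0}(x_0,y_0)=c_{t_1}(x_0,z)+c_{t_0-t_1}(z,y_0)$ along the maximizing geodesic give
\[
T_{t_1}u(z)=u(x_0)+c_{t_1}(x_0,z),\qquad T_{t_0}u(y_0)=T_{t_1}u(z)+c_{t_0-t_1}(z,y_0).
\]
So $\gamma$ is calibrated on both sides of $z$. Since $y_0\in I^+(x_0)$, the geodesic $\gamma$ is timelike, so $z\in I^+(x_0)$ and $y_0\in I^+(z)$. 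For all $(t,y)$ near $(t_1,z)$ we then have
\[
T_tu(y)\le \Phi(t,y):=u(x_0)+c_t(x_0,y),
\]
with equality at $(t_1,z)$. By Theorem~\ref{A}, $\Phi$ is locally semiconcave near $(t_1,z)$, because $(x_0,z)\in I^+$. Moreover, $\gamma|_{[0,t_1]}$ is the unique maximizing geodesic from $x_0$ to $z$: an interior point of a maximizing timelike geodesic is not conjugate and no other maximizer can branch off, otherwise concatenating it with $\gamma|_{[t_1,t_0]}$ would give a broken maximizer, contradicting Theorem~\ref{asdfghj}. Hence $\Phi$ is differentiable at $(t_1,z)$ with $d_z\Phi(t_1,\cdot)=\frac{\partial L}{\partial v}(z,\dot\gamma_{t_1})$, and $\partial^+Tu(t_1,z)\subseteq\{d\Phi(t_1,z)\}$.

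\emph{Lower barrier.} The second identity gives, for $(t,y)$ near $(t_1,z)$,
\[
T_tu(y)\ge T_{t_0}u(y_0)-c_{t_0-t}(y,y_0)=:\Psi(t,y),
\]
using $T_{t_0}u(y_0)=T_{t_0-t}(T_tu)(y_0)\le T_tu(y)+c_{t_0-t}(y,y_0)$. Equality holds at $(t_1,z)$. Since $(z,y_0)\in I^+$, the function $-c$ is locally semiconvex there, and by the same uniqueness argument (this time for $\gamma|_{[t_1,t_0]}$) $\Psi$ is differentiable at $(t_1,z)$. By Theorem~\ref{A}(b), both derivatives are expressed through $\dot\gamma_{t_1}$, so they coincide. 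Note that the $t$-components match as well, because both equal $(p-1)$ times the common constant $|\dot\gamma|_g^p$, up to sign.

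\emph{Conclusion.} Because $\Psi\le Tu\le\Phi$ near $(t_1,z)$, both barriers are differentiable at $(t_1,z)$ with equal values and equal derivatives there, so $Tu$ is differentiable at $(t_1,z)$ with this common derivative. In particular,
\[
d_z(T_{t_1}u)=\frac{\partial L}{\partial v}(z,\dot\gamma_{t_1})\in\op{int}(\C_z^*)
\]
by Proposition~\ref{pro2}, since $\dot\gamma_{t_1}\in\op{int}(\C_z)$.

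The main obstacle is the uniqueness of maximizing geodesics through the interior point $z$, which is needed for the differentiability of $c$ via Theorem~\ref{A}(b). The broken-geodesic argument handles this: a second maximizer from $x_0$ to $z$, followed by $\gamma|_{[t_1,t_0]}$, would be a maximizing curve from $x_0$ to $y_0$ with a corner at $z$, and it is not a pregeodesic, contradicting Theorem~\ref{asdfghj}. We also need the finiteness issues handled by the conventions of Definition~\ref{def2}; the finiteness of $u(x_0)$ and $T_{t_0}u(y_0)$ ensures both barriers are real-valued near $(t_1,z)$.
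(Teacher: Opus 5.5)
Your proof is correct. Its skeleton is the same as the paper's: an upper barrier $\Phi$ through $x_0$ and a lower barrier $\Psi$ through $y_0$, both touching $Tu$ at $(t_1,z)$. The paper writes the lower barrier as $\hat T_{t_0-t}T_{t_0}u\le T_tu$, but it only uses that this function lies above your $\Psi$ with equality at $(t_1,z)$.

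The difference is in the key step. The paper needs only one-sided information:
\begin{itemize}
\item By Theorem \ref{A}(b), the single geodesic $\gamma|_{[0,t_1]}$ gives a super-differential of $\Phi$ at $(t_1,z)$, and $\gamma|_{[t_1,t_0]}$ gives a sub-differential of $\Psi$.
\item By touching, these become a super- and a sub-differential of $Tu$ at $(t_1,z)$.
\item A function that is both super- and sub-differentiable at a point is differentiable there. Indeed, if $p\in\partial^+f(x)$ and $q\in\partial^-f(x)$, then $(q-p)(v)\le o(|v|_h)$ for all $v$, so $p=q$.
\end{itemize}
So the paper needs neither uniqueness of the maximizing segments, nor the ``if and only if'' part of Theorem \ref{A}(b). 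It also does not need your explicit comparison of the $t$- and $y$-components, since their agreement is forced.

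Your route instead proves full differentiability of both barriers via the broken-geodesic argument, and that argument is valid. A second constant-speed maximizer on $[0,t_1]$ has the same $g$-speed $\ell(x_0,z)/t_1$. It therefore reaches $z$ with a different direction, creating a corner in a maximizing curve from $x_0$ to $y_0$, which contradicts Theorem \ref{asdfghj}. This costs an extra geometric step, but it shows \emph{why} the two one-sided differentials agree.

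There are two small slips:
\begin{itemize}
\item From $Tu\le\Phi$ with equality at $(t_1,z)$ you get $d\Phi(t_1,z)\in\partial^+Tu(t_1,z)$, not the inclusion $\partial^+Tu(t_1,z)\subseteq\{d\Phi(t_1,z)\}$. This is harmless, since your conclusion goes through the squeeze instead.
\item Both $t$-derivatives equal exactly $(p-1)|\dot\gamma|_g^p$; there is no ``up to sign''.
\end{itemize}
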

\begin{proof}
    Set $v:=T_{t_0}u$. We know from Lemma \ref{semigroup}(i) and (ii) that
    \begin{align}
        \hat T_{t_0-t}v = \hat T_{t_0-t}\circ T_{t_0-t}\circ T_{t} u\leq T_{t}u \label{eqzk}
    \end{align}
    for all $0\leq t\leq t_0$ However, since $\gamma$ is a maximizing geodesic, we also have
    \begin{align*}
        \hat T_{t_0-t_1}v(\gamma_{t_1}) - T_{t_1}u(\gamma_{t_1})
        &\geq \bigl(v(y_0) - c_{t_0-t_1}(\gamma_{t_1},y_0)\bigr) -\bigl(u(x_0)+c_{t_1}(x_0,\gamma_{t_1})\bigr)
        \\[5pt]
        & = c_{t_0}(x_0,y_0)- c_{t_0-t_1}(\gamma_{t_1},y_0)
        -c_{t_1}(x_0,\gamma_{t_1})
        \\[5pt]
        &=0.
    \end{align*}
    Combining this with \eqref{eqzk}, we conclude that $\hat T_{t_0-t_1}v(\gamma_{t_1}) = T_{t_1}u(\gamma_{t_1})$ and that 
    \begin{align}
        \hat T_{t_0-t_1}v(\gamma_{t_1}) &= v(y_0)-c_{t_0-t_1}(\gamma_{t_1},y_0) \quad \text{ and } 
        \\[5pt]
        T_{t_1}u(\gamma_{t_1}) &= u(x_0)+c_{t_1}(x_0,\gamma_{t_1}). \label{eqzl}
    \end{align}
    Since $\C(\cdot,x_0,\cdot)$ is super-differentiable at $(t_1,\gamma_{t_1})$ with
    \begin{align}
        \frac{\partial L}{\partial v}(\gamma_{t_1},\dot \gamma_{t_1})\in  \partial^+\C(t_1,x_0,\cdot)(\gamma_{t_1})\cap \op{int}(\C_{\gamma_{t_1}}^*)\label{eqai}
    \end{align}
    it follows from \eqref{eqzl} that also $Tu$ is super-differentiable at $\gamma_{t_1}$, and that  \eqref{eqai} is a super-differential for $T_{t_1}u$ at $\gamma_{t_1}$. 

    Similarly, we obtain that $(t,y)\mapsto \hat T_{t_0-t}v(y)$ is sub-differentiable at $\gamma_{t_1}$. This, however, implies by \eqref{eqzk} and $\hat T_{t_0-t_1}v(\gamma_{t_1}) = T_{t_1}u(\gamma_{t_1})$ that $Tu$ is also sub-differentiable at $(t_1,\gamma_{t_1})$. In particular, being both super- and sub-differentiable, $Tu$ (and hence $T_{t_1}u$) is differentiable at $(t_1,\gamma_{t_1})$, and the differential of $T_{t_1}u$ at $\gamma_{t_1}$ agrees with the unique super-differential \eqref{eqai}.
\end{proof}

\begin{lemma}[Local boundedness (cf.\ \cite{FathiHJ}, Theorem 6.2)] \label{www}
    Let $u:M\to \overbar \R$ be any function, and let $x_0,y_0\in M$ and $t_0>0$. Suppose that $u(x_0)$ and $T_{t_0}u(y_0)$ are finite. Then the function $Tu$ is locally bounded on the set $(0,t_0)\times (J^+(x_0)\cap J^-(y_0)).$
\end{lemma}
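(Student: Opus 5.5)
The plan is to bound $Tu$ from above and below separately on compact subsets of $(0,t_0)\times (J^+(x_0)\cap J^-(y_0))$. Since "locally bounded" is a local statement, I fix $(t_1,y_1)$ in this set and look for a neighbourhood on which $Tu$ is bounded. That neighbourhood must also be intersected with the set, because outside it (for instance outside $J^+(x_0)$) the function may well be infinite. Note also that $T_{t_0}u(y_0)>-\infty$ means $u(z)+c_{t_0}(z,y_0)>-\infty$ for every $z$.

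\textbf{Upper bound.} For $(t,y)$ with $y\in J^+(x_0)$ we have $T_tu(y)\le u(x_0)+c_t(x_0,y)=u(x_0)-t^{1-p}\ell(x_0,y)^p\le u(x_0)$, since $c_t\le 0$ on $J^+$. This is finite, and it is uniform over the whole set, because $u(x_0)$ is finite and $x_0\in J^-(y)$.

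\textbf{Lower bound.} Here I use the semigroup property, Lemma \ref{semigroup}, in the form
\[
T_{t_0}u(y_0)\le T_tu(y)+c_{t_0-t}(y,y_0)\qquad\text{for } y\in J^-(y_0),\ t<t_0,
\]
which holds because $T_{t_0}=T_{t_0-t}\circ T_t$. Rearranging,
\[
T_tu(y)\ge T_{t_0}u(y_0)+(t_0-t)^{1-p}\ell(y,y_0)^p\ge T_{t_0}u(y_0),
\]
again a uniform finite bound.

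So in fact $T_{t_0}u(y_0)\le T_tu\le u(x_0)$ on the entire set $(0,t_0)\times (J^+(x_0)\cap J^-(y_0))$. The only care needed is with the conventions of Definition \ref{def2} when some values are $\pm\infty$. I would write the semigroup inequality directly from the definition: for $y\in J^-(y_0)$, the point $y$ is admissible in $T_{t_0-t}(T_tu)(y_0)$, so $T_{t_0}u(y_0)\le T_tu(y)+c_{t_0-t}(y,y_0)$ with $c_{t_0-t}(y,y_0)$ finite. If $T_tu(y)$ were $-\infty$, this inequality would force $T_{t_0}u(y_0)=-\infty$, which is excluded. Hence the lower bound also rules out the value $-\infty$, and the upper bound rules out $+\infty$.

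The main (mild) obstacle is exactly this bookkeeping with the $\pm\infty$ conventions together with the fact that $J^+(x_0)\cap J^-(y_0)$ need not be open. Because the bounds are global on the set, no compactness argument should be needed.
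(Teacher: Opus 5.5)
Your argument is correct and is essentially the paper's proof: the upper bound comes from testing the infimum defining $T_tu(y)$ at $x_0$, and the lower bound comes from $T_{t_0}=T_{t_0-t}\circ T_t$ tested at $y\in J^-(y_0)$. The only difference is that you use $c_\tau\le 0$ on $J^+$ to get the global bounds $T_{t_0}u(y_0)\le T_tu\le u(x_0)$ on the whole set, whereas the paper bounds the cost terms by their suprema over a compact set $K$ via continuity of $c$; as you observe, that compactness step is not actually needed.
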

\begin{proof}
    Let $K\subseteq (0,t_0)\times (J^+(x_0)\cap J^-(y_0))$ be a compact set. The definition of the Lax-Oleinik semigroup yields for $(t,y)\in K$ that
    \begin{align}
         T_tu(y) \leq u(x_0)+\sup_{(t',y')\in K} c_{t'}(x_0,y'). \label{qwert1}
    \end{align}
    On the other hand, the semigroup property yields for $(t,y)\in K$ that
    \begin{align}
        T_{t_0}u(y_0)\leq T_tu(y)+c_{t_0-t}(y,y_0)\leq T_tu(y)+\sup_{(t',y')\in K} c_{t_0-t'}( y',y_0). \label{qwert}
    \end{align}
    Combining both inequalities immediately gives the claim. Notice that the suprema in \eqref{qwert1} and \eqref{qwert} are finite thanks to the continuity of $\C$ on $(0,\infty)\times J^+$ and the compactness of $K$.
\end{proof}

\begin{definition}[Calibrated curve]
    Let $u:M\to \overbar \R$ be any function, and let $\gamma:[0,b]\to M$ be a curve. We say that $\gamma$ is \emph{$u$-calibrated} if
    \begin{align*}
        T_tu(\gamma_t)=T_su(\gamma_s)+c_{t-s}(\gamma_s,\gamma_t) \text{ for any } 0\leq s \leq t \leq b.
    \end{align*}
\end{definition}

\begin{lemma}[Condition for calibratedness (cf.\ \cite{FathiHJ}, Proposition 7.5)] \label{w}
    Let $u:M\to \overbar \R$ be any function, and let $y_0\in J^+(x_0)$ and $t_0>0$. Suppose that $u(x_0)$ and $T_{t_0}u(y_0)$ are finite. Additionally, assume that 
    $T_{t_0}u(y_0)=u(x_0)+c_{t_0}(x_0,y_0)$. Let $\gamma:[0,t_0]\to M$ be a maximizing geodesic connecting $x_0$ with $y_0$.
    Then $\gamma$ is $u$-calibrated.
    An analogous result holds for the backward semigroup.
\end{lemma}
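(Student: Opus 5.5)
We prove the two inequalities ``$\leq$'' and ``$\geq$'' in the calibration identity separately. Throughout, the maximizing geodesic $\gamma:[0,t_0]\to M$ is parametrized on $[0,t_0]$, so that for $0\leq s\leq t\leq t_0$ the restriction $\gamma|_{[s,t]}$ is maximizing and, by Corollary \ref{a} and Lemma \ref{lt}, $L$-minimizing. In particular
\[
c_{t_0}(x_0,y_0)=c_s(x_0,\gamma_s)+c_{t-s}(\gamma_s,\gamma_t)+c_{t_0-t}(\gamma_t,y_0).
\]
Indeed, $c_\tau(x,y)=-\tau^{1-p}\ell(x,y)^p$, and the lengths satisfy $\ell(\gamma_a,\gamma_b)=(b-a)|\dot\gamma|_g$ along a maximizing geodesic. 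This additivity is the only geometric input needed.

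\textbf{Upper bound.} For $0\leq s\leq t\leq t_0$, the semigroup property (Lemma \ref{semigroup}(i)) together with the definition of $T_{t-s}$ gives
\[
T_tu(\gamma_t)=T_{t-s}(T_su)(\gamma_t)\leq T_su(\gamma_s)+c_{t-s}(\gamma_s,\gamma_t).
\]
Here we use $\gamma_s\in J^-(\gamma_t)$, so the cost term is finite. Applying the same inequality with $s=0$ gives $T_su(\gamma_s)\leq u(x_0)+c_s(x_0,\gamma_s)$, and applying it on the interval $[t,t_0]$ gives $T_{t_0}u(y_0)\leq T_tu(\gamma_t)+c_{t_0-t}(\gamma_t,y_0)$.

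\textbf{Finiteness and chaining.} Chaining the three inequalities and using the additivity above yields
\[
T_{t_0}u(y_0)\leq T_tu(\gamma_t)+c_{t_0-t}(\gamma_t,y_0)\leq T_su(\gamma_s)+c_{t-s}+c_{t_0-t}\leq u(x_0)+c_{t_0}(x_0,y_0)=T_{t_0}u(y_0).
\]
All terms are finite. The values $T_tu(\gamma_t)$ are bounded above by $u(x_0)+c_t(x_0,\gamma_t)<\infty$, and bounded below because otherwise $T_{t_0}u(y_0)=-\infty$. Here one must handle the conventions for $\pm\infty$ carefully; this is essentially the argument of Lemma \ref{www}. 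Since the two ends of the chain coincide, every inequality is an equality. Subtracting the finite quantities then gives $T_tu(\gamma_t)=T_su(\gamma_s)+c_{t-s}(\gamma_s,\gamma_t)$, which is exactly the calibration property.

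\textbf{Cases $s=0$ and $t=t_0$, and the backward semigroup.} The edge cases are covered because $T_0u=u$. For the backward semigroup the argument is symmetric: use Lemma \ref{semigroup} for $\hat T$, with reversed inequalities and suprema. The main (mild) obstacle is the bookkeeping with infinite values to ensure that every quantity in the chain is finite before subtracting. No deeper difficulty is expected.
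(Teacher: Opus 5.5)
Your proof is correct and follows the paper's argument: you take the three semigroup inequalities along $\gamma$, use additivity of $c$ along the maximizing geodesic, and conclude that every inequality is an equality because both ends of the chain equal $T_{t_0}u(y_0)$. Your pointwise finiteness check (upper bound via $u(x_0)+c_t(x_0,\gamma_t)$, lower bound via finiteness of $T_{t_0}u(y_0)$) is a more direct substitute for the paper's appeal to Lemma \ref{www}, and it also covers the endpoint times, which that lemma does not.
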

\begin{proof}
    Let $0\leq s\leq t\leq t_0$. By the semigroup property, we have
    \begin{align}
        &T_su(\gamma_s)\leq u(x_0)+c_s(x_0,\gamma_s), \nonumber
        \\[10pt]
        & T_tu(\gamma_t)\leq T_su(\gamma_s)+c_{t-s}(\gamma_s,\gamma_t), \label{x}
        \\[10pt]
        & T_{t_0}u(y_0)\leq T_tu(\gamma_t)+c_{t_0-t}(\gamma_t,y_0). \nonumber
    \end{align}
    Adding these three inequalities and canceling terms (noting that all involved quantities are finite by the above lemma), we get
    \begin{align*}
        T_{t_0}u(y_0)&\leq u(x_0)+c_s(x_0,\gamma_s)+c_{t-s}(\gamma_s,\gamma_t)+c_{t_0-t}(\gamma_t,y_0)
        \\[10pt]
        &=u(x_0)+c_{t_0}(x_0,y_0),
    \end{align*}
    where the last inequality follows from the fact that $\gamma$ is a maximizing geodesic.
    By assumption, this inequality must actually hold as an equality, which implies that equality must also hold in \eqref{x} (and also in the other two inequalities). This completes the proof of the lemma.

    The proof for the backward semigroup is completely analogous.
\end{proof}

\begin{lemma}\label{lad}
    Under the hypothesis of Theorem \ref{m}, there exists $\ep>0$, an open neighbourhood $V$ of $\gamma_{t_1}$ and an open interval $I$ around $t_1$ such that, for all $(t,y)\in I\times V$, we have
\begin{align*}
    T_tu(y)=\inf\{u(x)+c_{t}(x,y)\mid x\in K,\ \ell(x,y)\geq \ep\}.
\end{align*}
\end{lemma}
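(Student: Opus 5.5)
We argue by contradiction combined with a compactness argument, using the calibration of $\gamma$ (Lemma \ref{w}) together with the fact that $T_{t_1}u$ is differentiable at $\gamma_{t_1}$ with differential in $\op{int}(\C^*_{\gamma_{t_1}})$ (Lemma \ref{xx}).

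\emph{Step 1: the infimum is finite and only $x\in K$ matter.} Since $u\equiv+\infty$ off $K$, we have $T_tu(y)=\inf\{u(x)+c_t(x,y)\mid x\in K\cap J^-(y)\}$. By Lemma \ref{www}, $Tu$ is locally bounded near $(t_1,\gamma_{t_1})$; note that $\gamma_{t_1}$ lies in the interior of $J^+(x_0)\cap J^-(y_0)$ because $x_0\ll\gamma_{t_1}\ll y_0$. Fix a compact neighbourhood $I\times \overline V$ of $(t_1,\gamma_{t_1})$ on which $Tu$ is bounded. By causal compactness, $K\cap J^-(\overline V)$ is compact. Hence $\ell$ and $c_t$ are bounded on the relevant pairs $(x,y)$. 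It follows that almost-minimizers $x$ satisfy $u(x)\le C$, and also $u(x)\geq T_tu(y)-c_t(x,y)\ge -C$, so they have bounded values of $u$.

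\emph{Step 2: contradiction setup.} Suppose no such $\ep$, $I$, $V$ exist. Then there are $(t_k,y_k)\to(t_1,\gamma_{t_1})$ and points $x_k\in K\cap J^-(y_k)$ with $\ell(x_k,y_k)\le 1/k$ and
\[
u(x_k)+c_{t_k}(x_k,y_k)\le T_{t_k}u(y_k)+\tfrac1k ,
\]
since otherwise the infimum restricted to $\ell\ge\ep$ would agree with the full one. By compactness, $x_k\to \bar x\in K\cap J^-(\gamma_{t_1})$ with $\ell(\bar x,\gamma_{t_1})=0$. Because $c_{t_k}(x_k,y_k)\to 0$, this gives $u(x_k)\le T_{t_k}u(y_k)+o(1)$.

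\emph{Step 3: the main obstacle.} We need a contradiction, but $u$ is not lower semicontinuous, so we cannot pass to the limit in $u(x_k)$. The plan is instead to use $T_{t_1}u$ along the nearly null segment from $x_k$ to $y_k$. Pick a time $\tau\in(0,t_1)$ close to $t_1$. By the semigroup property,
\[
T_{\tau}u(y)\le u(x)+c_{\tau}(x,y),
\]
and $T_{t_k}u(y_k)\le T_\tau u(z)+c_{t_k-\tau}(z,y_k)$ for any $z\in J^-(y_k)$. I would take $z$ close to $y_k$ along a timelike direction to exploit the $p<1$ gain, as in Lemma \ref{lbb}(i)/(ii).

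More concretely, I expect the cleanest route to be the following. The near-optimality of $x_k$ says that the function $w\mapsto u(x_k)+c_{t_k}(x_k,w)$ touches $T_{t_k}u$ from above at $y_k$ up to $1/k$. Mimic Lemma \ref{lbb}(ii): vary $y_k$ by $\sigma(r,t_k)$ in the direction $\xi$ with $g(\dot\gamma^k,\xi)<0$, where $\gamma^k$ is the maximizing geodesic from $x_k$ to $y_k$ (a nearly null direction). This yields
\[
T_{t_k}u(\sigma(r,t_k))\le T_{t_k}u(y_k)+\tfrac1k - c\,(\text{gain}\sim r^{p/2})
\]
uniformly in $k$, provided $\dot\gamma^k$ stays bounded and away from $0$; the latter holds since $x_k\neq y_k$ in the limit, because $\bar x\in K$ and $u(\bar x)$ cannot give $T_{t_1}u(\gamma_{t_1})$ at zero cost without contradicting the differentiability in Lemma \ref{xx}. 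On the other hand, $Tu$ is differentiable at $(t_1,\gamma_{t_1})$, but only pointwise. To obtain a uniform Lipschitz bound near that point, I would use the calibrated-curve squeeze of Lemma \ref{xx}: $\hat T_{t_0-t}v\le T_tu$ with equality at $(t_1,\gamma_{t_1})$. Together with the superdifferential bound and local boundedness from Step 1, this should give $T_{t_k}u(\sigma(r,t_k))\ge T_{t_k}u(y_k)-O(r)-o(1)$. For small fixed $r$ and large $k$, the $r^{p/2}$ gain beats the $O(r)$ loss, which is a contradiction.

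The delicate points are the uniformity of the lower Lipschitz-type bound for $T_tu$ near $(t_1,\gamma_{t_1})$, which I plan to derive from the semiconvexity of $\hat T_{t_0-t}v$ near $\gamma_{t_1}$ (valid because $y_0\in I^+(\gamma_{t_1})$, via Theorem \ref{A}(a)), and the exclusion of the degenerate case $\bar x=\gamma_{t_1}$.
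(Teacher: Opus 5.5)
Your route differs from the paper's, and its core mechanism works. The infinite slope of $\ell\mapsto\ell^p$ at $0$ (as in Lemma \ref{lbb}(i)) beats a Lipschitz lower bound, and $u(x_k)$ cancels, so the missing semicontinuity of $u$ does no harm. As written, however, there is a gap exactly where you flag a delicate point. The gain estimate from the variation of Lemma \ref{lbb}(ii) is uniform in $k$ only if $\dot\gamma^k$ stays bounded away from $0$. Your reason for excluding $\bar x=\gamma_{t_1}$ appeals to $u(\bar x)$, which, as you note yourself, says nothing about $\lim u(x_k)$.

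The fix is to drop the variation. Take a future timelike vector field $e$ near $\gamma_{t_1}$ and set $w_k:=\exp_{y_k}(r e_{y_k})$. Since $x_k\in J^-(y_k)$ and $w_k\in J^+(y_k)$, the reverse triangle inequality gives $\ell(x_k,w_k)\ge \ell(y_k,w_k)\ge cr$, with no condition on $\dot\gamma^k$. Hence
\[
T_{t_k}u(w_k)\le u(x_k)-t_k^{1-p}(cr)^p\le T_{t_k}u(y_k)+\tfrac1k+t_k^{1-p}k^{-p}-t_k^{1-p}(cr)^p .
\]
Semiconvexity of $\hat T_{t_0-t}v$ near $\gamma_{t_1}$ does not follow from Theorem \ref{A}(a), since the supremum runs over all $z$, including nearly null-related ones. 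You only need the single competitor $y_0$:
\begin{itemize}
\item by the semigroup property, $T_tu(w)\ge T_{t_0}u(y_0)-c_{t_0-t}(w,y_0)$;
\item the right-hand side is Lipschitz near $(t_1,\gamma_{t_1})$ because $(\gamma_{t_1},y_0)\in I^+$;
\item it equals $T_{t_1}u(\gamma_{t_1})$ at that point by Lemma \ref{w}.
\end{itemize}
Combine this with $\limsup_k T_{t_k}u(y_k)\le \lim_k\bigl(u(x_0)+c_{t_k}(x_0,y_k)\bigr)=T_{t_1}u(\gamma_{t_1})$. You get $t_1^{1-p}c^pr^p\le C'r$ for all small $r>0$, which is a contradiction since $p<1$.

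The paper never perturbs $y_k$. It inserts the same $x_k$ as a competitor for the endpoint, $T_{t_0}u(y_0)\le u(x_k)+c_{t_0}(x_k,y_0)$, and uses the calibration identity $T_{t_0}u(y_0)=T_{t_1}u(\gamma_{t_1})+c_{t_0-t_1}(\gamma_{t_1},y_0)$, so $u(x_k)$ cancels. Causal compactness of $K$ gives $x_k\to x$ with $\ell(x,\gamma_{t_1})=0$. Continuity of $Tu$ at $(t_1,\gamma_{t_1})$ then yields $c_{t_1}(x,\gamma_{t_1})+c_{t_0-t_1}(\gamma_{t_1},y_0)\le c_{t_0}(x,y_0)$. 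This would make a null (or constant) piece followed by a timelike piece action-minimizing, which is impossible.

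The paper's argument is purely qualitative and covers $x=\gamma_{t_1}$ automatically, but it needs the compactness of $K$. Your repaired argument is quantitative and needs only the one-point Lipschitz barrier. It does not use causal compactness at all.
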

\begin{proof}
We argue by contradiction and assume that we find a sequence $(t_k,y_k)\to (t_1,\gamma_{t_1})$ and, for each $k$, some $x_k\in K$ such that the following properties hold:
    \begin{align}
         \ell(x_k,y_k)\to 0 \quad \text{ and } \quad 
        u(x_k) +c_{t_k}(x_k,y_k)-\frac{1}{k}\leq T_{t_k}u(y_k). \label{ww'}
    \end{align}
		By Lemma \ref{www}, $T_{t_k}u(y_k)$ is finite for large values of $k$, which implies $x_k\in J^-(y_k)$.
        
 Since $T_{t_0}u(y_0)=u(x_0)+c_{t_0}(x_0,y_0)$ and $\gamma$ is a maximizing geodesic from $x_0$ to $y_0$, it follows that $\gamma$ is $Tu$-calibrated (Lemma \ref{w}), i.e.\ 
    \begin{align}
        T_{t_0}u(y_0)=T_{t_1}u(\gamma_{t_1})+c_{t_0-t_1}(\gamma_{t_1},y_0).\label{poiu1}
    \end{align}
    On the other hand, the semigroup property of $Tu$ yields
    \begin{align}
        T_{t_0}u(y_0)\leq u(x_k)+c_{t_0}(x_k,y_0) \quad \text{ for all } k\in \N. \label{poiu2}
    \end{align}
    From \eqref{ww'},\eqref{poiu1} and \eqref{poiu2} we obtain
    \begin{align*}
        u(x_k)+c_{t_k}(x_k,y_k)-\frac 1k
        \leq &\  T_{t_k}u(y_k)-T_{t_1}u(\gamma_{t_1})+T_{t_1}u(\gamma_{t_1})
        \\
        \leq &\ 
        T_{t_k}u(y_k)-T_{t_1}u(\gamma_{t_1})+ u(x_k)
        \\
        &\ +c_{t_0}(x_k,y_0)-c_{t_0-t_1}(\gamma_{t_1},y_0).
    \end{align*}
   Rearranging this inequality leads to
    \begin{align*}
        c_{t_k}(x_k,y_k)+c_{t_0-t_1}(\gamma_{t_1},y_0) \leq T_{t_k}u(y_k)-T_{t_1}u(\gamma_{t_1})+c_{t_0}(x_k,y_0).
    \end{align*}
    Since $K$ is causally compact, along a subsequence that we do not relabel, we have $x_k\to x\in K$.
    Since $J^+$ is closed, it follows $x\in J^-(\gamma_{t_1})\subseteq I^-(y_0)$, and by the continuity of the Lorentzian distance, $\ell(x,\gamma_{t_1})=0$. Taking limits in the above inequality and regarding the continuity of $Tu$ at $(t_1,\gamma_{t_1})$, we conclude
    \begin{align*}
        c_{t_1}(x,\gamma_{t_1})+c_{t_0-t_1}(\gamma_{t_1},y_0)\leq c_{t_0}(x,y_0).
    \end{align*}
    Since the reverse inequality always holds, this must be an equality. This and Lemma \ref{lt} imply that the curve obtained by concatenating a maximizing geodesic from $x$ to $\gamma_{t_1}$ in time $t_1$ with the curve $\gamma_{|[t_1,t_0]}$ is still a maximizing geodesic. However, the first curve is not timelike, since $\ell(x,\gamma_{t_1})=0$. The second curve, on the other hand, is timelike because $(\gamma_{t_1},y_0)\in I^+$. Since geodesics cannot change their causal type, this is a contradiction.
\end{proof}

\begin{proof}[Proof of Theorem \ref{m}]
    We already proved \eqref{eqzn}. We claim that $Tu$ is locally semiconcave on $I\times V$. Indeed, fix $y\in V$, and notice that the previous result combines with the continuity of the Lorentzian distance function and the causal compactness of $K$ to guarantee the existence of an open neighbourhoods $V'\subseteq V$ around $y$ and of a compact set $K'\subseteq K$ with $V'\times K'\subseteq I^+$ such that, for all $(t,y')\in I\times V'$, it holds
    \begin{align*}
    T_tu(y)=\inf\{u(x)+c_{t}(x,y)\mid x\in K'\}. 
    \end{align*}
    Since $\C$ is locally semiconcave on $(0,\infty)\times I^+$, it easily follows from the compactness of $K'$ that $Tu$ is locally semiconcave on $I\times V'$ (\cite{Fathi/Figalli}, Appendix A). Since $y$ was arbitrary, the conclusion follows.
\end{proof}

\section{Application to optimal transport – Proof of Theorem \ref{thmg}}

\begin{lemma}\label{ll}
    Under the hypothesis and notation of Theorem \ref{thmg}, let $\tau\geq 0$ such that $t-s\geq \tau$ and $\tau\leq 1-t$. Then the pair
        $(t-s)^{p-1}(\hat T_{\tau}T_{s+\tau}\varphi,\hat T_{\tau}T_{t+\tau}\varphi)$
    is a $c_1$-subsolution. Moreover, whenever $\gamma:[0,1]\to M$ is a maximizing geodesic with 
    \[
        \psi(\gamma_1) -\varphi(\gamma_0) = c_1(\gamma_0,\gamma_1),
    \]
    then also
    \begin{align}
         (t-s)^{p-1}\Bigl(\hat T_{\tau}\circ T_{t+\tau}\varphi (\gamma_t)-\hat T_{\tau}\circ T_{s+\tau}\varphi(\gamma_s)\Bigr)=c_1(\gamma_s,\gamma_t). \label{eqzm}
    \end{align}
\end{lemma}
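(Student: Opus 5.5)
The plan is to exploit the scaling identity $c_r=r^{1-p}c_1$ from Corollary \ref{a}. It shows that both claims are equivalent to statements about the unscaled functions
\[
\varphi_s':=\hat T_\tau\circ T_{s+\tau}\varphi,\qquad \psi_t':=\hat T_\tau\circ T_{t+\tau}\varphi .
\]
Namely, the subsolution property amounts to $\psi_t'(y)-\varphi_s'(x)\le c_{t-s}(x,y)$ for all $x,y$. Identity \eqref{eqzm} amounts to $\psi_t'(\gamma_t)-\varphi_s'(\gamma_s)=c_{t-s}(\gamma_s,\gamma_t)$. Throughout I would use the semigroup property (Lemma \ref{semigroup}) in the form $T_{t+\tau}\varphi=T_{t-s}(T_{s+\tau}\varphi)$.

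\textbf{Subsolution.} If $y\notin J^+(x)$, the right-hand side is $+\infty$ and there is nothing to prove, so fix $y\in J^+(x)$. It suffices to show, for every $z\in J^+(y)$,
\[
T_{t+\tau}\varphi(z)-c_\tau(y,z)\le \varphi_s'(x)+c_{t-s}(x,y),
\]
and then take the supremum over $z$. First I would note the triangle inequality $c_{t-s+\tau}(x,z)\le c_{t-s}(x,y)+c_\tau(y,z)$, which holds by concatenating curves. Next I would take, by Corollary \ref{a}, an $L$-minimizing geodesic $\sigma:[0,t-s+\tau]\to M$ from $x$ to $z$ and set $w:=\sigma_\tau$. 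Then $c_\tau(x,w)+c_{t-s}(w,z)=c_{t-s+\tau}(x,z)$. The semigroup property and the definition of $\hat T_\tau$ then give the chain
\[
T_{t+\tau}\varphi(z)\le T_{s+\tau}\varphi(w)+c_{t-s}(w,z)\le \varphi_s'(x)+c_\tau(x,w)+c_{t-s}(w,z)\le \varphi_s'(x)+c_{t-s}(x,y)+c_\tau(y,z).
\]
This is exactly the required inequality. The geometric point is that the ``detour'' $x\to w\to z$ beats $x\to y\to z$. Multiplying by $(t-s)^{p-1}$ gives (a).

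\textbf{Equality along calibrated geodesics.} First, (a) for $(\varphi,\psi)$ gives $\psi\le T_1\varphi$. Hence
\[
T_1\varphi(\gamma_1)\le \varphi(\gamma_0)+c_1(\gamma_0,\gamma_1)=\psi(\gamma_1)\le T_1\varphi(\gamma_1).
\]
Here $\varphi(\gamma_0)$ and $\psi(\gamma_1)$ are finite: $c_1(\gamma_0,\gamma_1)$ is finite because $\gamma$ is causal, and the convention $\pm\infty\mp\infty=-\infty$ rules out infinite values in the equality. So Lemma \ref{w} applies and $\gamma$ is $\varphi$-calibrated: $T_r\varphi(\gamma_r)=T_q\varphi(\gamma_q)+c_{r-q}(\gamma_q,\gamma_r)$ for $0\le q\le r\le 1$.

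I then claim $\varphi_s'(\gamma_s)=T_s\varphi(\gamma_s)$. The inequality "$\le$" follows from Lemma \ref{semigroup}(ii), since $\hat T_\tau T_\tau(T_s\varphi)\le T_s\varphi$. The inequality "$\ge$" follows by testing $z=\gamma_{s+\tau}$ in the supremum and using calibration:
\[
T_{s+\tau}\varphi(\gamma_{s+\tau})-c_\tau(\gamma_s,\gamma_{s+\tau})=T_s\varphi(\gamma_s).
\]
Likewise $\psi_t'(\gamma_t)=T_t\varphi(\gamma_t)$, which is where $t+\tau\le 1$ is used. One more application of calibration gives
\[
\psi_t'(\gamma_t)-\varphi_s'(\gamma_s)=c_{t-s}(\gamma_s,\gamma_t)=(t-s)^{1-p}c_1(\gamma_s,\gamma_t),
\]
and this is \eqref{eqzm}.

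\textbf{Main obstacle.} The only delicate point is bookkeeping with $\pm\infty$ and the conventions of Definition \ref{def2}. In (a) one has to check that each inequality in the chain is legitimate when some of the values are infinite. For example, if $\varphi_s'(x)=+\infty$ the claim is trivial, and if $\varphi_s'(x)=-\infty$ then $T_{s+\tau}\varphi(w)=-\infty$ forces the left-hand side to be $-\infty$ as well. I would dispose of these cases first and then run the argument above with finite quantities.
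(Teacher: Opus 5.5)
Your proof is correct. The equality part is the argument the paper has in mind, since it points to the proof of Lemma \ref{xx}: you sandwich $\hat T_\tau\circ T_\tau(T_s\varphi)\le T_s\varphi$ against the test point $\gamma_{s+\tau}$, using that $\gamma$ is calibrated (Lemma \ref{w}). The subsolution part, however, takes a different route from the one the paper indicates.

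The paper only says that the subsolution property follows from manipulations with Lemma \ref{semigroup}. The natural reading is a purely formal chain that uses both halves of Lemma \ref{semigroup}(ii) and the monotonicity of $T_r$:
\[
\hat T_\tau T_{t+\tau}\varphi=\hat T_\tau T_\tau\bigl(T_t\varphi\bigr)\le T_t\varphi=T_{t-s-\tau}\bigl(T_{s+\tau}\varphi\bigr)\le T_{t-s-\tau}\bigl(T_\tau\hat T_\tau T_{s+\tau}\varphi\bigr)=T_{t-s}\bigl(\hat T_\tau T_{s+\tau}\varphi\bigr).
\]
This chain is exactly $\psi_t'\le T_{t-s}\varphi_s'$, and it is where the hypothesis $t-s\ge\tau$ enters.

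You instead prove the commutation inequality $\hat T_\tau\circ T_r\le T_r\circ\hat T_\tau$ directly. You split a minimizer from $x$ to $z$ at time $\tau$ and then use the triangle inequality for $c$. This needs a bit of geometric input, namely existence of minimizers (Corollary \ref{a}). In exchange, it shows that $t-s\ge\tau$ is superfluous: your argument only uses $\tau\le 1-t$, and only in the equality part.

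One small point: your chain involves $c_\tau$, which is undefined for $\tau=0$, so that case should be set aside. It is trivial, because $\hat T_0=\mathrm{id}$. The subsolution claim then reduces to the semigroup identity $T_t\varphi=T_{t-s}T_s\varphi$, and the equality claim reduces to calibration of $\gamma$.
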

\begin{proof}
    The proof is very similar to the one of Lemma \ref{xx}. The subsolution property follows from easy manipulations of the Lax-Oleinik semigroups using Lemma \ref{semigroup}. The fact that equality holds at all pairs $(\gamma_s,\gamma_t)$ can be proved directly.
\end{proof}

\begin{proof}[Proof of Theorem \ref{thmg}]
   By assumption, the following identity holds for $\Pi$-a.e.\ $\gamma$:
   \begin{align}
        \psi(\gamma_1)-\varphi(\gamma_0)=c(\gamma_0,\gamma_1) \label{eqzj}
   \end{align}
   By the inner regularity of $\Pi$, we may choose a $\sigma$-compact set $A\subseteq \op{CGeo}([0,1],M)$ consisting of future timelike geodesics with $\Pi(A)=1$ and such that \eqref{eqzj} holds for all $\gamma\in A$.

    Let $\gamma\in A$. We claim that there exists $\tau=\tau(\gamma)>0$ such that $\hat T_\tau T_{s+\tau}\varphi$ and $\hat T_\tau T_{t+\tau}\varphi$ are $C^{1,1}_{loc}$ on open neighbourhoods $\Omega_{\gamma,s}$ of $\gamma(s)$ and $\Omega_{\gamma,t}$ of $\gamma(t)$, respectively. Indeed, since $\varphi(\gamma_0)$ is finite due to \eqref{eqzj}, this result follows immediately from Theorems \ref{thmza} and \ref{m} if we can show that
    \[
        T_1\varphi(\gamma_1) = \varphi(\gamma_0) +c_1(\gamma_0,\gamma_1)
    \]
    (here the assumption that $\Pi$ is concentrated on timelike geodesics is crucial to apply both Theorems).

    Indeed, observe that the fact that $(\varphi,\psi)$ is a $c_1$-subsolution implies that $\psi \leq T_1\varphi$, which combines with the definition of $A$ to yield
    \[
        T_1\varphi(\gamma_1) \geq \psi(\gamma_1) \overset{(\gamma\in A)}{=} \varphi(\gamma_0)+c(\gamma_0,\gamma_1) \geq T_1\varphi(\gamma_1),
    \]
    having used the definition of the Lax-Oleinik semigroup in the last inequality. This proves the claim.

    We may assume that $t-s\geq \tau(\gamma)$ and that $\tau(\gamma)\leq 1-t$. Choose locally finite smooth partitions of unity $(\rho_\gamma)_\gamma$ and $(\eta_\gamma)_\gamma$ of 
    \[
        \Omega_s:=\bigcup_{\gamma\in A} \Omega_{\gamma,s} \quad \text{ and } \quad  \Omega_t:=\bigcup_{\gamma\in A} \Omega_{\gamma,t},
    \]
    w.r.t.\ the open coverings $(\Omega_{\gamma,s})_\gamma$ and $(\Omega_{\gamma,t})_\gamma$.
    Define
    \begin{align*}
        \varphi_s(x):=
        \begin{cases}
            (t-s)^{1-p}\sum\limits_{\gamma \in A} \rho_\gamma(x)\, (\hat T_{\tau_\gamma} \circ T_{s+{\tau_\gamma}}\varphi(x)),& \text{ if } x\in \Omega_s,
            \\[5pt]
            \varphi_s(x):=+\infty,&\text{ if } x\notin \Omega_s,
        \end{cases}        
    \end{align*}
    and
    \begin{align*}
        \psi_t(y):=
        \begin{cases}
            (t-s)^{1-p}\sum\limits_{\gamma \in A} \eta_\gamma(y)\, (\hat T_{\tau_\gamma} \circ T_{t+{\tau_\gamma}}\varphi(y)),& \text{ if } y\in \Omega_t,
            \\[5pt]
            \psi_t(y):=-\infty,&\text{ if } y\notin \Omega_t.
        \end{cases}
    \end{align*}
    Clearly, $\mu_s(\Omega_s)=\mu_t(\Omega_t)=1$, and using the Lemma \ref{ll} it is easy to check that $(\varphi_s,\psi_t)$ is $(c_1,\pi_{s,t})$-calibrated. Being locally finite sums of $C^{1,1}_{loc}$ functions, $\varphi_s$ is $C^{1,1}_{loc}$ on $\Omega_s$ and $\psi_t$ is $C^{1,1}_{loc}$ on $\Omega_t$. This proves the first part of the theorem.

    To conclude the proof, suppose that $(\varphi,\psi)$ is optimal in the dual formulation, i.e.\ 
    \begin{align*}
        C(\mu,\nu) = \int \psi(y)\, d\mu_1(y) - \int \varphi(x)\, d\mu_0(x),
    \end{align*}   
    and let us show that $(\varphi_s,\psi_t)$ is optimal as well. Since $(\varphi_s,\psi_t)$ is $(c_1,\pi_{s,t})$-calibrated, it suffices to show that both functions are integrable w.r.t.\ $\mu_s$ and $\mu_t$, respectively, and integrate the equation defining a calibrated pair. Let us prove the integrability of $\varphi_s$, the integrability of $\psi_t$ can be checked analogously.
    
    Clearly, $\varphi_s$ is $\mu_s$-measurable since it is $C^{1,1}_{loc}$ on an open set of full $\mu_s$-measure. Since each $\gamma\in A$ is $Tu$-calibrated by Lemma \ref{w}, we have
    \[
        \varphi_s(\gamma_s)=\varphi(\gamma_0)+c_{s}(\gamma_0,\gamma_s)=\varphi(\gamma_0) +s c_1(\gamma_0,\gamma_1)\quad \text{for $\Pi$-a.e. $\gamma$.}
    \]
   This implies
    \begin{align*}
        \int |\varphi_s(x)|\, d\mu_s(x) 
        =
        \int |\varphi_s(\gamma_s)|\, d\Pi(\gamma)
        &\leq
        \int |\varphi(\gamma(0))| +s\, |c_1(\gamma(0),\gamma(1))|\, d\Pi(\gamma)
        \\[10pt]
        &=
        \int |\varphi(x)|\, d\mu_0(x) - s\, C(\mu_0,\mu_1)<\infty.
    \end{align*}
    Hence, $\varphi_s\in L^1(\mu_s)$.
\end{proof}

\appendix
\renewcommand{\thesection}{\Alph{section}}
\setcounter{section}{0}

\section*{Appendix} 

\section{Some simple proofs}\label{proofs}

\begin{proof}[Proof of Lemma \ref{lt}]
	This follows immediately from H\"older's inequality with exponents $1/p$ and $q=1/(1-p)$. Indeed, for any future causal curve $\gamma:[a,b]\to M$, we have
	\begin{align*}
	\mathbb{L}(\gamma) = - \int_a^b |\dot \gamma_t|_g^p\, dt \geq -(b-a)^{1-p} \ell_g(\gamma)^{p}.
	\end{align*}
	with equality if and only if $|\dot \gamma|_g$ is constant. This easily implies that a maximizing geodesic  is $L$-minimizing. Conversely, let $y\in I^+(x)$ and suppose $\gamma$ is $L$-minimizing (hence future causal). Being $(M,g)$ globally hyperbolic, Theorem \ref{asdfghj} guarantees the existence of a maximizing geodesic $\tilde \gamma:[a,b]\to M$ connecting $x$ to $y$. Then
	\begin{align*}
	\ell_g(\gamma)^p \geq -(b-a)^{-(1-p)} \mathbb{L}(\gamma) \geq -(b-a)^{-(1-p)} \mathbb{L}(\tilde \gamma)= \ell_g(\tilde \gamma)^p \geq \ell_g(\gamma)^p.
	\end{align*}
	Thus, $\gamma$ is maximizing as well, and equality must hold in each of the above steps, implying that $|\dot \gamma|_g$ is constant. Since $y\in I^+(x)$, we must have $|\dot \gamma|_g=cons.\neq 0$. Combining with the fact that $\gamma$ is a pregeodesic by the theorem above, we conclude that $\gamma$ is in fact a geodesic.
\end{proof}

\begin{proof}[Proof of Lemma \ref{lba}]
	For $p\in \op{int}(\C_x^{*})$, observe that 
	\begin{align*}
	pv-L(x,v)\xrightarrow{|v|_h\to \infty } -\infty.
	\end{align*}
	Indeed, if $v\notin \C_x$, we have $pv-L(x,v)=-\infty$. Othwerwise, since $p\in \op{int}(\C_x^{*})$, we can define
	\begin{align*}
	&\alpha:=\sup\{pv\mid v\in \C_x,|v|_{h}=1\}<0, \text{ and }
	\\[10pt]
	&\beta:=\inf\{L(x,v)\mid v\in \C_x,|v|_{h}=1\}>-\infty.
	\end{align*}
	Therefore, on $\C_x\backslash \{0\}$,
	\begin{align*}
	pv-L(x,v)\leq \alpha |v|_{h}-\beta |v|_{h}^p \xrightarrow{|v|_{h}\to \infty}-\infty.
	\end{align*}
	Thus, since continuous functions defined on compact sets attain their supremum, there is $v\in \C_x$ with
	\begin{align}
	H(x,p)=pv-L(x,v). \label{eqo}
	\end{align}
	We claim that $v\notin \partial \C_x$. First $v\neq 0$: If $v=0$, then we have, for any nonzero $w\in \C_x$,
	\begin{align*}
	p(\lambda w)-L(x,\lambda w)
	=
	\lambda pw+(\lambda|w|_g)^p
	> 0=pv-L(x,v)
	\end{align*}
	for sufficiently small $\lambda>0$, contradicting \eqref{eqo}.
	Now suppose $v\in \partial \C_x\backslash \{0\}$. Let $(e_0,...,e_n)$ be a generalized orthonormal frame in $T_{x}M$ with $e_0$ future timelike. Then
	\begin{align*}
	v=\sum_{i=0}^n \lambda_i e_i, \text{ with } \lambda_0>0 \text{ and }\lambda_0^2-\sum_{i=1}^n \lambda_i^2=0.
	\end{align*}
	However, if we define, for small $\ep>0$, 
	\begin{align*}
	v(\ep):=(\lambda_0+\ep)e_0+\sum_{i=1}^n \lambda_i e_i,
	\end{align*}
	then $v(\ep)$ is future causal and
	\begin{align*}
	|v(\ep)|_g^p= \left((\lambda_0+\ep)^2-\sum_{i=1}^n \lambda_i^2\right)^\frac p2 = (2\lambda_0 \ep+\ep^2)^\frac p2
	\geq (2\lambda_0)^\frac p2 \ep^\frac p2.
	\end{align*}
	Since $\lambda_0>0$, we conclude that
	\begin{align*}
	pv(\ep)-L(x,v(\ep))= pv(\ep)+|v(\ep)|_g^p
	\geq pv +\ep pe_0+(2\lambda_0)^\frac p2\ep^\frac p2>pv =pv-L(x,v)
	\end{align*}
	for small $\ep$, meaning that $v$ cannot be optimal in \eqref{eqo}.
	Hence, $v\in \op{int}(\C_x)$.
	
	Since $L$ is smooth on $\op{int}(\C)$, we can differentiate $w\mapsto pw-L(x,w)$ at its maximum point $w=v$ yielding
	\begin{align*}
	p=\frac{\partial L}{\partial v}(x,v).
	\end{align*}
	Therefore, using \eqref{eqk}, we get
	\begin{align*}
	H(x,p)=\frac{\partial L}{\partial v}(x,v)(v)-L(x,v)
	=
	p|v|_g^{p-2} (-|v|_g^2)+|v|_g^p
	=
	(1-p)|v|_g^p.
	\end{align*}
\end{proof}

\section{Semiconvexity}

\begin{lemma}[Criterion for uniform semiconvexity]\label{lema}
    Suppose that $(f_i)_i:U\to \R$ is a family of $C^1$-functions defined on the open subset $U\subseteq M$. Assume that there exists a compact set $K\subseteq T^*M$ such that 
    \[
        \{(x,d_xf_i)\mid x\in U\}\subseteq K.
    \]
    Moreover, let $h$ and $h_*$ be Riemannian metrics on $M$ and $T^*M$, respectively, and suppose that the family of functions
    \[
        df_i:U\to T^*M,\ x\mapsto (x,d_xf_i),
    \]
    is equi-Lipschitz. Then the family $(f_i)$ is both uniformly locally semiconcave and semiconvex.
\end{lemma}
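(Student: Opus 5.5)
The argument is local, so I first fix a point $x_0\in U$ and a chart $(\phi,W)$ around $x_0$ with $W\subseteq U$ precompact and $\phi(W)$ convex, say a coordinate ball. In this chart I write $F_i:=f_i\circ\phi^{-1}$ on $B:=\phi(W)\subseteq\R^n$. Uniform semiconcavity and semiconvexity of a family of $C^1$ functions on a convex set follow once the gradients $DF_i:B\to\R^n$ are equi-Lipschitz. Indeed, if $|DF_i(a)-DF_i(b)|\leq \Lambda|a-b|$ for all $a,b\in B$ and all $i$, then for $a,b\in B$
\[
\bigl|F_i(b)-F_i(a)-\langle DF_i(a),b-a\rangle\bigr| = \Bigl|\int_0^1 \langle DF_i(a+\theta(b-a))-DF_i(a),b-a\rangle\,d\theta\Bigr| \leq \tfrac{\Lambda}{2}|b-a|^2 .
\]
This gives the inequality \eqref{eqzad} in both directions, with $p=DF_i(a)$ and a constant independent of $i$. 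Strictly speaking, \eqref{eqzad} asks for the inequality for all $y\in\R^n$, whereas we only have it on $B$. Since the definition is local (we only need local uniform semiconcavity), I would shrink to a smaller ball and interpret the condition on the domain, as is done throughout the paper.

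The main step is therefore to transfer the hypothesis to the chart. The hypothesis is that $x\mapsto(x,d_xf_i)$ is equi-Lipschitz from $(U,d_h)$ to $(T^*M,d_{h_*})$, with image in the compact set $K$. In coordinates we have $(x,d_xf_i)\mapsto(\phi(x),DF_i(\phi(x)))$ via the cotangent chart $T^*\phi:T^*W\to\R^n\times\R^n$. I would shrink $W$ so that $\phi$ is bi-Lipschitz from $(W,d_h)$ onto $B$. Since $T^*\phi$ is a diffeomorphism, it is bi-Lipschitz on the compact set $K\cap \pi^{*-1}(\overbar W)$, after slightly enlarging $W$ inside a larger chart domain and taking the compact set $K\cap T^*\overbar{W}$; this is the same device already used in the proof of Lemma \ref{pro1}(ii). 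Composing these maps, the maps $a\mapsto(a,DF_i(a))$ are equi-Lipschitz on $B$ in the Euclidean metric, so the $DF_i$ are equi-Lipschitz with some constant $\Lambda$.

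The only delicate point is this Lipschitz comparison between $d_{h_*}$ and the Euclidean metric of the cotangent chart. It holds because any two Riemannian metrics, or a Riemannian and a Euclidean chart metric, are bi-Lipschitz equivalent on compact subsets of a chart domain. Here the compactness of $K$ is used to confine the fibre variable to a compact region. One also has to make sure that the Riemannian distances $d_h$ and $d_{h_*}$, which are global, agree up to constants with the length structure of the chart on small enough sets. I would handle this by taking the sets small, so that minimizing paths stay inside a slightly larger chart, or by using the standard fact that the Riemannian distance and the chart distance are locally bi-Lipschitz. Once $\Lambda$ is obtained, the displayed Taylor estimate finishes the proof.
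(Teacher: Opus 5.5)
Your proposal is correct and follows essentially the same route as the paper. The paper also passes to a chart, uses that $\varphi^{-1}$ and the cotangent chart $T^*\varphi$ restricted to the compact set $K\cap T^*\overline{W}$ are Lipschitz to get equi-Lipschitz coordinate gradients $D(f_i\circ\varphi^{-1})$, and concludes by Taylor's theorem. Your extra care about convexity of the coordinate domain and about comparing $d_h$, $d_{h_*}$ with the chart metrics only makes explicit what the paper leaves to the reader.
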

\begin{proof}
    Fix $x_0\in U$ and let $\varphi:V\subseteq U\to \varphi(V)\subseteq \R^n$ be a smooth chart around $x_0$. For any $W\Subset V$, the mappings
    \[
        \varphi_{|W}^{-1}:\varphi(W)\to W,\quad \text{ and } \quad T^*\varphi:T^*W\cap K\to \R^n\times \R^n,
    \]
    are Lipschitz continuous (here, $T^*\phi$ denotes the cotangent bundle chart). It follows that the functions
    \[  
        \varphi(W)\to \R^n\times \R^n,\ x\mapsto (x,D(f_i\circ \varphi^{-1})(x)) = T^*\varphi \circ df_i\circ \varphi_{|W}^{-1}(x)
    \]
    are equi-Lipschitz. Using Taylor's theorem, this can easily be used to prove that the functions $(f_i\circ \varphi^{-1})$ are uniformly locally semiconcave and semiconvex.
\end{proof}

\section*{Acknowledgements}
I am grateful to Stefan Suhr for suggesting the study of a Lorentzian version of the Lasry–Lions regularization theorem.

\bibliography{OTaR_090625}
\end{document}